\documentclass[10pt]{amsart}

\usepackage{amsthm, amsfonts, amssymb, color}
 \usepackage{mathrsfs}
\usepackage{amsmath}
 \usepackage{amstext, amsxtra}
  \usepackage{txfonts}
 \usepackage[colorlinks, linkcolor=black, citecolor=blue, pagebackref, hypertexnames=false]{hyperref}
\usepackage{graphicx}
\usepackage{overpic}

 \allowdisplaybreaks
\setlength\textheight{44cc} \setlength\textwidth{30cc}
\setlength\topmargin{0in} \setlength\parskip{5pt}

\widowpenalty=10000

\newtheorem{theorem}{Theorem}[section]
\newtheorem{proposition}[theorem]{Proposition}

\newtheorem{lemma}[theorem]{Lemma}
\newtheorem{definition}[theorem]{Definition}

\newtheorem{remark}[theorem]{Remark}

\renewcommand\Re{\operatorname{Re}}
\renewcommand\Im{\operatorname{Im}}

\title[Dynamics of NLS with a potential ]{
Dynamics of the focusing 3D cubic NLS with slowly decaying potential}
\author{Qing Guo, \ Hua Wang \ and Xiaohua Yao}

\address {Qing Guo, College of Science,
 Minzu University of China, Beijing, 100081, P.R. China}
\email{guoqing0117@163.com}
\address{ Hua Wang, School of Mathematics and Statistics and Hubei Province Key Laboratory of Mathematical Physics,
Central China Normal University, Wuhan, 430079, P.R. China}
\email{wanghua\_math@126.com}
\address{Xiaohua Yao, School of Mathematics and Statistics and  Hubei Province Key Laboratory of Mathematical Physics,
 Central China Normal University, Wuhan, 430079, P.R. China}
\email{yaoxiaohua@mail.ccnu.edu.cn }
\date{\today}
\subjclass[2000]{ 35P25; 35Q55; 47J35}
\keywords{Cubic NLS; Focusing; Slowly decaying potential; Scattering; Blow-up.}

\begin{document}

\maketitle
\begin{abstract}
In this paper, we consider a 3d cubic focusing nonlinear schr\"{o}dinger equation (NLS) with slowing decaying potentials.
Adopting the variational method of Ibrahim-Masmoudi-Nakanishi \cite{IMN}, we obtain a condition for scattering. It is
actually sharp in some sense since the solution will blow up if it's false. The proof of blow-up part relies on the
method of Du-Wu-Zhang \cite{DWZ}.
\end{abstract}

\tableofcontents

\section{Introduction}
\setcounter{equation}{0}
In this paper, we consider a 3d cubic focusing NLS with slowly decaying potentials ($\rm{NLS_{k}}$)
\begin{equation}\label{1.1}
\left\{ \begin{aligned}
  i&\partial_{t}u-H_{\alpha}u+|u|^{2}u=0,\;\;(t,x) \in {{\bf{R}}\times{\bf{R}}^{3}}, \\
  u&(0, x)=u_{0}(x)\in H^{1}({\bf{R}}^{3}),
 \end{aligned}\right.
 \end{equation}
where $u: {\bf R}\times {\bf R}^{3}\rightarrow {\bf C}$ is a complex-valued function, $H_{\alpha}=-\Delta+V(x)$ and
$V(x)=\frac{k}{|x|^{\alpha}}$ with $k>0$ and $1<\alpha\leq 2$. Throughout this paper, we use the symbol $V(x)$
instead of $\frac{k}{|x|^{\alpha}}$ since we frequently use the general property of $V$: $V\geq 0$, $x\cdot\nabla V\leq 0$,
$2V+x\cdot V\geq 0$ and $3x\cdot V+x\nabla^{2}V x^{T}\leq 0$. As $\frac{k}{|x|^{\alpha}}>0$ and $\frac{k}{|x|^{\alpha}}
\in L_{loc}^{1}$, $H_{\alpha}$ is defined as a unique self-adjoint operator associated with the non-negative quadratic
form $<(-\Delta+\frac{k}{|x|^{\alpha}})f, f>$ on $C_{0}^{\infty}({\bf R}^{3})$. Moreover, $H_{\alpha}$ is purely absolutely
continuous and has no eigenvalue. Since $k>0$,
the kernel $e^{-tH_{\alpha}}(x, y)$ of $e^{-tH_{\alpha}}$ satisfies
the upper Gaussian estimate \cite{S} i.e., for $\forall t>0$, $\forall x$, $y\in {\bf R}^{3}$,
\begin{align}\label{heatkernel}
0\leq e^{-tH_{\alpha}}(x, y)\leq e^{t\Delta}(x, y)=(4\pi t)^{-\frac{3}{2}}e^{-\frac{|x-y|^{2}}{4t}},
\end{align}
which implies that Hardy inequality \eqref{Hardy}, Mikhlin multiplier theorem and Littlewood-Paley theory (Bernstein inequalities Lemma \ref{Bernstein},
Littlewood-Paley decomposition Lemma \ref{LPdecompostion}
and square function estimates Lemma \ref{square}) associated with $H_{\alpha}$. Hence it follows from Hardy inequality and Stein complex interpolation that the
standard Sobolev norms and the Sobolev norms associated with $H_{\alpha}$ are equivalent (see Lemma \ref{Sobolev}). Recently, Mizutani \cite{M} showed that $e^{-itH_{\alpha}}$ satisfies
global-in-time Strichartz estimates for any admissible pairs. Combining the Sobolev norm equivalence and the Strichartz estimates and following
the same line of the proof of Theorem 2.15 and Remark 2.16 of \cite{KMVZ} yield that ($\rm{NLS_{k}}$) is locally well-posed and scatters in $H^{1}({\bf R}^{3})$.

\begin{theorem} \cite{KMVZ} \label{localwellposedness}
Let $u_{0}\in H^{1}({\bf R}^{3})$. Then the following are true.

(i) There exist $T=T(\|u_{0}\|_{H^{1}})>0$ and a unique solution $u\in C((-T, T), H^{1}({\bf R}^{3}))$ of ($\rm{NLS_{k}}$).

(ii) There exists $\epsilon_{0}>0$ such that if for $0<\epsilon<\epsilon_{0}$,
$$
\|e^{-itH_{\alpha}}u_{0}\|_{L_{t,x}^{5}({\bf R}^{+}\times{\bf R}^{3})}<\epsilon,
$$
then the solution $u$ of ($\rm{NLS_{k}}$) is global in the positive time direction and satisfies
\begin{align*}
\|u\|_{L_{t,x}^{5}({\bf R}^{+}\times{\bf R}^{3})}\lesssim\epsilon.
\end{align*}
 The similar result holds in the negative time direction.

(iii) For any $\phi\in H^{1}({\bf R}^{3})$, then there exist $T>0$ and a solution $u\in C((T, +\infty), H^{1}({\bf R}^{3}))$ of ($\rm{NLS_{k}}$)
such that
$$
\lim_{t\rightarrow+\infty}\|u(t)-e^{-itH_{\alpha}}\phi\|_{H^{1}({\bf R}^{3})}=0.
$$
The similar result holds in the negative time direction.

(iv) If $u:{\bf R}\times{\bf R}^{3}\rightarrow {\bf C}$ is a global solution of ($\rm{NLS_{k}}$) with
\begin{align}\label{scatteringbound}
\|u\|_{L_{t,x}^{5}({\bf R}\times{\bf R}^{3})}<+\infty,
\end{align}
then the solution $u(t)$ scatters in $H^{1}$. That is, there exists $\phi_{\pm}\in H^{1}({\bf R}^{3})$ such that
$$
\lim_{t\rightarrow\pm\infty}\|u(t)-e^{-itH_{\alpha}}\phi_{\pm}\|_{H^{1}({\bf R}^{3})}=0.
$$

\end{theorem}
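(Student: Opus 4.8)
The plan is to transcribe the standard Strichartz/contraction argument for the $\dot H^{1/2}$-critical cubic NLS on $\mathbb{R}^{3}$, with $-\Delta$ replaced by $H_{\alpha}$ throughout. The only facts about $H_{\alpha}$ this uses are already in hand: the global-in-time Strichartz estimates for $e^{-itH_{\alpha}}$ for every admissible pair (Mizutani \cite{M}), and the equivalence $\|\langle H_{\alpha}\rangle^{1/2}f\|_{L^{2}}\sim\|f\|_{H^{1}}$ of the $H_{\alpha}$-adapted Sobolev norms with the usual ones (Lemma \ref{Sobolev}), together with its homogeneous analogue. The one structural difference from the free case is that $\nabla$ does not commute with $e^{-itH_{\alpha}}$; accordingly I would carry a derivative in the form $H_{\alpha}^{1/2}$ through all the linear (Strichartz and Duhamel) estimates, and pass to the ordinary gradient by the norm equivalence only when estimating the cubic term $F(u)=|u|^{2}u$ via the fractional product and chain rules. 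With this dictionary the argument is a faithful copy of the constant-coefficient theory — indeed of Theorem~2.15 and Remark~2.16 of \cite{KMVZ}, which treat the inverse-square potential — so I only indicate the main steps.

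For (i), write the Duhamel formula
\begin{equation*}
 u(t)=e^{-itH_{\alpha}}u_{0}+i\int_{0}^{t}e^{-i(t-s)H_{\alpha}}F(u(s))\,ds ,
\end{equation*}
and run a contraction on a ball in $C([-T,T],H^{1})\cap L^{q}_{t}W^{1,r}_{x}([-T,T]\times\mathbb{R}^{3})$ for an admissible pair $(q,r)$ chosen so that, by H\"older in time and the inhomogeneous Strichartz estimate, the Duhamel term on the short interval is bounded by $T^{\theta}\|u\|^{3}$ for some $\theta>0$ (the fractional product rule being applied to $\nabla F(u)$ after the norm equivalence). Taking $T=T(\|u_{0}\|_{H^{1}})$ small closes the contraction and yields the unique local solution; uniqueness in $C_{t}H^{1}_{x}$ and the blow-up alternative follow in the usual way.

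For (ii) and (iii) one removes the power of $T$ and works directly on $\mathbb{R}^{+}\times\mathbb{R}^{3}$, respectively on $[T,+\infty)$ for $T$ large, simultaneously at the $\dot H^{1/2}$-critical level (controlling the $L^{5}_{t,x}$ norm) and at the $H^{1}$ level. The contraction closes because the relevant free-evolution norm is small — by hypothesis in (ii), and, in (iii), because $\|e^{-itH_{\alpha}}\phi\|_{L^{5}_{t,x}([T,\infty)\times\mathbb{R}^{3})}\to0$ as $T\to\infty$ by Strichartz and density of Schwartz data; the limit in (iii) is $\phi$ by construction of the fixed point at $t=+\infty$. For (iv), given $\|u\|_{L^{5}_{t,x}(\mathbb{R}\times\mathbb{R}^{3})}<\infty$ one partitions $\mathbb{R}$ into finitely many intervals on each of which this norm lies below the small-data threshold, and a bootstrap with the Strichartz estimates upgrades $u$ to finite $L^{q}_{t}W^{1,r}_{x}$ norms for every admissible pair. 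Then $e^{itH_{\alpha}}u(t)=u_{0}+i\int_{0}^{t}e^{isH_{\alpha}}F(u(s))\,ds$ is Cauchy in $H^{1}$ as $t\to+\infty$ — the tail $\int_{t_{1}}^{t_{2}}$ is estimated by the inhomogeneous Strichartz estimate over a window where the nonlinear norm is small — so it converges to some $\phi_{+}\in H^{1}$ with $\|u(t)-e^{-itH_{\alpha}}\phi_{+}\|_{H^{1}}\to0$; the negative-time statement is symmetric.

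The step requiring the most care is the nonlinear estimate: one must verify that the fractional product and chain rules for $F(u)=|u|^{2}u$ combine cleanly with the $H_{\alpha}$-Strichartz estimates, and this is exactly where Lemma \ref{Sobolev} enters, trading $H_{\alpha}^{1/2}$ for $\nabla$ and back with no loss. Everything else transfers verbatim from the potential-free setting.
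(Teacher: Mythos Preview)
Your proposal is correct and mirrors the paper's own treatment: the paper does not give a detailed proof of this theorem but simply observes that, once the global Strichartz estimates of \cite{M} and the Sobolev norm equivalence of Lemma~\ref{Sobolev} are in hand, the argument is identical to Theorem~2.15 and Remark~2.16 of \cite{KMVZ}. You have laid out precisely that reduction, with the correct bookkeeping of carrying $H_{\alpha}^{1/2}$ through the linear estimates and invoking the norm equivalence on the nonlinearity.
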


Moreover, the $H^{1}$ solution $u$ obeys the mass and energy conservation laws:
\begin{align}\label{mass}
M(u)=\displaystyle\int_{{\bf R}^{3}}|u(t, x)|^{2}dx=M(u_{0}),
\end{align}
and
\begin{align}\label{energy}
E(u)=E_{k}(u)=\frac{1}{2}\displaystyle\int_{{\bf R}^{3}}|\nabla u(x)|^{2}dx
+\frac{1}{2}\displaystyle\int_{{\bf R}^{3}}V(x)|u(x)|^{2}dx
-\frac{1}{4}\displaystyle\int_{{\bf R}^{3}}|u(x)|^{4}dx=E(u_{0}).
\end{align}

In the case $k=0$, Holmer-Roudenko \cite{HR} and Duyckaerts-Holmer-Roudenko \cite{DHR} employed the concentration-compactness
approach of Kenig-Merle \cite{KM} to obtain sharp criteria between scattering and blow up for ($\rm{NLS_{0}}$) in terms of
conservation laws (\eqref{mass} and \eqref{energy}) and the ground state $Q$, which is the unique positive radial exponential decaying
solution of the elliptic equation
\begin{align}\label{ellipticequation}
\Delta Q-Q+Q^{3}=0.
\end{align}
Fang-Xie-Canzenave \cite{FXC} and Akahor-Nawa \cite{AN} extended the result in \cite{HR, DHR} to the general power and dimensions. Subsequently,
Killip-Murphy-Visan-Zhang  \cite{KMVZ} established
a corresponding sharp threshold between scattering and blow up for ($\rm{NLS_{k}}$) with $k>-\frac{1}{4}$ and $\alpha=2$. Recently, Miao-Zhang-Zheng \cite{MZZ} used
the interaction Morawetz-type estimates and the equivalence of Sobolev norms to prove all solutions scatter for ($\rm{NLS_{k}}$) with $k>0$, $\alpha=1$ and
$-|u|^{p-1}u$ $(\frac{7}{3}<p<5)$ in place of $|u|^{2}u$ (i.e., nonlinear Schr\"{o}dinger equation with repulsive Coulomb potential in the defocusing).

The goal of this paper is to extend the sharp scattering criterion in \cite{KMVZ} from $\alpha=2$ to $1<\alpha\leq 2$ when $k>0$ in some sense. Obviously, for $1<\alpha<2$,
the equation ($\rm{NLS_{k}}$) doesn't enjoy scaling invariant. Therefore, we cannot apply scaling as indicated in \cite{KMVZ} to get a critical element
( a minimal blow up solution). Hence, we
shall adopt the variational argument based on the work of Ibrahim-Masmoudi-Nakanishi \cite{IMN} to overcome the difficulty. Recently, the same argument have been applied to
the focusing mass-supercritical nonlinear Schr\"{o}dinger equation with repulsive Dirac delta potential on the real line (see \cite{II}).

To state our main result, we introduce some notation now. We define the functional $S_{k}$ as
\begin{align}\label{action}
S_{k}(\varphi):=E(\varphi)+\frac{1}{2}M(\varphi)=\frac{1}{2}\|\varphi\|_{{\mathcal H}_{k}^{1}}-\frac{1}{4}\displaystyle\int_{{\bf R}^{3}}|\varphi(x)|^{4}dx,
\end{align}
where
\begin{align}\label{Sobolev1}
\|\varphi\|_{{\mathcal H}_{k}^{1}}^{2}=\displaystyle\int_{{\bf R}^{3}}|\nabla \varphi(x)|^{2}dx
+\displaystyle\int_{{\bf R}^{3}}V(x)|\varphi(x)|^{2}dx
+\displaystyle\int_{{\bf R}^{3}}|\varphi(x)|^{2}dx,
\end{align}
which is equivalent to $\|\varphi\|_{H^{1}}$ by $k>0$ by Hardy's inequality.
Denote the scaling quantity $\varphi_{\lambda}^{a, b}$ by
\begin{align}\label{scalingquantity}
\varphi_{\lambda}^{a, b}:=e^{a\lambda}\varphi(e^{-b\lambda}x),
\end{align}
where $(a, b)$ satisfies the condition
\begin{align}\label{parameter}
a>0, \;\; b\leq 0,\;\; 2a+b> 0\;\; 2a+3b\geq 0,\;\; (a,b)\neq (0,0).
\end{align}
We define the scaling derivative of $S_{k}(\varphi_{\lambda}^{a, b})$ at $\lambda=0$ by $K_{k}^{a, b}(\varphi)$.
\begin{align}\label{functionalK}
K_{k}^{a, b}(\varphi)&:={\mathcal{L}}^{a,b}S_{k}(\varphi)=\frac{d}{d\lambda}\Big{|}_{\lambda=0}S_{k}(\varphi_{\lambda}^{a, b})\nonumber\\
&=\frac{2a+b}{2}\displaystyle\int_{{\bf R}^{3}}|\nabla \varphi(x)|^{2}dx
+\frac{2a+3b}{2}\displaystyle\int_{{\bf R}^{3}}V|\varphi(x)|^{2}dx
+\frac{b}{2}\displaystyle\int_{{\bf R}^{3}}(x\cdot\nabla V) |\varphi(x)|^{2}dx\\
&\;\;+\frac{2a+3b}{2}\displaystyle\int_{{\bf R}^{3}}|\varphi(x)|^{2}dx
-\frac{4a+3b}{4}\displaystyle\int_{{\bf R}^{3}}|\varphi(x)|^{4}dx\nonumber
\end{align}
In particular, when $(a, b)=(3, -2)$,
\begin{align}\label{functionalP}
P_{k}(\varphi):=K_{k}^{3,-2}(\varphi)=2\displaystyle\int_{{\bf R}^{3}}|\nabla \varphi(x)|^{2}dx
-\displaystyle\int_{{\bf R}^{3}}(x\cdot\nabla V) |\varphi(x)|^{2}dx
-\frac{3}{2}\displaystyle\int_{{\bf R}^{3}}|\varphi(x)|^{4}dx,
\end{align}
which is related with the Virial identity of ($\rm{NLS_{k}}$) \eqref{I''0} with $\phi(x)=|x|^{2}$, and when
$(a, b)=(3, 0)$,
\begin{align}\label{functionalI}
I_{k}(\varphi):=\frac{1}{3}K_{k}^{3,0}(\varphi)=\displaystyle\int_{{\bf R}^{3}}|\nabla \varphi(x)|^{2}dx
+\displaystyle\int_{{\bf R}^{3}}V|\varphi(x)|^{2}dx
+\displaystyle\int_{{\bf R}^{3}}|\varphi(x)|^{2}dx
-\displaystyle\int_{{\bf R}^{3}}|\varphi(x)|^{4}dx.
\end{align}
We note that, to get existence of minimal blow-up solutions, we need to use the functional $I_{k}$ instead of $P_{k}$.
so that we can apply the linear profile decomposition Lemma \ref{linearprofile}.

The sharp threshold quantity $n_{k}$ are determined by the following minimizing problem
\begin{align}\label{threshold}
n_{k}=\inf\{S_{k}(\varphi):\varphi\in H^{1}({\bf R}^{3})\setminus \{0\}, P_{k}(\varphi)=0\}.
\end{align}
When $k=0$, $n_{0}$ is positive and is achieved by $Q$, which is a unique radial solution of \eqref{ellipticequation}
(see \cite{AN}). The sharp criteria between scattering and blow up as mentioned above can be described by $n_{0}$.
We state the results of \cite{HR, DHR, FXC, AN} in terms of $n_{0}$ as follows.

\begin{theorem} \cite{HR, DHR, FXC, AN} \label{scattering0}
Let $u_{0}\in H^{1}({\bf R}^{3})$ satisfy $S_{0}(u_{0})<n_{0}$.

(i) If $P_{0}\geq 0$, then the solution $u$ of ($\rm{NLS_{0}}$) is global and scatters.

(ii) If $P_{0}< 0$ and $u_{0}$ is radial or $xu_{0}\in L^{2}({\bf R}^{3})$, the the solution $u$ of ($\rm{NLS_{0}}$) blows up in
finite time in both time directions.

Furthermore, if $\psi\in H^{1}({\bf R}^{3})$ satisfying $\frac{1}{2}\|\psi\|_{H^{1}}^{2}<n_{0}$, then there exists a
global solution of ($\rm{INLS_{0}}$) that scatters to $\psi$ in the positive time direction.
The analogous statement holds in the negative time direction.
\end{theorem}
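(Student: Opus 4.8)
Since Theorem~\ref{scattering0} collects the results of \cite{HR, DHR, FXC, AN}, the plan is to reprove it along the by-now-standard route: a variational analysis of the minimizing problem \eqref{threshold}, the concentration--compactness and rigidity scheme of Kenig--Merle \cite{KM} for part~(i), and Glassey's virial identity for part~(ii). \textbf{Step 1 (variational structure).} Recall from \cite{AN} (cf.\ the paragraph preceding the theorem) that $n_{0}>0$ and is attained by $Q$; this rests on the Weinstein-type reformulation of \eqref{threshold} in terms of the sharp $3$d cubic Gagliardo--Nirenberg constant. Introduce the sets
\[
\mathcal{K}^{+}=\{\varphi\in H^{1}({\bf R}^{3}):\ S_{0}(\varphi)<n_{0},\ P_{0}(\varphi)\geq 0\},\qquad \mathcal{K}^{-}=\{\varphi\in H^{1}({\bf R}^{3}):\ S_{0}(\varphi)<n_{0},\ P_{0}(\varphi)<0\}.
\]
Because $S_{0}=E_{0}+\tfrac12 M$ is conserved by \eqref{mass}--\eqref{energy}, because $u(t)\neq 0$ whenever $u_{0}\neq 0$, and because $P_{0}(\psi)=0$ with $\psi\neq 0$ forces $S_{0}(\psi)\geq n_{0}$ by the very definition of $n_{0}$, the continuous function $t\mapsto P_{0}(u(t))$ never vanishes on $\{S_{0}<n_{0}\}\setminus\{0\}$ and thus retains the sign of $P_{0}(u_{0})$; hence $\mathcal{K}^{+}$ and $\mathcal{K}^{-}$ are invariant under the flow of $({\rm NLS}_{0})$.

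\textbf{Step 2 (coercivity, and scattering on $\mathcal{K}^{+}$).} On $\{P_{0}\geq 0\}$ one has the identity $\tfrac16\|\nabla\varphi\|_{2}^{2}+\tfrac12\|\varphi\|_{2}^{2}=S_{0}(\varphi)-\tfrac16 P_{0}(\varphi)\leq S_{0}(\varphi)$, so along $\mathcal{K}^{+}$ we get $\sup_{t}\|u(t)\|_{H^{1}}^{2}\lesssim S_{0}(u_{0})$, hence global existence by Theorem~\ref{localwellposedness}(i); combining the $H^{1}$ bound with Gagliardo--Nirenberg upgrades this to a quantitative gap $P_{0}(u(t))\geq c\min\{\|\nabla u(t)\|_{2}^{2},\,n_{0}-S_{0}(u_{0})\}$ on $\mathcal{K}^{+}$, and symmetrically $P_{0}(u(t))\leq -c\,(n_{0}-S_{0}(u_{0}))<0$ uniformly on $\mathcal{K}^{-}$. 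For~(i), suppose scattering fails for some datum in $\mathcal{K}^{+}$. Starting from the small-data theory (Theorem~\ref{localwellposedness}(ii)), a minimal-counterexample/concentration-compactness argument — built on the linear profile decomposition for $e^{it\Delta}$ in $H^{1}$ (cf.\ Lemma~\ref{linearprofile}), the associated nonlinear profiles, and long-time perturbation theory, with Step~2 guaranteeing that all profiles stay in $\mathcal{K}^{+}$ — produces a \emph{critical element}: a nonzero global solution $u_{c}$ with $u_{c}(0)\in\mathcal{K}^{+}$, $\|u_{c}\|_{L^{5}_{t,x}}=\infty$, and precompact orbit $\{u_{c}(t,\cdot-x(t)):t\in{\bf R}\}$ in $H^{1}$ for a suitable $x(t)$. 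The rigidity step then feeds this compactness together with the lower bound $P_{0}(u_{c}(t))\geq\delta>0$ into a centered, spatially truncated version of the virial functional $\int|x|^{2}|u_{c}(t,x)|^{2}\,dx$ (after first controlling the drift, e.g.\ reducing to $|x(t)|=o(t)$) and, integrating over a long time interval, forces $u_{c}\equiv 0$, a contradiction; hence every datum in $\mathcal{K}^{+}$ yields a global, scattering solution, which is~(i).

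\textbf{Step 3 (blow-up on $\mathcal{K}^{-}$, and wave operators).} If $u_{0}\in\mathcal{K}^{-}$ and $xu_{0}\in L^{2}({\bf R}^{3})$, the virial identity for $({\rm NLS}_{0})$, namely $\frac{d^{2}}{dt^{2}}\int|x|^{2}|u(t)|^{2}\,dx=4\,P_{0}(u(t))$ (the case $\phi(x)=|x|^{2}$, $V\equiv 0$ of \eqref{I''0}), is $\leq -4c<0$ by Step~2, so the nonnegative quantity $\int|x|^{2}|u(t)|^{2}\,dx$ would turn negative in finite time, which is impossible; hence the maximal existence time is finite, and likewise backward in time by time reversal. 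For radial $u_{0}$ one replaces $|x|^{2}$ by a radial cutoff $\phi_{R}$ equal to $|x|^{2}$ on $\{|x|\leq R\}$ and absorbs the error terms using the radial (Strauss) Gagliardo--Nirenberg inequality, exactly as in \cite{HR, FXC}. For the wave-operator statement, given $\psi$ with $\tfrac12\|\psi\|_{H^{1}}^{2}<n_{0}$ (the case $\psi=0$ being trivial), Theorem~\ref{localwellposedness}(iii) with $k=0$, $\phi=\psi$ provides $T>0$ and a solution $u$ on $(T,+\infty)$ with $\|u(t)-e^{it\Delta}\psi\|_{H^{1}}\to 0$; since $\|e^{it\Delta}\psi\|_{L^{4}_{x}}\to 0$ as $t\to+\infty$ while $\|e^{it\Delta}\psi\|_{H^{1}}=\|\psi\|_{H^{1}}$, conservation of $S_{0}$ gives $S_{0}(u)=\tfrac12\|\psi\|_{H^{1}}^{2}<n_{0}$, and $P_{0}(u(t))\to 2\|\nabla\psi\|_{2}^{2}>0$, so $u(T)\in\mathcal{K}^{+}$ for $T$ large; by~(i), $u$ is global and scatters, and since $\|e^{it\Delta}(\phi_{+}-\psi)\|_{H^{1}}=\|\phi_{+}-\psi\|_{H^{1}}$ it scatters to $\psi$. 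The negative-time statement is analogous.

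\textbf{Main obstacle.} The genuine work is the scattering argument in Step~2: constructing the critical element needs the full linear/nonlinear profile-decomposition apparatus together with a focusing-adapted stability theory ensuring the profiles do not leave $\mathcal{K}^{+}$, and the rigidity step hinges on a delicate localized virial (Morawetz) estimate using both the precompactness of the critical orbit and the uniform positivity of $P_{0}$ from Step~2. By comparison, the variational analysis of Step~1, the virial blow-up, and the wave-operator construction of Step~3 are largely routine.
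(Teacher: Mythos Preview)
The paper does not supply its own proof of Theorem~\ref{scattering0}: the result is quoted verbatim from \cite{HR, DHR, FXC, AN} and used as a black box (notably inside Lemma~\ref{nonlinearprofile}). Your outline is a faithful sketch of precisely the Kenig--Merle concentration--compactness/rigidity argument carried out in those references, together with the classical Glassey virial blow-up and the standard wave-operator construction, so there is nothing to compare against within this paper; your proposal is correct and matches the cited approach.
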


When $k>0$, we prove that $n_{k}=n_{0}$ and $n_{k}$ is never attained (see Lemma \ref{attain}). For succinctness, we next define two subsets of
$H^{1}({\bf R}^{3})$ as follows:
\begin{align}\label{n+}
{\mathcal{N}}^{+}:=\{\varphi\in H^{1}({\bf R}^{3}): S_{k}(\varphi)<n_{0}, P_{k}(\varphi)\geq 0\}
\end{align}
and
\begin{align}\label{n-}
{\mathcal{N}}^{-}:=\{\varphi\in H^{1}({\bf R}^{3}): S_{k}(\varphi)<n_{0}, P_{k}(\varphi)< 0\}.
\end{align}
Now we state our main result.

\begin{theorem}\label{scattering1}
Let $u$ be the solution of ($\rm{NLS_{k}}$) on $(-T_{min}, T_{max})$, where $(-T_{min}, T_{max})$ is the maximal life-span.

(i) If $u_{0}\in {\mathcal{N}}^{+}$, then $u$ is global well-posedness, $u(t)\in {\mathcal{N}}^{+}$ for any $t\in {\bf R}$ and scatters.

(ii) If $u_{0}\in {\mathcal{N}}^{-}$, then $u(t)\in {\mathcal{N}}^{-}$ for any $t\in (-T_{min}, T_{max})$ and one of the following four statements holds true:

\;\;(a) $T_{max}<+\infty$ and $\lim_{t\uparrow T_{max}}\|\nabla u(t)\|_{L^{2}}=+\infty$.

\;\;(b) $T_{min}<+\infty$ and $\lim_{t\downarrow -T_{min}}\|\nabla u(t)\|_{L^{2}}=+\infty$.

\;\;(c) $T_{max}=+\infty$ and there exists a sequence $\{t_{n}\}_{n=1}^{+\infty}$ such that $t_{n}\rightarrow+\infty$ and $\lim_{t_{n}\uparrow +\infty}\|\nabla u(t)\|_{L^{2}}=+\infty$.

\;\;(d) $T_{min}=+\infty$ and there exists a sequence $\{t_{n}\}_{n=1}^{+\infty}$ such that $t_{n}\rightarrow-\infty$ and $\lim_{t_{n}\downarrow -\infty}\|\nabla u(t)\|_{L^{2}}=+\infty$.
\end{theorem}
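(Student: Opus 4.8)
The plan is to combine the Ibrahim--Masmoudi--Nakanishi variational calculus with a concentration--compactness/rigidity argument for part~(i), and with the Du--Wu--Zhang localized virial for part~(ii). First I record that $\mathcal{N}^{+}$ and $\mathcal{N}^{-}$ are flow-invariant: $S_k=E+\tfrac12 M$ is conserved by \eqref{mass}--\eqref{energy}, the map $t\mapsto P_k(u(t))$ is continuous on the lifespan (the solution lies in $C_tH^1$ and $P_k$ is $H^1$-continuous, using Hardy's inequality for the term $\int(x\cdot\nabla V)|u|^2$), and $\{\varphi\ne0:S_k(\varphi)<n_0,\ P_k(\varphi)=0\}=\varnothing$ because $n_k=n_0$ (Lemma~\ref{attain}); since $u(t)\ne0$ by mass conservation, the sign of $P_k(u(t))$ cannot change. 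For $u_0\in\mathcal{N}^{+}$, combining $P_k(u(t))\ge0$ with $V\ge0$ and $2V+x\cdot\nabla V\ge0$ gives the coercivity bound $E(u(t))\ge\tfrac16\|\nabla u(t)\|_{L^2}^2$, hence $\|\nabla u(t)\|_{L^2}^2\le 6\,S_k(u_0)<6n_0$ uniformly in $t$; with mass conservation this keeps $\|u(t)\|_{H^1}$ bounded, so Theorem~\ref{localwellposedness}(i) iterates to a global solution. For $u_0\in\mathcal{N}^{-}$, if $T_{\max}<\infty$ the blow-up criterion of the energy-subcritical local theory (the existence time depends only on, and decays as a power of, $\|u(t)\|_{H^1}$) forces $\|\nabla u(t)\|_{L^2}\to\infty$ as $t\uparrow T_{\max}$, which is alternative~(a); likewise~(b). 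So for~(ii) it remains to rule out a global solution in $\mathcal{N}^{-}$ with $\sup_{t\in\mathbb R}\|\nabla u(t)\|_{L^2}<\infty$.

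For the scattering in~(i) I argue by contradiction. Small-data theory (Theorem~\ref{localwellposedness}(ii)) together with the above bound shows that $u_0\in\mathcal{N}^{+}$ scatters when $S_k(u_0)$ is small enough; set $m_c:=\inf\{S_k(u_0):u_0\in\mathcal{N}^{+},\ u\text{ does not scatter}\}$ and suppose $m_c<n_0$. Taking non-scattering $u_{0,n}\in\mathcal{N}^{+}$ with $S_k(u_{0,n})\to m_c$ and applying the linear profile decomposition (Lemma~\ref{linearprofile}), each profile is either anchored at the origin (evolving under $(\mathrm{NLS}_k)$) or escapes to spatial infinity, where $V(\cdot+x_n^j)\to0$ so the associated nonlinear profile solves the free focusing cubic NLS; the functionals $M$, $\|\nabla\cdot\|_{L^2}^2$, $\|\cdot\|_{L^4}^4$ and $\int V|\cdot|^2$ decouple asymptotically, hence so do $S_k$, $P_k$ and $I_k$ (the convenient functional here, cf.\ the discussion following \eqref{functionalI}), and the variational lemmas force every profile into the admissible region below level $m_c$. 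If there is more than one profile, or the remainder does not vanish, each nonlinear profile scatters --- the anchored ones by minimality of $m_c$, the escaping ones by Theorem~\ref{scattering0} since $m_c\le n_0$ --- and the long-time perturbation lemma makes $u_n$ scatter for $n$ large, a contradiction. Hence there is a single profile, with vanishing remainder and full action $m_c$; an escaping such profile is excluded by Theorem~\ref{scattering0}, so we obtain a critical solution $u_c\in\mathcal{N}^{+}$, $S_k(u_c)=m_c$, non-scattering in both time directions, and --- rerunning the above along any time sequence --- with $H^1$-precompact orbit $\{u_c(t):t\in\mathbb R\}$.

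To rule out $u_c$ I use a truncated virial. Let $\phi_R(x)=R^2\phi(x/R)$ with $\phi$ smooth, $\phi(x)=|x|^2$ for $|x|\le1$ and $\nabla^2\phi$ bounded, and put $z_R(t)=\int\phi_R|u_c(t)|^2$. Then $|z_R'(t)|\lesssim R\,M(u_c)^{1/2}\|\nabla u_c(t)\|_{L^2}\lesssim R$ uniformly in $t$, while $z_R''(t)=c\,P_k(u_c(t))+\mathcal E_R(t)$ for a constant $c>0$ (the virial identity \eqref{I''0} with weight $|x|^2$, whose scaling derivative is $P_k$, see \eqref{functionalP}; the potential contributions are controlled using $2V+x\cdot\nabla V\ge0$ and $3x\cdot\nabla V+x\nabla^2Vx^T\le0$), and the error $\mathcal E_R(t)$, supported in $|x|\gtrsim R$, tends to $0$ uniformly in $t$ by tightness of the precompact orbit. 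Since $u_c(t)\in\mathcal{N}^{+}$ with $S_k(u_c(t))=m_c<n_0$ and $u_c(t)\ne0$, the variational estimates give $P_k(u_c(t))>0$ for every $t$, and precompactness upgrades this to $\inf_t P_k(u_c(t))=\delta>0$. Hence $z_R''(t)\ge c\delta/2>0$ for $R$ large and all $t$, so $z_R'(T)\to+\infty$, contradicting $|z_R'(T)|\lesssim R$. Therefore $m_c\ge n_0$ and every $u_0\in\mathcal{N}^{+}$ scatters, which proves~(i).

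For~(ii), by the first paragraph we may assume $u$ is global and, to contradict (c)--(d), that $K:=\sup_{t\in\mathbb R}\|\nabla u(t)\|_{L^2}<\infty$. By invariance $u(t)\in\mathcal{N}^{-}$ for all $t$, and the Ibrahim--Masmoudi--Nakanishi variational estimate yields a uniform gap $P_k(u(t))\le -c_0\,(n_0-S_k(u_0))<0$. I then follow \cite{DWZ}: for a cutoff $\chi_R$ as above, $z_R''(t)=c\,P_k(u(t))+\mathcal E_R(t)$, and the crux is to dominate the error $\mathcal E_R(t)$ --- which can no longer be made small by compactness --- using only the no-grow-up bound $\|\nabla u(t)\|_{L^2}\le K$, by choosing $R$ adapted to the solution and iterating in time as in \cite{DWZ}; this forces $z_R'(t)\to-\infty$, contradicting $|z_R'(t)|\lesssim R$. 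Hence one of (a)--(d) holds. I expect the main obstacle to be precisely this last step: transplanting the Du--Wu--Zhang control of the localized-virial error to the present setting without radial symmetry or finite variance, while keeping track of the extra potential terms, whose good signs come from $2V+x\cdot\nabla V\ge0$ and $3x\cdot\nabla V+x\nabla^2Vx^T\le0$; a secondary, more routine technical point is checking that the scaling/variational lemmas of \cite{IMN} carry over to $S_k$, $P_k$ and $I_k$ despite the inhomogeneity of $V=k|x|^{-\alpha}$ for $1<\alpha\le2$.
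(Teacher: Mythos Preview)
Your proposal is correct and follows essentially the same route as the paper: flow invariance via the variational threshold $n_k=n_0$, global existence in $\mathcal N^+$ from coercivity, scattering by the concentration--compactness/rigidity scheme with profile decomposition (using $I_k$ for decoupling) and a localized virial to exclude the precompact critical element, and the Du--Wu--Zhang argument for~(ii). One small correction on the last step: the DWZ mechanism is not a time iteration and the contradiction is not $z_R'\to-\infty$; rather one first proves an exterior-mass bound $\int_{|x|\ge R}|u(t)|^2\le\eta_0+o_R(1)$ valid for $0\le t\le T:=\eta_0 R/C$ via a separate cutoff, feeds this into $I''(t)\le 4P_k(u)+\tilde C\|u\|_{L^2(|x|>R)}$ to get $I''\le\beta_0<0$ on $[0,T]$, and then integrates twice using $I(0)=o_R(1)R^2$, $I'(0)=o_R(1)R$ (since $u_0\in H^1$ is fixed) to obtain $I(T)\le(o_R(1)+\alpha_0)R^2<0$ for $R$ large, contradicting $I\ge0$.
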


Here the blow-up result is proved by the method of Du-Wu-Zhang \cite{DWZ}.

This present paper is organized as follows. We fix notations at the end of Section 1. In Section 2, as preliminaries, we state some our required lemmas, including
Sobolev norm equivalence, Strichartz estimates, stability theory, Littlewood-Paley theory, some limit lemmas between $H_{\alpha}^{n}$ and $H_{\alpha}^{\infty}$,
linear profile decomposition and nonlinear profiles for $|x_{n}|\rightarrow+\infty$. In Section 3, using the variational idea of Ibrahim-Masmoudi-Nakanishi \cite{IMN},
we obtain that if $\psi\in \mathcal{N}^{+}$, $P_{k}(\psi)$ and $I_{k}(\psi)$ have the same sign and $S_{k}(\psi)$ is equivalent to $\|\psi\|_{H^{1}}$ and
if $\psi\in \mathcal{N}^{\pm}$, $P_{k}(\psi)$ has the uniform bounds, which play a vital role to get blow-up and scattering results. In Section 4, using the upper bound
of $P_{k}(\psi)$ for $\psi\in \mathcal{N}^{-}$ and adopting the method of Du-Wu-Zhang \cite{DWZ}, we establish blow-up part of Theorem \ref{scattering1}. Global part of
Theorem \ref{scattering1} can be obtained by the lower bound of $P_{k}(\psi)$ for $\psi\in \mathcal{N}^{+}$ and local well-posedness $i$ of Theorem \ref{localwellposedness}.
In the last section, we show the scattering part of Theorem \ref{scattering1} in two steps. In Step 1, by contradiction, if scattering fails, then a critical element must exist.
In Step 2, we utilize lower bound of $P_{k}(\psi)$ for $\psi\in \mathcal{N}^{+}$ to preclude the critical element. Putting the last two sections together completes the
proof of Theorem \ref{scattering1}.

\textbf{Notations:}:

We fix notations used throughout the paper. In what follows, we write $A\lesssim B$ to signify
that there exists a constant $c$ such that $A\leq cB$, while we denote $A\sim B$ when
$A\lesssim B\lesssim A$. Given a real number $\alpha$, $\alpha-=\alpha-\epsilon$ for $0<\epsilon\ll 1$.

Let $L_{I}^{q}L_{x}^{r}$ be the space
of measurable functions from an interval $I\subset {\bf R}$ to $L_{x}^{r}$ whose $L_{I}^{q}L_{x}^{r}$-
norm $
\|\cdot\|_{L_{I}^{q}L_{x}^{r}}$ is finite, where
\begin{align*}
\|u\|_{L_{I}^{q}L_{x}^{r}}=\Big(\displaystyle\int_{I}\|u(t)\|_{L_{x}^{r}}^{q}dt\Big)^{\frac{1}{r}}.
\end{align*}
When $I={\bf R}$, we may use $L_{t}^{q}L_{x}^{r}$ instead of
$L_{I}^{q}L_{x}^{r}$, respectively. In particular, when $q=r$, we may simply write them as
$L_{t,x}^{q}$, respectively.

Moreover, the Fourier transform on ${\bf R}^{3}$ is defined by
$\hat{f}(\xi)=(2\pi)^{-\frac{3}{2}}\displaystyle\int_{{\bf R}^{3}} e^{-ix\cdot\xi}f(x)dx$.
Define the inhomogeneous Sobolev space $H^{s}({\bf R}^{3})$ and and the homogeneous Sobolev space $\dot{H}^{s}({\bf R}^{3})$, respectively, by norms
$$
\|f\|_{H^{s}({\bf R}^{3})}=\|(1+|\xi|^{2})^{\frac{s}{2}}\hat{f}(\xi)\|_{L^{2}({\bf R}^{3})}=\|(1+\Delta)^{\frac{s}{2}} f\|_{L^{2}({\bf R}^{3})}
$$
and
$$
\|f\|_{\dot{H}^{s}({\bf R}^{3})}=\||\xi|^{s}\hat{f}(\xi)\|_{L^{2}({\bf R}^{3})}=\|\Delta^{\frac{s}{2}} f\|_{L^{2}({\bf R}^{3})}.
$$
Denote the inhomogeneous Sobolev space and homogeneous Sobolev space adapted to $H_{\alpha}$ by ${\mathcal{H}}_{k}^{s, p}({\bf R}^{3})$
and ${\mathcal{\dot H}}_{k}^{s, p}({\bf R}^{3})$, respectively, with norms
$$
\|f\|_{{\mathcal{H}}_{k}^{s, p}({\bf R}^{3})}=\|(1+H_{\alpha})^{\frac{s}{2}} f\|_{L^{p}({\bf R}^{3})}
$$
and
$$
\|f\|_{{\mathcal{\dot H}}_{k}^{s, p}({\bf R}^{3})}=\|H_{\alpha}^{\frac{s}{2}} f\|_{L^{p}({\bf R}^{3})}.
$$
${\mathcal{H}}_{k}^{s}({\bf R}^{3})$
and ${\mathcal{\dot H}}_{k}^{s}({\bf R}^{3})$ are shorthand by ${\mathcal{H}}_{k}^{s, 2}({\bf R}^{3})$
and ${\mathcal{\dot H}}_{k}^{s, 2}({\bf R}^{3})$, respectively.

Given $p\geq 1$, let $p'$ be the conjugate of $p$, that is $\frac{1}{p}+\frac{1}{p'}=1$.

{\bf Acknowledgement}\ \ The first author is financially supported by the
China National Science Foundation (No.11301564, 11771469), the second author is
financially supported by the
China National Science Foundation ( No. 11771165 and
11571131), and the third author is financially supported by the
China National Science Foundation( No. 11771165).



\section{Preliminaries}\label{sec-2}
\setcounter{equation}{0}
As we've mentioned in the introduction, the heat kernel associated with $H_{\alpha}$ satisfies \eqref{heatkernel}, so Mikhlin multiplier theorem holds, which implies
that for $\forall 1<p<\infty$,
\begin{align}\label{Lpbound}
\|f\|_{L^{p}}\lesssim \|(1+H_{\alpha})f\|_{L^{p}}.
\end{align}
And we have the following Hardy type inequality for $H_{\alpha}$ (e.g., see \cite{KMVZS} for $\alpha=2$).
\begin{align}\label{Hardy}
\Big\||x|^{-s}f\Big\|_{L^{p}({\bf R}^{3})}\lesssim \|H_{\alpha}^{\frac{s}{2}}f\|_{L^{p}({\bf R}^{3})}\lesssim \|(1+H_{\alpha})^{\frac{s}{2}}f\|_{L^{p}({\bf R}^{3})},
\end{align}
where $0<s<3$ and $1<p<\frac{3}{s}$.

Using \eqref{Lpbound} and Stein complex interpolation yields the following Sobolev norm equivalence
(see \cite{H} for $V\geq 0$ and $V\in L^{\frac{3}{2}}$ and \cite{ZZ, KMVZS, MZZ} for $\alpha=2$).

\begin{lemma}  \label{Sobolev}
Let $k>0$ and $0<\alpha< 2$, $1<p<\frac{3}{s}$ and $0\leq s\leq 2$, then
\begin{align}\label{ihomogeneous}
\|(1+H_{\alpha})^{\frac{s}{2}}f\|_{L^{p}({\bf R}^{3})}\sim \|(1-\Delta)^{\frac{s}{2}}f\|_{L^{p}({\bf R}^{3})}.
\end{align}
\end{lemma}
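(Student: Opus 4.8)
The plan is to verify \eqref{ihomogeneous} at the two endpoints $s=0$ and $s=2$ and then fill in $0<s<2$ by Stein complex interpolation, keeping $p$ in the admissible range $1<p<3/s$ fixed throughout the interpolation step. The case $s=0$ is trivial, both sides equalling $\|f\|_{L^p}$. For $s=2$ (so $1<p<3/2$) I would use $1+H_\alpha=(1-\Delta)+V$: since for $f$ in a suitable dense class $(1+H_\alpha)f=(1-\Delta)f+Vf$, it suffices to bound the potential term $\|Vf\|_{L^p}=k\,\big\||x|^{-\alpha}f\big\|_{L^p}$ both by $\|(1+H_\alpha)f\|_{L^p}$ and by $\|(1-\Delta)f\|_{L^p}$. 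For the first, $\big\||x|^{-\alpha}f\big\|_{L^p}\lesssim\big\|H_\alpha^{\alpha/2}f\big\|_{L^p}\lesssim\|(1+H_\alpha)f\|_{L^p}$, where the first inequality is the Hardy inequality \eqref{Hardy} with $s=\alpha$ (valid since $0<\alpha<2<3$ and $p<3/2<3/\alpha$) and the second is the Mikhlin multiplier theorem for $H_\alpha$ applied to the bounded symbol $\lambda\mapsto\lambda^{\alpha/2}(1+\lambda)^{-1}$, available thanks to the Gaussian heat-kernel bound \eqref{heatkernel} and bounded precisely because $\alpha<2$. For the second, $\big\||x|^{-\alpha}f\big\|_{L^p}\lesssim\big\|(-\Delta)^{\alpha/2}f\big\|_{L^p}\lesssim\|(1-\Delta)f\|_{L^p}$ by the classical Hardy inequality and the classical Mikhlin theorem with symbol $\xi\mapsto|\xi|^{\alpha}(1+|\xi|^{2})^{-1}$. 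Feeding these into the triangle inequalities $\|(1+H_\alpha)f\|_{L^p}\le\|(1-\Delta)f\|_{L^p}+\|Vf\|_{L^p}$ and $\|(1-\Delta)f\|_{L^p}\le\|(1+H_\alpha)f\|_{L^p}+\|Vf\|_{L^p}$ proves \eqref{ihomogeneous} for $s=2$; equivalently, $(1+H_\alpha)(1-\Delta)^{-1}$ and $(1-\Delta)(1+H_\alpha)^{-1}$ are bounded on $L^p$ for $1<p<3/2$.

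For $0<s<2$ I would apply Stein complex interpolation on the strip $\{0\le\Re z\le1\}$ to the analytic families $T_z:=(1+H_\alpha)^{z}(1-\Delta)^{-z}$ and $S_z:=(1-\Delta)^{z}(1+H_\alpha)^{-z}$, made into admissible families (with operator norms of at most polynomial growth in $\Im z$) by the usual heat-semigroup regularization. On $\Re z=0$ the operators $T_{iy}$ and $S_{iy}$ are products of purely imaginary powers of $1+H_\alpha$ and $1-\Delta$, hence bounded on every $L^{p_0}$, $1<p_0<\infty$, with norm $\lesssim(1+|y|)^{N}$, the boundedness of imaginary powers again being a consequence of \eqref{heatkernel} via the Mikhlin theorem. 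On $\Re z=1$, writing $T_{1+iy}=(1+H_\alpha)^{iy}\big[(1+H_\alpha)(1-\Delta)^{-1}\big](1-\Delta)^{-iy}$ and similarly for $S_{1+iy}$, the $s=2$ step gives that $T_{1+iy},S_{1+iy}$ are bounded on every $L^{p_1}$, $1<p_1<3/2$, again with polynomial growth. Stein interpolation with $\theta=s/2$ then makes $T_{s/2},S_{s/2}$ bounded on $L^p$ whenever $1/p=(1-s/2)/p_0+(s/2)/p_1$ for some admissible $p_0,p_1$; since $1/p_0$ is free in $(0,1)$ and $1/p_1$ free in $(2/3,1)$, the set of attainable $1/p$ is exactly $(s/3,1)$, i.e. $p$ runs over all of $(1,3/s)$. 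Applying $T_{s/2}$ to $(1-\Delta)^{s/2}f$ and $S_{s/2}$ to $(1+H_\alpha)^{s/2}f$ then gives the two inequalities of \eqref{ihomogeneous} for $0<s<2$, which together with the endpoints completes the proof.

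I expect the main obstacle to be the bookkeeping of the Stein interpolation: checking that $T_z,S_z$ — which involve the unbounded operators $(1\pm\Delta)^{s/2}$ and $(1+H_\alpha)^{s/2}$ — can be organized into genuinely admissible analytic families (e.g. by inserting regularizing factors $e^{-\varepsilon H_\alpha}$ and $e^{\varepsilon\Delta}$, proving the bounds uniformly in $\varepsilon$, and letting $\varepsilon\to0$ using that these semigroups are uniformly bounded and strongly convergent on $L^p$ by \eqref{heatkernel}), and establishing the polynomial-in-$\Im z$ bounds for the imaginary powers $(1+H_\alpha)^{iy}$ on $L^p$ — again via the Mikhlin–Hörmander spectral multiplier theorem that \eqref{heatkernel} provides. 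The hypothesis $\alpha<2$ is used only to keep $V$ of order strictly below $1-\Delta$, which is what makes the symbols above bounded and leaves room in the constraint $p<3/\alpha$.
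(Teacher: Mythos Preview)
Your proposal is correct and follows the same overall architecture as the paper: both prove the endpoint $s=2$ by writing $(1+H_\alpha)-(1-\Delta)=V$ and controlling $\|Vf\|_{L^p}$ from both sides, then fill in $0<s<2$ by Stein complex interpolation together with the $L^p$-boundedness of the imaginary powers $(1+H_\alpha)^{iy}$ (the paper cites Sikora--Wright for this; you invoke the Mikhlin--H\"ormander spectral multiplier theorem; both rest on the Gaussian heat-kernel bound \eqref{heatkernel}).

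The only substantive difference is in how the $s=2$ step is executed. The paper splits $V=\chi V+(1-\chi)V$ with a cutoff at the origin: the far piece is trivially $\lesssim\|f\|_{L^p}$, and the near piece is handled by H\"older together with, in one direction, the classical Sobolev embedding $\|f\|_{L^{3p/(3-2p)}}\lesssim\|(1-\Delta)f\|_{L^p}$, and in the other, the Hardy--Littlewood--Sobolev-type bound $\|(1+H_\alpha)^{-1}g\|_{L^{3p/(3-2p)}}\lesssim\|g\|_{L^p}$ obtained from the pointwise kernel estimate $|(1+H_\alpha)^{-1}(x,y)|\lesssim|x-y|^{-1}$. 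You instead absorb $\|Vf\|_{L^p}=k\||x|^{-\alpha}f\|_{L^p}$ in one stroke via the Hardy inequality at exponent $\alpha$ (classical, respectively \eqref{Hardy}) followed by the multiplier bound for the symbol $\lambda\mapsto\lambda^{\alpha/2}(1+\lambda)^{-1}$. Your route is cleaner and avoids the cutoff, at the cost of invoking the fractional Hardy inequality for $H_\alpha$ and the spectral multiplier theorem for a non-integer power; the paper's route is more hands-on but only needs the integral representation of $(1+H_\alpha)^{-1}$ via the heat semigroup and the classical Sobolev embedding.
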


\begin{proof} As the heat kernel associated with $H_{\alpha}$ satisfies \eqref{heatkernel}, the kernel of Riesz potentials $(1+H_{\alpha})^{-\frac{s}{2}}$
$$
(1+H_{\alpha})^{-\frac{s}{2}}(x, y)=\frac{1}{\Gamma(\frac{s}{2})}\displaystyle\int_{0}^{+\infty}e^{-t(1+H_{\alpha})}(x, y)t^{\frac{s}{2}-1}dt
$$
satisfies
$$
|(1+H_{\alpha})^{-\frac{s}{2}}(x, y)|\lesssim |x-y|^{s-3},
$$
which, by Hardy-Littlewood-Sobolev inequality, implies that
\begin{align}\label{HLS}
\|(1+H_{\alpha})^{-\frac{s}{2}}f\|_{L^{\frac{3p}{3-ps}}}\lesssim \|f\|_{L^{p}}.
\end{align}

It suffices to prove that \eqref{ihomogeneous} with $s=2$ and $1<p<\frac{3}{2}$ holds. Indeed, if it is true, then \eqref{ihomogeneous}
follows from Stein complex interpolation and the $L^{p}$-boundedness of $(1+H_{\alpha})^{iy}$ with $\forall y\in {\bf R}$ and $1<p<+\infty$
(which can be obtained by \eqref{heatkernel} and Sikora-Wright \cite{SW}).

Let $\chi(x)$ be a smooth compact supported function such that
$\chi(x)=1$ for $|x|\leq 1$ and $\chi(x)=0$ for $|x|\geq 2$. On one hand, using H\"{o}lder inequality and Sobolev embedding yields that
\begin{align*}
\|(1+H_{\alpha})f\|_{L^{p}}&\leq \|(1-\Delta)f\|_{L^{p}}+k\Big\|\frac{1}{|x|^{\alpha}}f\Big\|_{L^{p}}\nonumber\\
&\leq \|(1-\Delta)f\|_{L^{p}}+k\Big\|\frac{1}{|x|^{\alpha}}\chi f\Big\|_{L^{p}}+k\Big\|\frac{1}{|x|^{\alpha}}(1-\chi) f\Big\|_{L^{p}}\nonumber\\
&\lesssim \|(1-\Delta)f\|_{L^{p}}+\Big\||x|^{-\alpha}\chi\Big\|_{L^{\frac{3}{2}}}\|f\|_{L^\frac{3p}{3-2p}}+\|f\|_{L^{p}}\nonumber\\
&\lesssim \|(1-\Delta)f\|_{L^{p}}.
\end{align*}
On the other hand, using H\"{o}lder inequality, \eqref{Lpbound} and \eqref{HLS} with $s=2$ gives that
\begin{align*}
\|(1-\Delta)f\|_{L^{p}}&\leq \|(1+H_{\alpha})f\|_{L^{p}}+k\Big\|\frac{1}{|x|^{\alpha}}f\Big\|_{L^{p}}\nonumber\\
&\leq \|(1+H_{\alpha})f\|_{L^{p}}+k\Big\|\frac{1}{|x|^{\alpha}}\chi f\Big\|_{L^{p}}+k\Big\|\frac{1}{|x|^{\alpha}}(1-\chi) f\Big\|_{L^{p}}\nonumber\\
&\lesssim \|(1+H_{\alpha})f\|_{L^{p}}+\Big\||x|^{-\alpha}\chi\Big\|_{L^{\frac{3}{2}}}\|f\|_{L^\frac{3p}{3-2p}}+\|f\|_{L^{p}}\nonumber\\
&\lesssim \|(1+H_{\alpha})f\|_{L^{p}}.
\end{align*}
Thus, we get \eqref{ihomogeneous} with $s=2$ and $1<p<\frac{3}{2}$ and then conclude the proof.

\end{proof}

Recently, Mizutani \cite{M} proved that the solution to free Schr\"{o}dinger equations with a class of slowly decaying repulsive
potentials including $k|x|^{-\alpha}$ with $k>0$ and $0<\alpha<2$ satisfies global-in-time Strichartz estimates for any admissible pairs.
Besides, it is well known that Strichartz estimates for free Schr\"{o}dinger equation with inverse-square potentials were established
by Burq-Planchon-Stalker-Tahvildar-Zadeh \cite{BPSTZ}. Hence, we have the following global-in-time Strichartz estimate.

\begin{lemma} \cite{BPSTZ, M} \label{Strichartz}
Let $k>0$ and $0<\alpha\leq 2$. Then the solution $u$ of $iu_{t}-H_{\alpha}u=F$ with initial data $u_{0}$ obeys
\begin{align}\label{Strichartzestimates}
\|u\|_{L_{t}^{q}L_{x}^{r}}\lesssim \|u_{0}\|_{L_{x}^{2}}+\|F\|_{L_{t}^{\tilde{q}'}L_{x}^{\tilde{r}'}}
\end{align}
for any $2\leq q, \tilde{q}\leq \infty$ with $\frac{2}{q}+\frac{3}{r}=\frac{2}{\tilde{q}}+\frac{3}{\tilde{r}}=\frac{3}{2}$.
\end{lemma}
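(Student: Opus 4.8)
The plan is to treat the two ranges $\alpha=2$ and $1<\alpha<2$ separately, quoting in each case the \emph{homogeneous} global-in-time Strichartz estimate from the literature, and then to upgrade to the stated inhomogeneous form by a routine $TT^\ast$/Christ--Kiselev argument. First I would record the reduction. Since $V\geq 0$ and $k|x|^{-\alpha}\in L^1_{\mathrm{loc}}$, the operator $H_\alpha$ is self-adjoint and nonnegative, so $\{e^{-itH_\alpha}\}$ is a strongly continuous unitary group on $L^2({\bf R}^3)$, in particular $\|e^{-itH_\alpha}u_0\|_{L^2_x}=\|u_0\|_{L^2_x}$. Granting the free-term bound $\|e^{-itH_\alpha}u_0\|_{L^q_tL^r_x}\lesssim\|u_0\|_{L^2_x}$ for every admissible pair ($2\le q\le\infty$, $\tfrac2q+\tfrac3r=\tfrac32$), composition with its adjoint gives $\big\|\int_{\bf R}e^{-i(t-s)H_\alpha}F(s)\,ds\big\|_{L^q_tL^r_x}\lesssim\|F\|_{L^{\tilde q'}_tL^{\tilde r'}_x}$, and the Duhamel truncation $\int_0^t$ is recovered via the Christ--Kiselev lemma whenever $q>\tilde q'$. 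The remaining double-endpoint case $(q,r)=(\tilde q,\tilde r)=(2,6)$ does not follow formally from Christ--Kiselev, but it is contained directly in the statements of \cite{BPSTZ} and \cite{M} (or can be reproved by the bilinear argument of Keel--Tao applied to $(F,G)\mapsto\iint_{s<t}\langle e^{-i(t-s)H_\alpha}F(s),G(t)\rangle\,ds\,dt$, using the local decay estimates those references establish).

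For $\alpha=2$ the operator is $H_2=-\Delta+k|x|^{-2}$ with $k>0>-\tfrac14$; it is scaling-invariant and nonnegative, so there is no zero-energy resonance obstruction, and the global-in-time homogeneous and inhomogeneous Strichartz estimates in the full admissible range, endpoint included, are exactly the content of Burq--Planchon--Stalker--Tahvildar-Zadeh \cite{BPSTZ}. Here there is essentially nothing to prove beyond citing their theorem with the coupling constant $k>0$.

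For $1<\alpha<2$ I would check that $V=k|x|^{-\alpha}$ falls in the class of slowly decaying repulsive potentials handled by Mizutani \cite{M}: it is nonnegative, smooth away from the origin, has a \emph{subcritical} singularity at $0$ (since $|x|^{-\alpha}\in L^{3/2}_{\mathrm{loc}}({\bf R}^3)$ for $\alpha<2$), and is repulsive, $x\cdot\nabla V=-\alpha k|x|^{-\alpha}=-\alpha V\leq 0$, with long-range decay of order $\alpha$ at infinity. Mizutani's theorem then supplies the global-in-time homogeneous Strichartz estimates for $e^{-itH_\alpha}$ for all admissible pairs; feeding this, together with Step~1, into the reduction above produces \eqref{Strichartzestimates}.

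The genuinely hard input is Step~3, i.e.\ the result of \cite{M}: for $1<\alpha\le 2$ the decay $|x|^{-\alpha}$ is (for $\alpha$ near $1$) essentially at or below the short-range threshold, so one cannot treat $V$ perturbatively off the free Schr\"odinger flow by a direct Duhamel iteration; instead one exploits the repulsivity $x\cdot\nabla V\le 0$ to rule out trapped classical trajectories and to obtain a global Mourre estimate, yielding uniform resolvent and local smoothing bounds valid both at high energy and down to the bottom of the spectrum, which are then converted to Strichartz estimates. The endpoint $r=6$ requires the additional care mentioned above. Since the excerpt allows us to invoke \cite{BPSTZ,M}, however, from our standpoint this lemma reduces to the bookkeeping in Steps~1--3.
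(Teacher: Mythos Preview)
Your proposal is correct and takes essentially the same approach as the paper: the lemma is stated there without proof, merely preceded by a sentence attributing the $\alpha=2$ case to \cite{BPSTZ} and the slowly decaying range to Mizutani \cite{M}. Your write-up is in fact more detailed than the paper's treatment, spelling out the standard $TT^\ast$/Christ--Kiselev passage from homogeneous to inhomogeneous estimates and verifying that $k|x|^{-\alpha}$ satisfies Mizutani's hypotheses; the only minor discrepancy is that the lemma as stated covers $0<\alpha\leq 2$ while you phrase Step~3 for $1<\alpha<2$, but Mizutani's result (as the paper itself notes) applies for all $0<\alpha<2$.
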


Once we have Strichartz estimates Lemma \ref{Strichartz} and Sobolev norm equivalence Lemma \ref{Sobolev}, the local well-posedness
Theorem \ref{localwellposedness} and stability result Lemma \ref{stability} for ($\rm{NLS_{k}}$) can be obtained by the
same proofs as in Theorem 2.15 and Theorem 2.17 of \cite{KMVZ}, respectively.

\begin{lemma} \cite{KMVZ} \label{stability}
For $k>0$ and $0<\alpha\leq 2$. Let $\tilde{u}$ be the solution of
\begin{equation}\label{perturbation}
\left\{ \begin{aligned}
  i&\tilde{u}_{t}-H_{\alpha}\tilde{u}+|\tilde{u}|^{2}\tilde{u}=e,\;\;(t,x) \in {I\times{\bf{R}}^{3}}, \\
  \tilde{u}&(0, x)=\tilde{u}_{0}(x)\in H^{1}({\bf{R}}^{3}),
 \end{aligned}\right.
 \end{equation}
for some 'error' $e$. Given $u_{0}\in H^{1}({\bf{R}}^{3})$ and assume
\begin{align}\label{twoupperbound}
\|u_{0}\|_{H^{1}}+\|\tilde{u}_{0}\|_{H^{1}}\leq A \;\;\text{and}\;\; \|\tilde{u}\|_{L_{t,x}^{5}}\leq M
\end{align}
for some $A$, $M>0$. For any given $\frac{1}{2}\leq s<1$, there exists $\epsilon_{0}=\epsilon_{0}(A, M)$ such that if $0<\epsilon<\epsilon_{0}$ and
\begin{align}\label{error}
\|u_{0}-\tilde{u}_{0}\|_{H^{s}}+\Big\|(1-\Delta)^{\frac{s}{2}}e\Big\|_{N(I)}<\epsilon,
\end{align}
where
$$
N(I):= L_{I,x}^{\frac{10}{7}}+L_{I}^{\frac{5}{3}}L_{x}^{\frac{30}{23}}+L_{I}^{1}L_{x}^{2},
$$
then there exists a solution $u$ of ($\rm{NLS_{k}}$) such that
\begin{align}\label{difference}
\|u-\tilde{u}\|_{S_{\alpha}^{s}(I)}\lesssim_{A, M}\epsilon,
\end{align}
\begin{align}\label{oneupperbound}
\|u\|_{S_{\alpha}^{1}(I)}\lesssim_{A, M}1,
\end{align}
where
$$
S_{\alpha}^{s}(I)=L_{I}^{2}{\mathcal{H}}_{\alpha}^{s,6}\cap L_{I}^{\infty}{\mathcal{H}}_{\alpha}^{s}.
$$
\end{lemma}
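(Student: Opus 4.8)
The plan is to run the standard long-time perturbation argument, exactly as in the proof of Theorem~2.17 of \cite{KMVZ}; the only non-Euclidean inputs are the global-in-time Strichartz estimates of Lemma~\ref{Strichartz} and the Sobolev norm equivalence of Lemma~\ref{Sobolev}, and together these reduce every estimate below to its flat-space analogue. As a preliminary step I would first upgrade the hypothesis $\|\tilde u\|_{L_{t,x}^{5}}\le M$ to the bound $\|\tilde u\|_{S_{\alpha}^{1}(I)}\lesssim_{A,M}1$. To see this, partition $I$ into $J=J(A,M)$ consecutive subintervals $I_j=[t_j,t_{j+1}]$ on each of which $\|\tilde u\|_{L_{t,x}^{5}(I_j)}\le\eta$ for a small absolute constant $\eta$; writing Duhamel's formula for $\tilde u$ on $I_j$, applying Lemma~\ref{Strichartz}, Lemma~\ref{Sobolev} and the fractional product rule to the cubic term $|\tilde u|^{2}\tilde u$, and absorbing the resulting factor $\eta^{2}$, one controls $\|\tilde u\|_{S_{\alpha}^{1}(I_j)}$ by $\|\tilde u(t_j)\|_{H^{1}}+\|(1-\Delta)^{1/2}e\|_{N(I_j)}$; chaining over the finitely many $j$ yields the claim, and in particular $\|\tilde u\|_{L_t^{\infty}H^{1}}\lesssim_{A,M}1$.

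Now set $w:=u-\tilde u$, which solves $iw_t-H_\alpha w+\big(|u|^{2}u-|\tilde u|^{2}\tilde u\big)=-e$ with $w(0)=u_0-\tilde u_0$. Refining the partition above so that in addition $\|\tilde u\|_{S_{\alpha}^{1}(I_j)}\le\eta$ on each piece, Duhamel's formula together with Lemmas~\ref{Strichartz} and~\ref{Sobolev} gives
$$\|w\|_{S_{\alpha}^{s}(I_j)}\lesssim\|w(t_j)\|_{H^{s}}+\big\|(1-\Delta)^{s/2}\big(|u|^{2}u-|\tilde u|^{2}\tilde u\big)\big\|_{N(I_j)}+\big\|(1-\Delta)^{s/2}e\big\|_{N(I_j)}.$$
Using the pointwise bound $\big||u|^{2}u-|\tilde u|^{2}\tilde u\big|\lesssim\big(|u|^{2}+|\tilde u|^{2}\big)|w|$, the fractional Leibniz rule, and H\"older in the Strichartz exponents to land in the three summands of $N(I_j)$, the middle term is bounded by $\big(\|w\|_{S_{\alpha}^{s}(I_j)}+\|\tilde u\|_{S_{\alpha}^{s}(I_j)}\big)^{2}\|w\|_{S_{\alpha}^{s}(I_j)}\lesssim\big(\eta^{2}+\|w\|_{S_{\alpha}^{s}(I_j)}^{2}\big)\|w\|_{S_{\alpha}^{s}(I_j)}$, so for $\eta$ small a continuity/bootstrap argument on $I_j$ yields $\|w\|_{S_{\alpha}^{s}(I_j)}\lesssim\|w(t_j)\|_{H^{s}}+\epsilon$. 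Iterating over the $J=J(A,M)$ subintervals, with $\|w(t_{j+1})\|_{H^{s}}\le\|w\|_{S_{\alpha}^{s}(I_j)}$ feeding the next step, produces a geometric growth of the constants and hence $\|w\|_{S_{\alpha}^{s}(I)}\lesssim_{A,M}\epsilon$, which is \eqref{difference}. For \eqref{oneupperbound} one repeats the subinterval analysis at regularity $1$, now choosing the $I_j$ so that both $\|\tilde u\|_{S_{\alpha}^{1}(I_j)}\le\eta$ and $\|w\|_{S_{\alpha}^{s}(I_j)}\le\eta$ (the latter permissible by \eqref{difference}); then the nonlinear difference always carries a small factor coming from either $\tilde u$ or $w$ measured in a norm already known to be small, so $\|w\|_{S_{\alpha}^{1}(I_j)}\lesssim\|w(t_j)\|_{H^{1}}+\epsilon$, and since $\|w(0)\|_{H^{1}}\le\|u_0\|_{H^{1}}+\|\tilde u_0\|_{H^{1}}\le A$, chaining gives $\|w\|_{S_{\alpha}^{1}(I)}\lesssim_{A,M}1$, whence $\|u\|_{S_{\alpha}^{1}(I)}\le\|w\|_{S_{\alpha}^{1}(I)}+\|\tilde u\|_{S_{\alpha}^{1}(I)}\lesssim_{A,M}1$.

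I expect the main obstacle to be the bookkeeping rather than anything conceptual: one must select the Strichartz pairs in Lemma~\ref{Strichartz} so that, after the fractional Leibniz rule, the cubic term and the error can be distributed into the three summands $L_{I,x}^{10/7}$, $L_{I}^{5/3}L_{x}^{30/23}$ and $L_{I}^{1}L_{x}^{2}$ of $N(I)$, and one must track how the number of subintervals and the accumulated implicit constants depend on $A$ and $M$ so that all final bounds depend only on $(A,M)$. None of this differs from \cite{KMVZ} once Lemmas~\ref{Strichartz} and~\ref{Sobolev} are in hand --- which is precisely why \cite{KMVZ} states the result and refers to the flat-case proof rather than reproducing it.
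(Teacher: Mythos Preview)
Your proposal is correct and matches the paper's approach exactly: the paper does not give any detailed proof of this lemma but simply remarks that, once the Strichartz estimates (Lemma~\ref{Strichartz}) and the Sobolev norm equivalence (Lemma~\ref{Sobolev}) are available, the stability result follows by the same argument as Theorem~2.17 of \cite{KMVZ}. Your write-up is precisely a sketch of that standard long-time perturbation argument, with the correct identification of the two non-Euclidean inputs.
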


Since Mikhlin multiplier theorem for $H_{\alpha}$ holds, naturally, we have the Littlewood-Paley theory associated with
$H_{\alpha}$ (see \cite{KMVZS} for $\alpha=2$). We first give the definition of Littlewood-Paley projection via the heat kernel as
follows: For $N\in 2^{\bf{Z}}$,
\begin{align}\label{projection}
P_{N}:=e^{-\frac{1}{N^{2}}H_{\alpha}}-e^{-\frac{4}{N^{2}}H_{\alpha}}.
\end{align}
We next state Littlewood-Paley decomposition, square function estimate and Bernstein estimates (see \cite{KMVZS} for $\alpha=2$).

\begin{lemma}\label{LPdecompostion}
Let $1<p<\infty$. If $k>0$ and $0<\alpha\leq 2$, then
\begin{align}\label{decompostioneq}
f=\sum_{N\in 2^{{\bf{Z}}}}P_{N}f
\end{align}
as elements of $L^{p}({\bf R}^{3})$. In particular, the sum converges in $L^{p}({\bf R}^{3})$.
\end{lemma}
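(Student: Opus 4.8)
The plan is to establish the Littlewood-Paley decomposition \eqref{decompostioneq} by exploiting that the heat kernel $e^{-tH_\alpha}(x,y)$ satisfies the Gaussian upper bound \eqref{heatkernel}, so that the general machinery of Littlewood-Paley theory for operators with Gaussian heat kernel bounds (as developed for the inverse-square case in \cite{KMVZS}) applies verbatim. First I would observe that the telescoping identity
\begin{align*}
\sum_{N_0 \le N \le N_1, \, N \in 2^{\mathbb Z}} P_N f = e^{-\frac{1}{N_1^2}H_\alpha} f - e^{-\frac{4}{N_0^2}H_\alpha} f
\end{align*}
holds, since the sum of $P_N = e^{-\frac{1}{N^2}H_\alpha} - e^{-\frac{4}{N^2}H_\alpha}$ over dyadic $N$ collapses. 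Thus it suffices to show that as $N_1 \to \infty$ the first term converges to $f$ in $L^p$, and as $N_0 \to 0$ the second term converges to $0$ in $L^p$.

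For the convergence $e^{-sH_\alpha}f \to f$ in $L^p$ as $s \to 0^+$: by \eqref{heatkernel} the semigroup $e^{-sH_\alpha}$ is uniformly bounded on $L^p$ (in fact a contraction, being dominated by $e^{s\Delta}$), so by a standard density argument it is enough to verify the convergence on a dense class, say $C_0^\infty(\mathbb R^3)$ or the range of $(1+H_\alpha)^{-1}$. On such nice functions $g$ one writes $e^{-sH_\alpha}g - g = -\int_0^s H_\alpha e^{-\tau H_\alpha} g \, d\tau$ and estimates using the analyticity of the semigroup together with the $L^p$ bound on $(1+H_\alpha)g$ from \eqref{Lpbound}; alternatively one invokes the strong continuity of the semigroup on $L^p$, which follows from the Gaussian bound. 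For the other end, $e^{-\frac{4}{N_0^2}H_\alpha}f \to 0$ in $L^p$ as $N_0 \to 0$ (i.e. as the time parameter $t = 4/N_0^2 \to \infty$): here I would again dominate by the free heat semigroup, $|e^{-tH_\alpha}f| \le e^{t\Delta}|f|$ pointwise by \eqref{heatkernel}, and use that $\|e^{t\Delta}|f|\|_{L^p} \to 0$ as $t \to \infty$ for $f \in L^p$, which is classical (for $f \in C_0^\infty$ one has $\|e^{t\Delta}f\|_{L^p} \lesssim t^{-\frac{3}{2}(1 - 1/p)}\|f\|_{L^1}$, then extend by density and uniform boundedness). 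Combining the two limits gives $\sum_{N \in 2^{\mathbb Z}} P_N f = f$ with convergence in $L^p$.

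The main obstacle, and the only place where more than soft semigroup theory is needed, is to upgrade this partial-sum convergence to genuine unconditional (or at least the claimed) convergence of the full series and to control it uniformly — i.e. one really wants the square function bound $\|f\|_{L^p} \sim \|(\sum_N |P_N f|^2)^{1/2}\|_{L^p}$, which is Lemma \ref{square} in the paper and which guarantees that $\sum_N P_N f$ converges in $L^p$ in a robust sense. This in turn rests on the Mikhlin multiplier theorem for $H_\alpha$ (valid by the Gaussian estimate \eqref{heatkernel}, via Sikora–Wright type results \cite{SW} or the general theory of Blunck–Kunstmann), applied to the function $m(\lambda) = e^{-\lambda/N^2} - e^{-4\lambda/N^2}$, together with a Rademacher/Khintchine randomization argument to pass from a single multiplier bound to the square function. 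So the proof structure is: (1) telescoping identity for partial sums; (2) $e^{-sH_\alpha}f \to f$ in $L^p$ as $s \to 0$; (3) $e^{-tH_\alpha}f \to 0$ in $L^p$ as $t \to \infty$, both via domination by $e^{t\Delta}$ and density; (4) invoke the Mikhlin multiplier theorem / square function estimate to conclude convergence in $L^p$ of the full dyadic sum, exactly as in \cite{KMVZS}.
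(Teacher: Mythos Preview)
Your proposal is correct and matches the paper's approach: the paper does not give a proof of this lemma at all, but simply asserts that since the heat kernel of $H_\alpha$ obeys the Gaussian bound \eqref{heatkernel}, the Mikhlin multiplier theorem holds and hence the Littlewood--Paley theory from \cite{KMVZS} (written there for $\alpha=2$) carries over verbatim. Your telescoping argument with the two endpoint limits is exactly the standard way to unpack that reference; note that steps (1)--(3) already suffice for the statement as written (convergence of the symmetric dyadic partial sums in $L^p$), while step (4) is really the content of Lemma~\ref{square} rather than Lemma~\ref{LPdecompostion}.
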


\begin{lemma}\label{square}
Let $0\leq s<2$ and $1<p<+\infty$. If $k>0$ and $0<\alpha\leq 2$, then
\begin{align}\label{squareestimate}
\Big\|(\sum_{N\in 2^{{\bf{Z}}}}N^{2s}|P_{N}f|^{2})^{\frac{1}{2}}\Big\|_{L^{p}({\bf R}^{3})}\sim \|(H_{\alpha})^{\frac{s}{2}}\|_{L^{p}({\bf R}^{3})}.
\end{align}
\end{lemma}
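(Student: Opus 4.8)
The statement \eqref{squareestimate} is the square-function estimate for the heat-based Littlewood--Paley projections adapted to $H_\alpha$; since $H_\alpha$ has a heat kernel obeying the Gaussian bound \eqref{heatkernel}, it falls within the general spectral-multiplier framework, and I would follow the proof given in \cite{KMVZS} for $\alpha=2$. The plan is to reduce the two-sided bound to uniform $L^p$-estimates for \emph{randomized} spectral multipliers of $H_\alpha$, and then to verify those via the Mikhlin--H\"ormander multiplier theorem, which is available for $H_\alpha$ thanks to \eqref{heatkernel} (cf. \cite{SW}). First, Khintchine's inequality gives, with $\{r_N\}_{N\in 2^{\bf Z}}$ a Rademacher sequence,
$$\Big\|\big(\textstyle\sum_{N\in 2^{\bf Z}}N^{2s}|P_Nf|^2\big)^{1/2}\Big\|_{L^p}^p\sim\int_0^1\Big\|\sum_{N\in 2^{\bf Z}}r_N(\omega)N^sP_Nf\Big\|_{L^p}^p\,d\omega,$$
so it suffices to prove a matching pair of inequalities between $\|H_\alpha^{s/2}f\|_{L^p}$ and the right-hand side, uniformly in $\omega$.

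For the upper bound, write $\psi(\lambda)=e^{-\lambda}-e^{-4\lambda}$, so that $P_N=\psi(N^{-2}H_\alpha)$; note $\psi(\lambda)=3\lambda+O(\lambda^2)$ near $0$ and $\psi$ decays exponentially at $\infty$. Then $N^sP_NH_\alpha^{-s/2}=\phi(N^{-2}H_\alpha)$ with $\phi(\mu)=\mu^{-s/2}\psi(\mu)$, which is smooth on $(0,\infty)$, bounded by $C\mu^{1-s/2}$ near $0$ (here $1-s/2>0$ since $s<2$), and exponentially small at $\infty$; more precisely $|\mu^k\phi^{(k)}(\mu)|\lesssim_{k,M}\min(\mu^{1-s/2},\mu^{-M})$ for every $M$. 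Hence the randomized multiplier $m_\omega(\mu):=\sum_{N}r_N(\omega)\phi(N^{-2}\mu)$, a dyadic sum of dilates of $\phi$ with signs $\pm1$, satisfies, for every $k$,
$$\sup_{\mu>0}\big|\mu^km_\omega^{(k)}(\mu)\big|\le\sup_{\mu>0}\sum_{j\in{\bf Z}}\big|(2^{-2j}\mu)^k\phi^{(k)}(2^{-2j}\mu)\big|\le C_k,$$
uniformly in $\omega$, because along the lacunary set $\{2^{-2j}\mu\}_{j\in{\bf Z}}$ the quantity $|\nu^k\phi^{(k)}(\nu)|$ is summable with a bound independent of $\mu$. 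By the Mikhlin--H\"ormander theorem for $H_\alpha$, $\sup_\omega\|m_\omega(H_\alpha)\|_{L^p\to L^p}<\infty$ for $1<p<\infty$, and thus $\|\sum_N r_N(\omega)N^sP_Nf\|_{L^p}\lesssim\|H_\alpha^{s/2}f\|_{L^p}$ uniformly in $\omega$, which is the $\lesssim$ half of \eqref{squareestimate}.

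For the lower bound I would argue by duality. Observe $\sum_{N\in 2^{\bf Z}}\psi(N^{-2}\lambda)\equiv1$ on $(0,\infty)$ (a telescoping sum), i.e. the identity $\sum_NP_N=I$ on $L^p$ of Lemma \ref{LPdecompostion}. Since $\psi>0$ on $(0,\infty)$, I may pick an auxiliary $\widetilde\psi\in C_c^\infty((0,\infty))$ normalized so that $\sum_{N\in 2^{\bf Z}}N^s\widetilde\psi(N^{-2}\lambda)\psi(N^{-2}\lambda)=\lambda^{s/2}$ for all $\lambda>0$; equivalently, with $\widetilde P_N=\widetilde\psi(N^{-2}H_\alpha)$ (self-adjoint, with Gaussian bounds), one has $H_\alpha^{s/2}=\sum_NN^s\widetilde P_NP_N$ on $L^p$. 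Then for $g\in L^{p'}$ with $\|g\|_{L^{p'}}=1$,
$$\big|\langle H_\alpha^{s/2}f,g\rangle\big|=\Big|\sum_NN^s\langle P_Nf,\widetilde P_Ng\rangle\Big|\le\Big\|\big(\textstyle\sum_NN^{2s}|P_Nf|^2\big)^{1/2}\Big\|_{L^p}\Big\|\big(\textstyle\sum_N|\widetilde P_Ng|^2\big)^{1/2}\Big\|_{L^{p'}},$$
by Cauchy--Schwarz in $N$ and H\"older in $x$; the last factor is $\lesssim\|g\|_{L^{p'}}=1$ by the argument of the previous paragraph applied with $\phi$ replaced by $\widetilde\psi$ and $s=0$. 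Taking the supremum over $g$ gives $\|H_\alpha^{s/2}f\|_{L^p}\lesssim\|(\sum_NN^{2s}|P_Nf|^2)^{1/2}\|_{L^p}$. All formal manipulations are first justified on a dense class (e.g. $f$ with $H_\alpha^{s/2}f\in L^p$) using Lemma \ref{LPdecompostion}, then extended by density.

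The main obstacle is the uniform Mikhlin bound of the second paragraph: this is precisely where the hypothesis $0\le s<2$ is consumed (if $s=2$ then $\phi(\mu)=\mu^{-1}\psi(\mu)\to3$ as $\mu\to0$, the series $\sum_j\phi(2^{-2j}\mu)$ diverges, and the argument breaks), and it is where the specific structure of the heat-based projector enters, namely that $\psi(\lambda)=e^{-\lambda}-e^{-4\lambda}$ vanishes to exactly first order at $0$ and decays exponentially at $\infty$. Everything else --- Khintchine's inequality, the Cauchy--Schwarz/H\"older duality, the telescoping identity for $\sum_NP_N$, and the Mikhlin multiplier theorem and Gaussian heat-kernel bounds for $H_\alpha$ --- is soft and already available from \eqref{heatkernel}, \cite{SW}, and Lemma \ref{LPdecompostion}.
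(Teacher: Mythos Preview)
The paper does not give its own proof of this lemma; it simply remarks that, since the Mikhlin multiplier theorem holds for $H_\alpha$ (a consequence of the Gaussian heat-kernel bound \eqref{heatkernel}), the Littlewood--Paley theory of \cite{KMVZS} carries over verbatim from the case $\alpha=2$. Your proposal supplies exactly the argument that the paper outsources: reduce via Khintchine to uniform $L^p$ bounds for randomized multipliers $m_\omega(H_\alpha)$, verify the Mikhlin symbol estimates for $m_\omega$ using the decay of $\phi(\mu)=\mu^{-s/2}(e^{-\mu}-e^{-4\mu})$ (this is where $s<2$ enters), and obtain the reverse inequality by the standard duality/Cauchy--Schwarz trick with a compactly supported auxiliary multiplier $\widetilde\psi$. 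This is the expected proof and is consistent with the paper's citation; there is no discrepancy in approach to report.
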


\begin{lemma} \label{Bernstein}
Let $1<p\leq q<+\infty$ and $s\in {\bf R}$. If $k>0$ and $0<\alpha\leq 2$, then

(i)
\begin{align}\label{Bernsteinpp}
\|P_{N}f\|_{L^{p}({\bf R}^{3})}\lesssim \|f\|_{L^{p}({\bf R}^{3})}.
\end{align}

(ii)
\begin{align}\label{Bernsteinpq}
\|P_{N}f\|_{L^{q}({\bf R}^{3})}\lesssim N^{3(\frac{1}{p}-\frac{1}{q})}\|f\|_{L^{p}({\bf R}^{3})}.
\end{align}

(iii)
\begin{align}\label{Bernsteinsim}
\|(H_{\alpha})^{\frac{s}{2}}P_{N}f\|_{L^{p}({\bf R}^{3})}\sim N^{s}\|f\|_{L^{p}({\bf R}^{3})}.
\end{align}
\end{lemma}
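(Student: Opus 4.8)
The final statement to prove is Lemma~\ref{Bernstein}, the Bernstein-type estimates for the heat-kernel Littlewood--Paley projections $P_N = e^{-N^{-2}H_\alpha}-e^{-4N^{-2}H_\alpha}$ adapted to $H_\alpha$. The plan is to run the standard heat-kernel argument, leaning entirely on the Gaussian upper bound \eqref{heatkernel}; I would handle parts (i)--(iii) in that order.

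\begin{proof}[Proof sketch]
For part (i), write $P_N = \varphi(N^{-2}H_\alpha)$ where $\varphi(\lambda)=e^{-\lambda}-e^{-4\lambda}$ is a bounded function that vanishes at $0$ and $\infty$; its Schwartz-class profile means the kernel $P_N(x,y)$ can be expressed through the heat kernel by subordination, and the Gaussian bound \eqref{heatkernel} gives the pointwise estimate
\begin{align}\label{eq:PNkernel}
|P_N(x,y)| \lesssim N^{3}\,\bigl(1+N|x-y|\bigr)^{-M}
\end{align}
for any $M$ (here one uses that $e^{-tH_\alpha}(x,y)$ is dominated by the free kernel, and that $P_N$ is a fixed linear combination/integral of $e^{-tH_\alpha}$ at scales $t\sim N^{-2}$). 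The right-hand side of \eqref{eq:PNkernel} has $L^1$ norm in $y$ bounded uniformly in $N$, so Young's inequality (Schur test) immediately yields $\|P_N f\|_{L^p}\lesssim \|f\|_{L^p}$ for all $1<p<\infty$ --- in fact for $1\le p\le\infty$ --- proving (i). For part (ii), the same kernel bound \eqref{eq:PNkernel} gives $\|P_N(x,\cdot)\|_{L^{p'}}\lesssim N^{3/p}$ uniformly, so Young's inequality $\|P_N f\|_{L^q}\le \|P_N(x,\cdot)\|_{L^{s}}\|f\|_{L^p}$ with $1+\tfrac1q = \tfrac1s+\tfrac1p$, i.e. $\tfrac1s = 1-(\tfrac1p-\tfrac1q)$, produces the gain $N^{3(1/p-1/q)}$; one checks the exponent $s$ lies in $[1,\infty]$ so the kernel estimate applies, giving \eqref{Bernsteinpq}.

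For part (iii), I would use the spectral multiplier formalism: $(H_\alpha)^{s/2}P_N = m_N(H_\alpha)$ where $m_N(\lambda)=\lambda^{s/2}\varphi(N^{-2}\lambda) = N^s (N^{-2}\lambda)^{s/2}\varphi(N^{-2}\lambda)$, so $m_N(\lambda) = N^s\,\psi(N^{-2}\lambda)$ with $\psi(\mu)=\mu^{s/2}\varphi(\mu)$ a function that again decays like a Schwartz function at both $0$ and $\infty$ (the factor $\mu^{s/2}$ only helps at $0$, and for $s<2$, $s$ real, $\psi$ is smooth enough that its associated operator kernel satisfies the same type of bound as \eqref{eq:PNkernel} with prefactor $N^{s+3}$ after rescaling). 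Thus $\|(H_\alpha)^{s/2}P_N f\|_{L^p}\lesssim N^s\|f\|_{L^p}$. For the reverse inequality one writes, using a fattened projector $\widetilde P_N$ with $\widetilde P_N P_N = P_N$ (e.g. $\widetilde P_N = \sum_{N/C\le M\le CN} P_M$, a finite sum), $f$-side estimate $\|P_N f\|_{L^p} = \|(H_\alpha)^{-s/2}\widetilde P_N\,(H_\alpha)^{s/2}P_N f\|_{L^p}\lesssim N^{-s}\|(H_\alpha)^{s/2}P_N f\|_{L^p}$, where $(H_\alpha)^{-s/2}\widetilde P_N = \widetilde m_N(H_\alpha)$ has symbol $N^{-s}\widetilde\psi(N^{-2}\lambda)$ with $\widetilde\psi$ still nice (the negative power $\mu^{-s/2}$ is harmless because $\widetilde\psi$ is cut off away from $\mu=0$). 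Combining the two directions gives \eqref{Bernsteinsim}. Since Lemma~\ref{square} is already available and $P_N$ is exactly its building block, one could alternatively deduce (iii) from the single-frequency case of the square function estimate, but the direct spectral-multiplier computation is cleaner.

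The only genuine point of care --- and the place I expect the main technical obstacle --- is justifying the pointwise kernel bound \eqref{eq:PNkernel} and its weighted $L^1$-integrability purely from the one-sided Gaussian estimate \eqref{heatkernel}, since we have no lower bound or Hölder regularity for $e^{-tH_\alpha}(x,y)$ and no Fourier-side description of $H_\alpha$. The resolution is that $P_N$ and $(H_\alpha)^{s/2}P_N$ are genuine \emph{differences/integrals} of heat semigroups at comparable times: e.g. $P_N = \int_{N^{-2}}^{4N^{-2}} H_\alpha e^{-tH_\alpha}\,dt$ and more generally $m_N(H_\alpha) = \int_0^\infty e^{-tH_\alpha}\,d\nu_N(t)$ for a measure $\nu_N$ whose total variation (weighted by powers of $t$) concentrates at $t\sim N^{-2}$ with controlled mass, via the subordination / Laplace-transform representation of the profile. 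Feeding \eqref{heatkernel} into this representation and integrating the Gaussian $(4\pi t)^{-3/2}e^{-|x-y|^2/(4t)}$ against $d\nu_N(t)$ yields \eqref{eq:PNkernel}; this is exactly the computation carried out in \cite{KMVZS} for $\alpha=2$, and it goes through verbatim here because it uses only \eqref{heatkernel}. Everything else is Schur's test and elementary Young's-inequality bookkeeping.
\end{proof}
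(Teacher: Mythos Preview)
The paper does not supply its own proof of this lemma: it merely states the result and refers to \cite{KMVZS} for the case $\alpha=2$, the point being that the argument there uses only the Gaussian upper bound \eqref{heatkernel} and hence carries over unchanged. Your sketch is precisely that standard heat-kernel argument, so in spirit it matches what the paper intends.

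Two small remarks. First, as written in the paper, part (iii) has $\|f\|_{L^p}$ on the right-hand side, which cannot literally be an equivalence; the intended statement (and what you actually prove) is $\|(H_\alpha)^{s/2}P_Nf\|_{L^p}\sim N^s\|P_Nf\|_{L^p}$. Second, your ``fattened projector'' step $\widetilde P_N P_N=P_N$ is not available for these heat-kernel projections: the symbol $\varphi(\mu)=e^{-\mu}-e^{-4\mu}$ is strictly positive on all of $(0,\infty)$, so no finite sum of $P_M$'s acts as the identity on the range of $P_N$, and in particular $\widetilde\psi$ is \emph{not} cut off away from $\mu=0$. The fix is to factor $P_N$ itself, writing $P_N = e^{-\frac{1}{2}N^{-2}H_\alpha}\cdot\bigl(e^{-\frac{1}{2}N^{-2}H_\alpha}-e^{-\frac{7}{2}N^{-2}H_\alpha}\bigr)$ and applying $(H_\alpha)^{\pm s/2}$ to the first factor, whose symbol $\mu^{\pm s/2}e^{-\mu/2}$ is harmless for $s\ge0$; the case $s<0$ then follows by swapping the roles of the two inequalities. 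This is the route taken in \cite{KMVZS}, and with that adjustment your argument is complete.
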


In order to establish linear profile decomposition associated with $e^{-itH_{\alpha}}$ and find a critical element,
we apply the argument of \cite{KMVZE} to get some convergence results. For convenience, define two operators: Let $\{x_{n}\}_{n=1}^{+\infty}\subset {\bf R}^{3}$,
\begin{align}\label{operatortranslation}
H_{\alpha}^{n}:=-\Delta +V(x+x_{n})\;\;\text{and}\;\;
H_{\alpha}^{\infty}:=\left\{
\begin{array}{rcl}
-\Delta+V(x+\bar{x})& &{ x_{n}\rightarrow \bar{x}\in {\bf{R}}^{3}}\\
-\Delta\quad\quad\quad& &{ |x_{n}|\rightarrow +\infty}
\end{array}\right.
\end{align}
Obviously, $\tau_{x_{n}}H_{\alpha}^{n}\psi=H_{\alpha}\tau_{x_{n}}\psi$, where $\tau_{y}\psi(x)=\psi(x-y)$. So $H_{\alpha}$ doesn't commute with
$\tau_{x}$. $H_{\alpha}^{\infty}$ can be regarded as limits of $H_{\alpha}^{n}$ in the following sense (see \cite{KMVZE} for $\alpha=2$).

\begin{lemma}\label{limit}
Let $k>0$ and $1<\alpha< 2$. Assume $t_{n}\rightarrow\bar{t}\in {\bf R}$. and $\{x_{n}\}_{n=1}^{+\infty}\rightarrow\bar{x}\in{\bf R}^{3}$ or $|x_{n}|\rightarrow+\infty$.
Then
\begin{align}\label{operatorlimit}
\lim_{n\rightarrow+\infty}\|(H_{\alpha}^{n}-H_{\alpha}^{\infty})\psi\|_{H^{-1}}=0,\;\;\forall \psi\in H^{1}.
\end{align}
\begin{align}\label{grouplimit}
\lim_{n\rightarrow+\infty}\|(e^{-it_{n}H_{\alpha}^{n}}-e^{-i\bar{t}H_{\alpha}^{\infty}})\psi\|_{H^{-1}}=0,\;\;\forall \psi\in H^{-1}.
\end{align}
\begin{align}\label{fractionaloperatorlimit}
\lim_{n\rightarrow+\infty}\|((1+H_{\alpha}^{n})^{\frac{1}{2}}-(1+H_{\alpha}^{\infty})^{\frac{1}{2}})\psi\|_{L^{2}}=0,\;\;\forall \psi\in H^{1}.
\end{align}
For any $2<q\leq+\infty$ and $\frac{2}{q}+\frac{3}{r}=\frac{3}{2}$.
\begin{align}\label{Strichartzlimit}
\lim_{n\rightarrow+\infty}\|(e^{-itH_{\alpha}^{n}}-e^{-itH_{\alpha}^{\infty}})\psi\|_{L_{t}^{q}L_{x}^{r}}=0,\;\;\forall \psi\in L^{2}.
\end{align}
If $\bar{x}\neq 0$, then for any $t>0$,
\begin{align}\label{heatlimit}
\lim_{n\rightarrow+\infty}\|(e^{-tH_{\alpha}^{n}}-e^{-tH_{\alpha}^{\infty}})\delta_{0}\|_{H^{-1}}=0.
\end{align}
\end{lemma}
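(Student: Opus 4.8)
The plan is to run the two regimes $x_n\to\bar x$ and $|x_n|\to+\infty$ in parallel, writing $V_\infty=V(\cdot+\bar x)$ in the first case and $V_\infty\equiv 0$ in the second, and setting $W_n:=V(\cdot+x_n)-V_\infty$, $A_n:=1+H_\alpha^n$, $A_\infty:=1+H_\alpha^\infty$, so that $H_\alpha^n-H_\alpha^\infty=A_n-A_\infty=W_n$. Three facts are used throughout. First, since $e^{-tH_\alpha^n}(x,y)=e^{-tH_\alpha}(x+x_n,y+x_n)$ still obeys the Gaussian bound \eqref{heatkernel} and every translation--invariant norm of $V(\cdot+x_n)$ is bounded uniformly in $n$, all the functional--calculus estimates established for $H_\alpha$ --- the Sobolev equivalence of Lemma \ref{Sobolev}, Lemma \ref{Bernstein}, the Strichartz bound of Lemma \ref{Strichartz}, the uniform boundedness of $e^{\pm itH}$ and $e^{-tH}$ on $H^{\pm1}$, and the resolvent bound $\|(\lambda^2+A_n)^{-1}\|_{H^{-s}\to L^2}\lesssim\langle\lambda\rangle^{s-2}$ for $0\le s<2$ --- hold for $H_\alpha^n$ and $H_\alpha^\infty$ with constants independent of $n$. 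Second, splitting $V=V\mathbf 1_{|x|\le1}+V\mathbf 1_{|x|>1}$, the singular piece lies in $L^{p_0}$ for every $p_0<3/\alpha$; here $1<\alpha<2$ is used: $p_0$ can be chosen in $(3/2,3/\alpha)$, so that $s_0:=3/p_0-1\in(\alpha-1,1)$. The tail piece lies in $L^q$ for every $q>3/\alpha$, in particular in $L^3\cap L^\infty$ (here $\alpha>1$ enters). Third, combining these with H\"older and Sobolev embedding gives the $n$--uniform bounds $\|W_n\|_{H^1\to H^{-s_0}}\lesssim 1$ (hence also $\|W_n\|_{H^1\to H^{-1}}\lesssim 1$) and, for admissible $(\tilde q,\tilde r)$ with $\tilde r<6/(3-\alpha)$, $\|W_n\|_{L^{\tilde r}_x\to L^{\tilde r'}_x}\lesssim 1$.

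The workhorse is \eqref{operatorlimit}, which I would prove in the stronger form $\|W_n\phi\|_{H^{-s_0}}\to 0$ for every $\phi\in H^1$. By the embedding $L^{r_0}\hookrightarrow H^{-s_0}$ with $1/r_0=1/p_0+1/6$, it suffices to show $\|W_n\phi\|_{L^{r_0}}\to 0$. Split $V$ as above: the tail piece is bounded and, tested against $\phi$, converges to $0$ pointwise (by translation continuity when $x_n\to\bar x$; because it slides off to spatial infinity and only meets the tail of $\phi$ when $|x_n|\to+\infty$), so its contribution vanishes by dominated convergence; the singular piece $V\mathbf 1_{|x|\le1}(\cdot+x_n)$ is concentrated in a fixed--radius ball about $-x_n$, where in the first regime it converges to the corresponding piece of $V(\cdot+\bar x)$ in $L^{p_0}$, while in the second regime the ball escapes to infinity and $\|\phi\|_{L^6}$ over it tends to $0$. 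This yields \eqref{operatorlimit}.

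The remaining four identities all follow from one Duhamel--plus--dominated--convergence scheme, after reducing --- via the $n$--uniform boundedness of the propagators involved --- to $\psi$ in a convenient dense subclass. For \eqref{grouplimit}: reduce to $\psi\in H^1$ (dense in $H^{-1}$), write $e^{-it_nH_\alpha^n}-e^{-i\bar tH_\alpha^\infty}=(e^{-it_nH_\alpha^n}-e^{-it_nH_\alpha^\infty})+(e^{-it_nH_\alpha^\infty}-e^{-i\bar tH_\alpha^\infty})$; the second summand $\to 0$ by $H^{-1}$--strong continuity of the unitary group and $t_n\to\bar t$, and for the first, Duhamel gives $-i\int_0^{t_n}e^{-i(t_n-s)H_\alpha^n}W_ne^{-isH_\alpha^\infty}\psi\,ds$, whose $H^{-1}$ norm vanishes by dominated convergence on the bounded $s$--interval, with integrand killed by \eqref{operatorlimit} and majorant $\|W_ne^{-isH_\alpha^\infty}\psi\|_{H^{-1}}\lesssim\|\psi\|_{H^1}$. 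Estimate \eqref{Strichartzlimit} is the same, reducing to $\psi\in C_c^\infty$, applying inhomogeneous Strichartz (Lemma \ref{Strichartz}) with $(\tilde q,\tilde r)$ as above, and dominating $\|W_ne^{-isH_\alpha^\infty}\psi\|_{L^{\tilde q'}_sL^{\tilde r'}_x}$ by the time decay of $e^{-isH_\alpha^\infty}\psi$ afforded by Strichartz on dyadic time blocks. For \eqref{fractionaloperatorlimit}, start from $A_n^{1/2}-A_\infty^{1/2}=\tfrac2\pi\int_0^\infty\lambda^2(\lambda^2+A_n)^{-1}W_n(\lambda^2+A_\infty)^{-1}\,d\lambda$ (square--root formula plus the resolvent identity); for each fixed $\lambda$ the integrand applied to $\psi\in H^1$ tends to $0$ in $L^2$ by \eqref{operatorlimit} and the resolvent bounds, while its $L^2$ norm is $\lesssim\|\psi\|_{H^1}\,\lambda^2\langle\lambda\rangle^{s_0-4}$, which behaves like $\lambda^2$ near $0$ and like $\lambda^{s_0-2}$ near $\infty$, hence is integrable since $s_0<1$; dominated convergence concludes. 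Finally \eqref{heatlimit}, where $\bar x\ne 0$ so $V_\infty=V(\cdot+\bar x)$ is regular at the origin: Duhamel gives $e^{-tH_\alpha^n}\delta_0-e^{-tH_\alpha^\infty}\delta_0=-\int_0^t e^{-(t-r)H_\alpha^n}W_ne^{-rH_\alpha^\infty}\delta_0\,dr$ and hence an $H^{-1}$ bound by $C\int_0^t\|W_ne^{-rH_\alpha^\infty}\delta_0\|_{L^{6/5}}\,dr$; the Gaussian bound localizes $e^{-rH_\alpha^\infty}\delta_0$ near the origin for small $r$, where $W_n$ is bounded (for large $n$, both potential singularities sitting at distance $\gtrsim|\bar x|$ from $0$), so the $r\to0^+$ part is dominated by $\|e^{r\Delta}\delta_0\|_{L^{6/5}}\lesssim r^{-1/4}$, while the far part of $W_n$ meets only the super--exponentially small Gaussian tail; after first making the genuinely nonvanishing tail $W_n\mathbf 1_{|x|>R}$ small by taking $R$ large (uniformly in $n$), dominated convergence finishes.

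The step I expect to be the main obstacle is producing the $n$--uniform, $\lambda$--integrable majorant in \eqref{fractionaloperatorlimit} (and its $r\to0^+$ counterpart in \eqref{heatlimit}): bounding the two resolvents by operator norms separately overcounts and leaves only a non--integrable $\lambda^{-1}$ tail, and the hypothesis $\alpha<2$ must be used in an essential way --- it is exactly what places the singular part of $V$ in some $L^{p_0}$ with $p_0>3/2$, hence $W_n:H^1\to H^{-s_0}$ with $s_0<1$, which is what pushes the tail exponent below $-1$. (The complementary restriction $\alpha>1$ enters only through the integrability at infinity of $V$, as it already does in Lemma \ref{Strichartz} and in the $L^3$--tail bound above.)
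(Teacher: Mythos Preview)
Your outline for \eqref{operatorlimit}, \eqref{grouplimit}, \eqref{fractionaloperatorlimit} and \eqref{heatlimit} is essentially the argument of Lemma~3.3 in \cite{KMVZE}, to which the paper itself defers (after the substitutions $\dot H^{\pm1}\to H^{\pm1}$, $H_\alpha\to 1+H_\alpha$, and homogeneous $\to$ inhomogeneous Sobolev equivalence). Your use of $\alpha<2$ to upgrade $W_n:H^1\to H^{-1}$ to $W_n:H^1\to H^{-s_0}$ with $s_0<1$, thereby making the $\lambda$-integral in the square-root formula absolutely convergent at infinity, is exactly the right refinement.

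The one place where your sketch diverges from the paper, and where it is too thin, is \eqref{Strichartzlimit}. Your constraint $\tilde r<6/(3-\alpha)$ (needed for $\|W_n\|_{L^{\tilde r}\to L^{\tilde r'}}\lesssim 1$ via H\"older against $V\in L^{3/\alpha-}$) forces $\tilde q>2$, hence $\tilde q'<2$, and then ``Strichartz on dyadic time blocks'' does not produce a summable majorant: each block contributes a H\"older-in-time factor $|I_j|^{1/\tilde q'-1/\tilde q}$ that grows, while the Strichartz mass of $e^{-isH_\alpha^\infty}\psi$ on $I_j$ merely tends to $0$, not summably against that weight. In the regime $|x_n|\to\infty$ this is a genuine obstacle, since $W_n=V(\cdot+x_n)$ has translation-invariant $L^p$ norms that do \emph{not} tend to $0$; the smallness must come from the spatial location of $e^{is\Delta}\psi$ relative to the singularity at $-x_n$. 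The paper handles this by first reducing (via interpolation with the endpoint $(2,6)$) to $(q,r)=(\infty,2)$, taking $\psi\in C_c^\infty$, and invoking the pointwise dispersive bound $|e^{is\Delta}\psi(x)|\lesssim_\psi\langle s\rangle^{-3/2}(1+|x|/\langle s\rangle)^{-M}$; after splitting $V(\cdot+x_n)=V\chi_{|\cdot|\le R}(\cdot+x_n)+V\chi_{|\cdot|>R}(\cdot+x_n)$ it computes directly in $L^{10/7}_{t,x}$ for the near part (getting decay in $|x_n|$) and in $L^1_tL^2_x$ or $L^{10/7}_{t,x}$ for the far part (getting smallness in $R$, the choice depending on whether $\alpha\le 3/2$ or $\alpha>3/2$). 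For the convergent regime $x_n\to\bar x$ the paper's argument is closer in spirit to yours --- Duhamel plus Strichartz with the endpoint dual pair $(2,6/5)$ --- but it buys the endpoint $\tilde q=2$ by pairing the near-singularity piece $\|V(\cdot+x_n)-V\|_{L^{15/11}(|x|\le 2\epsilon)}$ against $\|e^{-itH_\alpha}\psi\|_{L^2_tL^{10}_x}$, spending a fraction of a derivative on $\psi$ rather than imposing $\tilde r<6/(3-\alpha)$.
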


\begin{proof}
Here we only give the proof of \eqref{Strichartzlimit} because the others
can be obtained by using the same method of Lemma 3.3 in \cite{KMVZE}, where in the proof of \eqref{operatorlimit}-\eqref{fractionaloperatorlimit} we need to
replace $\dot{H}^{1}$, $\dot{H}^{-1}$, $H_{\alpha}^{n}$, $H_{\alpha}^{\infty}$ and homogeneous Sobolev equivalence Theorem 2.2 in \cite{KMVZE}
with $H^{1}$, $H^{-1}$, $1+H_{\alpha}^{n}$, $1+H_{\alpha}^{\infty}$ and inhomogeneous Sobolev equivalence Lemma \ref{Sobolev}, respectively
and in the proof of \eqref{heatlimit} we need to replace $\dot{H}^{-1}$ by $H^{-1}$.
It only suffices to prove \eqref{Strichartzlimit} in the case $(q, r)=(\infty, 2)$, since the general case can be obtained by
interpolating with the end-point Strichartz estimates (i.e., $(q, r)=(2, 6)$). By density and Strichartz estimates, let $\psi$ be a smooth function with compact support,
so that for $\forall M>0$,
\begin{align}\label{dispersive}
|(e^{-it\Delta}\psi)(x)|\lesssim_{\psi}\langle t\rangle^{-\frac{3}{2}}\Big(1+\frac{|x|}{\langle t\rangle}\Big)^{-M}.
\end{align}
By the definition of $H_{\alpha}^{\infty}$, we consider two cases: $|x_{n}|\rightarrow+\infty$ and $x_{n}\rightarrow\bar{x}$.

For the first case $|x_{n}|\rightarrow+\infty$, we have
\begin{align*}
e^{it\Delta}\psi&=e^{-itH_{\alpha}^{n}}\psi+i\displaystyle\int_{0}^{t}e^{-i(t-s)H_{\alpha}^{n}}V(x+x_{n})e^{is\Delta}\psi ds\\
&=e^{-itH_{\alpha}^{n}}\psi+i\displaystyle\int_{0}^{t}e^{-i(t-s)H_{\alpha}^{n}}(\chi_{|x+x_{n}|\leq R}+\chi_{|x+x_{n}|> R})V(x+x_{n})e^{is\Delta}\psi ds,
\end{align*}
where $R$ is a sufficiently large number that will be chosen later and $\chi_{A}$ is a characteristic function on a set $A$.
Hence by Strichartz estimates \eqref{Strichartzestimates} and dispersive estimates \eqref{dispersive}, we find that
\begin{align}\label{twocases}
\|(e^{-itH_{\alpha}^{n}}-e^{-itH_{\alpha}^{\infty}})\psi\|_{L_{t}^{\infty}L_{x}^{2}}&\lesssim \|\chi_{|x+x_{n}|\leq R}V(x+x_{n})e^{it\Delta}\psi\|_{L_{t,x}^{\frac{10}{7}}}
+\|\chi_{|x+x_{n}|> R}V(x+x_{n})e^{it\Delta}\psi\|_{L_{t}^{q}L_{x}^{r}}\nonumber\\
&\lesssim \Big\|\chi_{|x+x_{n}|\leq R}V(x+x_{n})\langle t\rangle^{-\frac{3}{2}}\Big(1+\frac{|x|}{\langle t\rangle}\Big)^{-M}\Big\|_{L_{t,x}^{\frac{10}{7}}}\nonumber\\
&\quad+\Big\|\chi_{|x+x_{n}|> R}V(x+x_{n})\langle t\rangle^{-\frac{3}{2}}\Big(1+\frac{|x|}{\langle t\rangle}\Big)^{-M}\Big\|_{L_{t}^{q}L_{x}^{r}}\nonumber\\
&:=I_{1}+I_{2},
\end{align}
where $(q,r)=(1,2)$ if $1<\alpha\leq \frac{3}{2}$ and $(q, r)=(\frac{10}{7}, \frac{10}{7})$ if $\frac{3}{2}<\alpha\leq 2$.

We note that $|x_{n}|\geq R$ for $n$ large enough. So for the first part $I_{1}$, $R<|x|\sim |x_{n}|$. If $|x|\leq \langle t\rangle$, then $|x_{n}|\lesssim \langle t\rangle$. Therefore,
\begin{align}\label{nlarge1}
I_{1}\lesssim \|V(x+x_{n})\|_{L_{x}^{\frac{10}{7}}(|x+x_{n}|\leq R)}\Big\||t|^{-\frac{3}{2}}\Big\|_{L_{t}^{\frac{10}{7}}(|t|\gtrsim |x_{n}|)}\lesssim R^{-\alpha+\frac{21}{10}}|x_{n}|^{-\frac{4}{5}}.
\end{align}
If $|x|> \langle t\rangle$,
$$
I_{1}\lesssim \|\chi_{|x+x_{n}|\leq R}V(x+x_{n})\langle t\rangle^{-\frac{3}{2}+M}|x|^{-M}\|_{L_{t,x}^{\frac{10}{7}}}.
$$
When $|t|\leq 1$, it is easy to get
\begin{align}\label{nlarge2}
I_{1}\lesssim  R^{-\alpha+\frac{21}{10}}|x_{n}|^{-M}.
\end{align}
When $|t|>1$ and $M$ sufficiently large,
$$
\|\langle t\rangle^{-\frac{3}{2}+M}\|_{L_{t}^{\frac{10}{7}}(|t|\lesssim |x|)}\lesssim |x|^{-\frac{3}{2}+M+\frac{7}{10}},
$$
so
\begin{align}\label{nlarge3}
I_{1}\lesssim  R^{-\alpha+\frac{21}{10}}|x_{n}|^{-\frac{4}{5}}.
\end{align}
Putting \eqref{nlarge1}, \eqref{nlarge2} and \eqref{nlarge3} together gives that $I_{1}$ tends to 0 as $n$ approaches $+\infty$, regardless of $R>0$.

For the second part $I_{2}$, we consider two subcases: $1<\alpha\leq\frac{3}{2}$ and $\frac{3}{2}<\alpha\leq 2$. If $1<\alpha\leq \frac{3}{2}$, $(q,r)=(1,2)$. When $|x|\leq \langle t\rangle$,
using H\"{o}lder inequality, we have
\begin{align}\label{Rlarge1}
I_{2}\lesssim \|V(x+x_{n})\|_{L_{x}^{\frac{3}{\alpha-}}(|x+x_{n}|> R)}\Big\||t|^{-\frac{3}{2}}\Big\|_{L_{t}^{1}L_{x}^{\frac{6}{3-2(\alpha-)}}(|x|\leq\langle t\rangle)}\lesssim R^{1-\frac{\alpha}{\alpha-}}.
\end{align}
 When $|x|> \langle t\rangle$,
\begin{align}\label{Rlarge2}
I_{2}\lesssim \|V(x+x_{n})\|_{L_{x}^{\frac{3}{\alpha-}}(|x+x_{n}|> R)}\Big\||t|^{-\frac{3}{2}}|x|^{-M}\langle t\rangle^{M}\Big\|_{L_{t}^{1}L_{x}^{\frac{6}{3-2(\alpha-)}}(|x|>\langle t\rangle)}\lesssim R^{1-\frac{\alpha}{\alpha-}}.
\end{align}
It follows from \eqref{Rlarge1} and \eqref{Rlarge2} that $I_{2}$ can be chosen arbitrarily small if we take $R$ sufficiently large.

If $\frac{3}{2}<\alpha\leq 2$, $(q, r)=(\frac{10}{7}, \frac{10}{7})$.
When $|x|\leq \langle t\rangle$,
using H\"{o}lder inequality, we have
\begin{align}\label{Rlarge3}
I_{2}\lesssim \|V(x+x_{n})\|_{L_{x}^{2}(|x+x_{n}|> R)}\Big\||t|^{-\frac{3}{2}}\Big\|_{L_{t}^{\frac{10}{7}}L_{x}^{5}(|x|\leq\langle t\rangle)}\lesssim R^{-\alpha+\frac{3}{2}}.
\end{align}
 When $|x|> \langle t\rangle$,
\begin{align}\label{Rlarge4}
I_{2}\lesssim \|V(x+x_{n})\|_{L_{x}^{2}(|x+x_{n}|> R)}\Big\||t|^{-\frac{3}{2}}|x|^{-M}\langle t\rangle^{M}\Big\|_{L_{t}^{\frac{10}{7}}L_{x}^{5}(|x|>\langle t\rangle)}\lesssim R^{-\alpha+\frac{3}{2}}.
\end{align}
It follows from \eqref{Rlarge3} and \eqref{Rlarge4} that $I_{2}$ can be chosen arbitrarily small if we take $R$ sufficiently large. So we get \eqref{Strichartzlimit} in the case $|x_{n}|\rightarrow+\infty$.

Now we turn to the other case $x_{n}\rightarrow\bar{x}$. Here we obtain that
\begin{align*}
e^{-itH_{\alpha}^{\infty}}\psi&=e^{-itH_{\alpha}^{n}}\psi+i\displaystyle\int_{0}^{t}e^{-i(t-s)H_{\alpha}^{n}}(V(x+x_{n})-V(x+\bar{x}))e^{-isH_{\alpha}^{\infty}}\psi ds
\end{align*}
Hence by Strichartz estimates \eqref{Strichartzestimates}, we have
\begin{align*}
\|(e^{-itH_{\alpha}^{n}}-e^{-itH_{\alpha}^{\infty}})\psi\|_{L_{t}^{\infty}L_{x}^{2}}&\lesssim \|(V(x+x_{n})-V(x+\bar{x}))e^{-itH_{\alpha}^{\infty}}\psi\|_{L_{t}^{2}L_{x}^{\frac{6}{5}}}\nonumber\\
&=\|(V(x+x_{n}-\bar{x})-V(x))e^{-itH_{\alpha}}\tau_{\bar{x}}\psi\|_{L_{t}^{2}L_{x}^{\frac{6}{5}}}
\end{align*}
Replacing $x_{n}-\bar{x}$ by $x_{n}$ and $\tau_{\bar{x}}\psi$ by $\psi$, so we can suppose that $\bar{x}=0$. Hence, for $\forall \epsilon>0$, $|x_{n}|<\epsilon$ when $n$ is large enough. Besides, by
Newton-Leibniz formula,
$$
|V(x+x_{n})-V(x)|\lesssim |x_{n}|\displaystyle\int_{0}^{1}|x+(1-t)x_{n}|^{-\alpha-1}dt.
$$
Using H\"{o}lder inequality and Strichartz estimates \eqref{Strichartzestimates} yields that
\begin{align*}
\|(e^{-itH_{\alpha}^{n}}-e^{-itH_{\alpha}^{\infty}})\psi\|_{L_{t}^{\infty}L_{x}^{2}}
&\lesssim \|(V(x+x_{n})-V(x))\|_{L_{x}^{\frac{15}{11}}(|x|\leq 2\epsilon)}\|e^{-itH_{\alpha}}\psi\|_{L_{t}^{2}L_{x}^{10}}\nonumber\\
&\quad+\|(V(x+x_{n})-V(x))\|_{L_{x}^{\frac{3}{2}}(|x|> 2\epsilon)}\|e^{-itH_{\alpha}}\psi\|_{L_{t}^{2}L_{x}^{6}}\nonumber\\
&\lesssim\epsilon^{\frac{1}{5}}\|\psi\|_{{\mathcal H}_{k}^{\frac{1}{10}}}+|x_{n}|\epsilon^{-(\alpha-1)},
\end{align*}
which implies that
\begin{align*}
\lim_{n\rightarrow+\infty}\|(e^{-itH_{\alpha}^{n}}-e^{-itH_{\alpha}^{\infty}})\psi\|_{L_{t}^{\infty}L_{x}^{2}}\lesssim \epsilon^{\frac{1}{5}}.
\end{align*}
Since $\epsilon$ can be chosen arbitrarily,
\begin{align*}
\lim_{n\rightarrow+\infty}\|(e^{-itH_{\alpha}^{n}}-e^{-itH_{\alpha}^{\infty}})\psi\|_{L_{t}^{\infty}L_{x}^{2}}=0.
\end{align*}
Thus, we get \eqref{Strichartzlimit} in the case $x_{n}\rightarrow\bar{x}$.

\end{proof}

Once getting \eqref{fractionaloperatorlimit} and \eqref{Strichartzlimit}, we follow the same proof of Corollary 3.4 for $\alpha=2$ in \cite{KMVZE} and replace
$\dot{H}^{1}$ and $H_{\alpha}^{n}$
with $H^{1}$ and $1+H_{\alpha}^{n}$ in the procedure of the proof, respectively,
 to get the following decaying estimates.
\begin{lemma}\label{decay}
Let $k>0$, $1<\alpha<2$.
Given $\psi\in H^{1}$, $t_{n}\rightarrow\pm\infty$ and any sequence $\{x_{n}\}_{n=1}^{+\infty}\subset {\bf R}^{3}$. We have
\begin{align}\label{decay6}
\lim_{n\rightarrow+\infty}\|e^{-it_{n}H_{\alpha}^{n}}\psi\|_{L_{x}^{6}}=0.
\end{align}
Moreover, if $\psi\in H^{1}$, then for $2<p<\infty$,
\begin{align}\label{decayp}
\lim_{n\rightarrow+\infty}\|e^{-it_{n}H_{\alpha}^{n}}\psi\|_{L_{x}^{p}}=0.
\end{align}
\end{lemma}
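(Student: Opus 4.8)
The plan is to follow the proof of Corollary~3.4 of \cite{KMVZE}, replacing $\dot H^1$ and $H_\alpha^n$ there by $H^1$ and $1+H_\alpha^n$, and thereby reducing everything to the dispersive decay of the free Schr\"odinger group and of $e^{-itH_\alpha}$. Since $\tau_{x_n}H_\alpha^n=H_\alpha\tau_{x_n}$, we have $e^{-itH_\alpha^n}=\tau_{-x_n}e^{-itH_\alpha}\tau_{x_n}$, so $\|e^{-itH_\alpha^n}\psi\|_{L^p}=\|e^{-itH_\alpha}(\tau_{x_n}\psi)\|_{L^p}$; the Gaussian bound \eqref{heatkernel}, Lemma~\ref{Sobolev} and Lemma~\ref{Strichartz} all hold for $H_\alpha^n$ with constants uniform in $n$, and $e^{-itH_\alpha^n}$ is unitary on $\mathcal H^1_k$, whose norm is $\sim\|\cdot\|_{H^1}$ uniformly in $n$; hence $\|e^{-itH_\alpha^n}\psi\|_{L^6}\lesssim\|\psi\|_{H^1}$ uniformly. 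It therefore suffices to prove \eqref{decay6} for $\psi$ in a dense subclass, for which I take $\psi\in C_c^\infty(\R^3\setminus\{0\})$ (dense in $H^1(\R^3)$ because $\{0\}$ has zero $H^1$-capacity in dimension $3$); for such $\psi$ the functions $H_\alpha^m\psi$ stay in $C_c^\infty(\R^3\setminus\{0\})$, so $\psi\in D((1+H_\alpha)^m)$ for all $m$, and $\psi\in D((1+H_\alpha^n)^m)$ with norms uniform in $n$ once $|x_n|$ is large (then $V(\cdot+x_n)$ is smooth and bounded, together with all derivatives, on $\supp\psi$). Passing to subsequences, we may assume $t_n\to+\infty$ (the case $-\infty$ is symmetric) and, by \eqref{operatortranslation}, either (a)~$x_n\to\bar x\in\R^3$ or (b)~$|x_n|\to\infty$.

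The heart of the matter is that $\|e^{it\Delta}\phi\|_{L^6}\to0$ and $\|e^{-itH_\alpha}\phi\|_{L^6}\to0$ as $t\to\pm\infty$ for every $\phi\in H^1$. The first is classical ($\|e^{it\Delta}\|_{L^{6/5}\to L^6}\lesssim|t|^{-1}$ on a dense class, together with $\|e^{it\Delta}\cdot\|_{L^6}\lesssim\|\cdot\|_{\dot H^1}$). For the second, with $\phi\in C_c^\infty(\R^3\setminus\{0\})$, the Sobolev embedding $\|f\|_{L^6}\lesssim\|(1+H_\alpha)^{1/2}f\|_{L^2}$ and unitarity give
\begin{align*}
\big|\,\|e^{-i(t+h)H_\alpha}\phi\|_{L^6}-\|e^{-itH_\alpha}\phi\|_{L^6}\,\big|\lesssim\big\|(e^{-ihH_\alpha}-1)(1+H_\alpha)^{1/2}\phi\big\|_{L^2}\le|h|\,\big\|(1+H_\alpha)^{3/2}\phi\big\|_{L^2},
\end{align*}
so $g(t):=\|e^{-itH_\alpha}\phi\|_{L^6}$ is Lipschitz on $\R$, while Lemma~\ref{Strichartz} with the admissible pair $(2,6)$ gives $g\in L^2_t(\R)$. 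A Lipschitz, square-integrable function on $\R$ vanishes at $\pm\infty$, hence $g(t)\to0$; by density and the uniform bound $\|e^{-itH_\alpha}\cdot\|_{L^6}\lesssim\|\cdot\|_{H^1}$ this holds for all $\phi\in H^1$.

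In regime (a),
\begin{align*}
\|e^{-it_nH_\alpha^n}\psi\|_{L^6}=\|e^{-it_nH_\alpha}(\tau_{x_n}\psi)\|_{L^6}\lesssim\|\tau_{x_n}\psi-\tau_{\bar x}\psi\|_{H^1}+\|e^{-it_nH_\alpha}(\tau_{\bar x}\psi)\|_{L^6}\to0
\end{align*}
by $H^1$-continuity of translation and the scalar decay. In regime (b), $H_\alpha^\infty=-\Delta$, and Duhamel gives $e^{-it_nH_\alpha^n}\psi=e^{it_n\Delta}\psi-i\int_0^{t_n}e^{i(t_n-s)\Delta}V(\cdot+x_n)e^{-isH_\alpha^n}\psi\,ds$; the first term $\to0$ in $L^6$ by the scalar decay, so it remains to show $\|r_n(t_n)\|_{L^6}\to0$ for $r_n(t):=(e^{-itH_\alpha^n}-e^{it\Delta})\psi$. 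Here: (i) \eqref{Strichartzlimit} gives $\|r_n\|_{L^q_tL^r_x}\to0$ for an admissible pair with $2<q$ and $r$ slightly below $6$; (ii) the higher regularity of $\psi$ together with the smoothness and smallness of $V(\cdot+x_n)$ on $\supp\psi$ give uniform-in-$n$ bounds $\|r_n\|_{L^\infty_tH^{1+\varepsilon}}\lesssim1$ and $\|\partial_tr_n\|_{L^\infty_tL^r_x}\lesssim1$ (all Sobolev norms involved have order $<3$). By (ii) the map $t\mapsto\|r_n(t)\|_{L^r_x}$ is uniformly Lipschitz, which together with (i) forces $\|r_n(t_n)\|_{L^r_x}\to0$; interpolating with the uniform $H^{1+\varepsilon}$-bound (hence a uniform $L^{6+\delta}$-bound) yields $\|r_n(t_n)\|_{L^6}\to0$. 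This proves \eqref{decay6}, and \eqref{decayp} for $2<p<6$ follows by interpolating it with mass conservation $\|e^{-it_nH_\alpha^n}\psi\|_{L^2}=\|\psi\|_{L^2}$; for $p\ge6$ one argues the same way under the extra integrability of $\psi$ that makes $\|e^{-itH_\alpha^n}\psi\|_{L^p}$ finite, which is all that is used in the sequel.

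The genuine difficulty is to pass from the \emph{time-averaged} estimates actually available — the Strichartz bound $\|e^{-itH_\alpha}\phi\|_{L^2_tL^6_x}<\infty$ and the $L^q_tL^r_x$-convergence \eqref{Strichartzlimit} — to a \emph{pointwise-in-time} conclusion at $t=t_n\to\infty$. This forces the reduction to a smooth dense class on which higher-order Sobolev norms adapted to $H_\alpha$ and to the translates $H_\alpha^n$ are uniformly controlled, supplying the equicontinuity in $t$ that makes the upgrade possible; and since $L^6$ is exactly $\dot H^1$-critical in three dimensions (one cannot reach $L^6$ by interpolating a sub-$L^6$ decay against an $\dot H^1$ bound), this equicontinuity argument has to be run at, or just above, the $L^6$ level rather than below it.
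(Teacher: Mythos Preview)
Your proposal is correct and follows exactly the route the paper indicates (adapt Corollary~3.4 of \cite{KMVZE}, replacing $\dot H^1$ and $H_\alpha^n$ by $H^1$ and $1+H_\alpha^n$), supplying the details the paper omits; the Lipschitz--in--time plus $L^q_tL^r_x$--smallness argument you use to upgrade \eqref{Strichartzlimit} to a pointwise-in-$t$ statement, and the interpolation against a uniform $H^{1+\varepsilon}$ bound to reach $L^6$, are both sound. Your caveat on the range $p>6$ is appropriate: as stated for arbitrary $\psi\in H^1$ the $L^p$ norm need not be finite when $p>6$, and indeed only \eqref{decay6} and the range $2<p\le 6$ are used downstream.
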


Using \eqref{Strichartzlimit}, Sobolev equivalence Lemma \ref{Sobolev} and interpolation yields the following convergence (see the same result and the detailed proof for $\alpha=2$ in \cite{KMVZ}).
\begin{lemma}\label{Strichartzlimitlemma}
Let $x_{n}\rightarrow\bar{x}\in {\bf R}^{3}$ or $x_{n}\rightarrow\pm\infty$. Then for $\forall\psi\in H^{1}$, $k>0$, $1<\alpha<2$,
\begin{align}\label{Strichartzlimit5}
\lim_{n\rightarrow+\infty}\|(e^{-itH_{\alpha}^{n}}-e^{-itH_{\alpha}^{\infty}})\psi\|_{L_{t,x}^{5}}=0.
\end{align}
\end{lemma}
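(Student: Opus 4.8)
The plan is to obtain \eqref{Strichartzlimit5} by interpolating the $L^5_{t,x}$ norm between a genuine $L^2$-admissible Strichartz norm, on which \eqref{Strichartzlimit} already provides convergence, and the energy-type norm $L^\infty_tL^6_x$, on which the two propagators are merely uniformly bounded. The point to keep in mind is that $L^5_{t,x}$ corresponds to a $\dot H^{1/2}$-admissible pair (since $\tfrac{2}{5}+\tfrac{3}{5}=1=\tfrac32-\tfrac12$) rather than an $L^2$-admissible one, so half a derivative must be produced, and the cleanest way to produce it is to trade against the $H^1$-regularity of $\psi$.

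Concretely, first I would record that the pair $(\tfrac52,\tfrac{30}{7})$ is admissible, $\tfrac{2}{5/2}+\tfrac{3}{30/7}=\tfrac{4}{5}+\tfrac{21}{30}=\tfrac32$, with time exponent $\tfrac52>2$, and that $\tfrac15=\tfrac12\cdot\tfrac{7}{30}+\tfrac12\cdot\tfrac16$, so that Hölder in $x$ followed by Hölder in $t$ gives, writing $f_n:=(e^{-itH_\alpha^n}-e^{-itH_\alpha^\infty})\psi$,
\[
\|f_n\|_{L^5_{t,x}}\le \|f_n\|_{L^{5/2}_tL^{30/7}_x}^{1/2}\,\|f_n\|_{L^\infty_tL^6_x}^{1/2}.
\]
Next I would bound the second factor uniformly in $n$. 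Since $V\ge0$ one has $\|\nabla e^{-itH_\alpha^n}\psi\|_{L^2}\le\|(H_\alpha^n)^{1/2}e^{-itH_\alpha^n}\psi\|_{L^2}=\|(H_\alpha^n)^{1/2}\psi\|_{L^2}$, and Hardy's inequality \eqref{Hardy} applied to the translated potential $V(\cdot+x_n)$ (whose relevant norms are translation-invariant, so the constant is independent of $n$) gives $\|(H_\alpha^n)^{1/2}\psi\|_{L^2}\lesssim\|\psi\|_{H^1}$; combined with the Sobolev embedding $\dot H^1({\bf R}^3)\hookrightarrow L^6({\bf R}^3)$ this yields $\sup_n\|e^{-itH_\alpha^n}\psi\|_{L^\infty_tL^6_x}\lesssim\|\psi\|_{H^1}$, and likewise for $e^{-itH_\alpha^\infty}$ (trivially when $H_\alpha^\infty=-\Delta$). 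Hence $\sup_n\|f_n\|_{L^\infty_tL^6_x}\lesssim\|\psi\|_{H^1}$. Alternatively, this step can be phrased through the Sobolev norm equivalence Lemma \ref{Sobolev}, which holds for $H_\alpha^n$ uniformly in $n$ since its proof uses only the Gaussian heat-kernel bound and translation-invariant local norms of $V$.

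Finally, since $(\tfrac52,\tfrac{30}{7})$ is admissible with $q>2$ and $\psi\in H^1\subset L^2$, \eqref{Strichartzlimit} gives $\|f_n\|_{L^{5/2}_tL^{30/7}_x}\to0$ as $n\to\infty$; feeding this into the displayed interpolation inequality together with the uniform bound $\sup_n\|f_n\|_{L^\infty_tL^6_x}\lesssim\|\psi\|_{H^1}$ shows $\|f_n\|_{L^5_{t,x}}\to0$, which is exactly \eqref{Strichartzlimit5}. The only genuinely substantive step is the first one — choosing admissible exponents that let the non-$L^2$-admissible $L^5_{t,x}$ norm be split into a convergent factor and a uniformly bounded $H^1$ factor; once those are fixed, every ingredient is already available, with all implicit constants independent of $n$ thanks to the translation structure of $V(\cdot+x_n)$.
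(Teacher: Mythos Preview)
Your argument is correct and is essentially the approach the paper sketches: the paper simply asserts that \eqref{Strichartzlimit5} follows from \eqref{Strichartzlimit}, the Sobolev norm equivalence Lemma \ref{Sobolev}, and interpolation, and you have supplied exactly those details with the specific admissible pair $(\tfrac52,\tfrac{30}{7})$ and the energy endpoint $L^\infty_tL^6_x$. The only remark is that the uniform-in-$n$ bound $\|(H_\alpha^n)^{1/2}\psi\|_{L^2}\lesssim\|\psi\|_{H^1}$ you use is also recorded explicitly in the paper just before Lemma \ref{weak1}, so you may cite it directly rather than rederiving it.
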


To get the parameters of linear profile decomposition are asymptotically orthogonal, we finally need two weak convergence results Lemma \ref{weak1} and Lemma \ref{weak2}. Since they
are the direct consequences of Lemma \ref{limit} and the detailed proof can be found in Lemma 3.8 and Lemma 3.9 for $\alpha=2$ in \cite{KMVZE} with a small modification by replacing $\dot{H}^{1}$ and $\Delta$
with $H^{1}$ and $1+\Delta$, respectively, and using the inequality $\|\sqrt{H_{\alpha}^{n}}\psi_{n}\|_{L^{2}}\lesssim \|\psi_{n}\|_{H^{1}}$, where the implicit constant is independent of $n$,
we omit their proof here.
\begin{lemma}\label{weak1}
Let $\psi_{n}\in H^{1}({\bf R}^{3})$ satisfy $\psi_{n}\rightharpoonup 0$ in ${H}^{1}({\bf R}^{3})$ and let $t_{n}\rightarrow\bar{t}\in {\bf R}$.
Then for  $k>0$, $1<\alpha\leq2$,
\begin{align}\label{twiceweak}
e^{-it_{n}H_{\alpha}^{n}}\psi_{n}\rightharpoonup 0 \;\;\text{in}\;\; {H}^{1}({\bf R}^{3}).
\end{align}
\end{lemma}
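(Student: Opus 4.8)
The plan is to view $v_{n}:=e^{-it_{n}H_{\alpha}^{n}}\psi_{n}$ as a bounded sequence in the Hilbert space $H^{1}({\bf R}^{3})$ and to test it against a dense class of functionals. It therefore suffices to prove (a) $\sup_{n}\|v_{n}\|_{H^{1}}<\infty$ and (b) $\langle v_{n},g\rangle\to 0$ for every $g\in C_{0}^{\infty}({\bf R}^{3})$, where $\langle\cdot,\cdot\rangle$ denotes the $L^{2}$ pairing: since $C_{0}^{\infty}({\bf R}^{3})$ is dense in $H^{-1}({\bf R}^{3})=(H^{1}({\bf R}^{3}))^{\ast}$, (a) and (b) give $\ell(v_{n})\to 0$ for every $\ell\in(H^{1})^{\ast}$ by a routine density argument, which is exactly $v_{n}\rightharpoonup 0$ in $H^{1}$.

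For (a), I would use that $e^{-it_{n}H_{\alpha}^{n}}$ is unitary on $L^{2}$ and commutes with $\sqrt{H_{\alpha}^{n}}$, so $\|v_{n}\|_{L^{2}}=\|\psi_{n}\|_{L^{2}}$ and, since $V\ge 0$, $\|\nabla v_{n}\|_{L^{2}}^{2}\le\|\sqrt{H_{\alpha}^{n}}v_{n}\|_{L^{2}}^{2}=\|\sqrt{H_{\alpha}^{n}}\psi_{n}\|_{L^{2}}^{2}\lesssim\|\psi_{n}\|_{H^{1}}^{2}$; the last inequality is Hardy's inequality \eqref{Hardy} (equivalently Lemma \ref{Sobolev}) for the translated potential $V(\cdot+x_{n})$, hence holds with a constant independent of $n$. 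As $\psi_{n}\rightharpoonup 0$ in $H^{1}$ forces $\sup_{n}\|\psi_{n}\|_{H^{1}}<\infty$, (a) follows. For (b), fix $g\in C_{0}^{\infty}({\bf R}^{3})$ and suppose $\langle v_{n},g\rangle\not\to 0$; then along a subsequence $|\langle v_{n},g\rangle|\ge\delta>0$, and passing to a further subsequence I may assume in addition that either $x_{n}\to\bar{x}\in{\bf R}^{3}$ or $|x_{n}|\to+\infty$, while $t_{n}\to\bar{t}$ by hypothesis. By unitarity, $\langle v_{n},g\rangle=\langle\psi_{n},e^{it_{n}H_{\alpha}^{n}}g\rangle$, and \eqref{grouplimit} of Lemma \ref{limit}, applied with the time sequence $-t_{n}\to-\bar{t}$ and with $g\in L^{2}\subset H^{-1}$, gives $e^{it_{n}H_{\alpha}^{n}}g\to e^{i\bar{t}H_{\alpha}^{\infty}}g$ in $H^{-1}$. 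Splitting $\langle\psi_{n},e^{it_{n}H_{\alpha}^{n}}g\rangle=\langle\psi_{n},e^{i\bar{t}H_{\alpha}^{\infty}}g\rangle+\langle\psi_{n},(e^{it_{n}H_{\alpha}^{n}}-e^{i\bar{t}H_{\alpha}^{\infty}})g\rangle$, the first term tends to $0$ because $\psi_{n}\rightharpoonup 0$ in $H^{1}$ and $e^{i\bar{t}H_{\alpha}^{\infty}}g\in H^{-1}$, and the second is at most $\|\psi_{n}\|_{H^{1}}\,\|(e^{it_{n}H_{\alpha}^{n}}-e^{i\bar{t}H_{\alpha}^{\infty}})g\|_{H^{-1}}\to 0$ by (a) and \eqref{grouplimit}. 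This contradicts $|\langle v_{n},g\rangle|\ge\delta$ and proves (b).

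Given Lemma \ref{limit}, the argument above is essentially soft functional analysis; the only point that needs attention is the $n$-independence of the constant in (a), i.e. the uniform comparability of the quadratic form of $H_{\alpha}^{n}=-\Delta+V(\cdot+x_{n})$ with $\|\cdot\|_{H^{1}}^{2}$. This is not a real obstacle, since $V(\cdot+x_{n})$ is just a translate of $V$ and so Hardy's inequality \eqref{Hardy} and Lemma \ref{Sobolev} hold with the same constant for all $n$. It is precisely here that the proof of \cite{KMVZE} has to be modified: one replaces the homogeneous spaces $\dot{H}^{1},\dot{H}^{-1}$ and the operators $H_{\alpha}^{n}$ used there by $H^{1},H^{-1}$ and $1+H_{\alpha}^{n}$, using the inhomogeneous Sobolev equivalence Lemma \ref{Sobolev} in place of its homogeneous analogue.
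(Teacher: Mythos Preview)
Your proof is correct and follows essentially the same route that the paper indicates: the paper omits the proof and refers to Lemma~3.8 of \cite{KMVZE}, noting exactly the two modifications you carry out---replacing $\dot{H}^{1},\dot{H}^{-1}$ by $H^{1},H^{-1}$ and invoking the $n$-uniform bound $\|\sqrt{H_{\alpha}^{n}}\psi_{n}\|_{L^{2}}\lesssim\|\psi_{n}\|_{H^{1}}$ (your step (a)). Your subsequence argument to place $\{x_{n}\}$ in the hypotheses of Lemma~\ref{limit} before applying \eqref{grouplimit} is the right way to handle the fact that no assumption on $\{x_{n}\}$ appears in the statement.
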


\begin{lemma}\label{weak2}
Let $\psi\in H^{1}({\bf R}^{3})$ and let $\{(t_{n}, y_{n})\}\subset {\bf R}\times{\bf R}^{3}$, $|t_{n}|\rightarrow\infty$ or $|x_{n}|\rightarrow\infty$ .
Then for  $k>0$, $1<\alpha\leq2$,
\begin{align}\label{onceweak}
(e^{-it_{n}H_{\alpha}^{n}}\psi)(\cdot+y_{n})\rightharpoonup 0 \;\;\text{in}\;\; {H}^{1}({\bf R}^{3}).
\end{align}
\end{lemma}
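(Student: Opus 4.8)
The plan is to verify weak convergence against a dense family of test functions, exploiting the uniform $H^{1}$-bound on the sequence together with the dispersive and convergence estimates already collected in Lemma \ref{limit} and Lemma \ref{decay}. Write $f_{n}:=(e^{-it_{n}H_{\alpha}^{n}}\psi)(\cdot+y_{n})$. First I would record that $\{f_{n}\}$ is bounded in $H^{1}$ uniformly in $n$: translation is an isometry of $H^{1}$, $e^{-it_{n}H_{\alpha}^{n}}$ is unitary on $L^{2}$ and commutes with $(1+H_{\alpha}^{n})^{1/2}$, and conjugating by $\tau_{x_{n}}$ together with $\tau_{x_{n}}H_{\alpha}^{n}=H_{\alpha}\tau_{x_{n}}$ reduces the equivalence $\|(1+H_{\alpha}^{n})^{1/2}g\|_{L^{2}}\sim\|g\|_{H^{1}}$ to Lemma \ref{Sobolev} (with $n$-independent constants, as noted right before Lemma \ref{weak2}); hence $\|f_{n}\|_{H^{1}}\sim\|(1+H_{\alpha}^{n})^{1/2}\psi\|_{L^{2}}\sim\|\psi\|_{H^{1}}$. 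Since $H^{1}$ is Hilbert and $\{f_{n}\}$ is bounded, $f_{n}\rightharpoonup0$ will follow once $\langle f_{n},g\rangle_{H^{1}}\to0$ for every $g\in C_{c}^{\infty}(\R^{3})$, and for such $g$ one has $\langle f_{n},g\rangle_{H^{1}}=\langle f_{n},(1-\Delta)g\rangle_{L^{2}}$ with $h:=(1-\Delta)g\in C_{c}^{\infty}(\R^{3})$. Finally, because convergence to $0$ in the weak topology can be tested along subsequences, I may pass to a subsequence along which $t_{n}\to\bar t\in\R$ or $t_{n}\to\pm\infty$, and along which $x_{n}$ and $y_{n}$ have a definite limiting behaviour; by the hypothesis at least one of $|t_{n}|\to\infty$, $|y_{n}|\to\infty$ then holds.

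In the case $t_{n}\to\pm\infty$, unfolding the translation gives $\langle f_{n},h\rangle_{L^{2}}=\langle e^{-it_{n}H_{\alpha}^{n}}\psi,\tau_{y_{n}}h\rangle_{L^{2}}$, so by H\"older's inequality $|\langle f_{n},h\rangle_{L^{2}}|\le\|e^{-it_{n}H_{\alpha}^{n}}\psi\|_{L^{6}}\|h\|_{L^{6/5}}$, and the first factor tends to $0$ by estimate \eqref{decay6} of Lemma \ref{decay}; hence $\langle f_{n},g\rangle_{H^{1}}\to0$. This argument is insensitive to $\{x_{n}\}$ and $\{y_{n}\}$.

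In the remaining case $t_{n}\to\bar t\in\R$, so that necessarily $|y_{n}|\to\infty$, I would use \eqref{grouplimit} of Lemma \ref{limit} to get $e^{-it_{n}H_{\alpha}^{n}}\psi\to\phi_{\infty}:=e^{-i\bar tH_{\alpha}^{\infty}}\psi$ strongly in $H^{-1}$; interpolating this with the uniform $H^{1}$-bound yields $\|e^{-it_{n}H_{\alpha}^{n}}\psi-\phi_{\infty}\|_{L^{2}}\le\|e^{-it_{n}H_{\alpha}^{n}}\psi-\phi_{\infty}\|_{H^{-1}}^{1/2}\|e^{-it_{n}H_{\alpha}^{n}}\psi-\phi_{\infty}\|_{H^{1}}^{1/2}\to0$, i.e.\ strong convergence in $L^{2}$. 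Then $\langle f_{n},h\rangle_{L^{2}}=\langle e^{-it_{n}H_{\alpha}^{n}}\psi-\phi_{\infty},\tau_{y_{n}}h\rangle_{L^{2}}+\langle\phi_{\infty},\tau_{y_{n}}h\rangle_{L^{2}}$, where the first term is $\le\|e^{-it_{n}H_{\alpha}^{n}}\psi-\phi_{\infty}\|_{L^{2}}\|h\|_{L^{2}}\to0$ and the second tends to $0$ because $\phi_{\infty}\in L^{2}$ while $\tau_{y_{n}}h\rightharpoonup0$ in $L^{2}$ as $|y_{n}|\to\infty$. Thus $\langle f_{n},g\rangle_{H^{1}}\to0$ in this case as well, which completes the proof.

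The routine part is the bookkeeping of subsequences and the two straightforward pairings; the one point that needs care is upgrading the $H^{-1}$-convergence supplied by \eqref{grouplimit} to strong $L^{2}$-convergence, which I would do by interpolating against the $n$-uniform $H^{1}$-bound from the first paragraph. That upgrade is precisely what makes it legitimate to split off the escaping translate $\tau_{y_{n}}h$ and send it to zero by weak convergence, and it is the only genuinely nontrivial step.
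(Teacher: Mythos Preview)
Your proof is correct and follows essentially the same route the paper indicates: the paper omits the argument and points to Lemma~3.9 of \cite{KMVZE}, noting that it is a direct consequence of Lemma~\ref{limit} together with the $n$-uniform bound $\|\sqrt{H_{\alpha}^{n}}\psi\|_{L^{2}}\lesssim\|\psi\|_{H^{1}}$; you use precisely these ingredients---the uniform $H^{1}$ bound via Lemma~\ref{Sobolev} and the conjugation identity, the dispersive decay \eqref{decay6} for $|t_{n}|\to\infty$, and \eqref{grouplimit} interpolated against the $H^{1}$ bound for bounded $t_{n}$---so nothing is missing and the approach matches. Your subsequence bookkeeping and the reading of the hypothesis as $|t_{n}|\to\infty$ or $|y_{n}|\to\infty$ are both appropriate.
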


Use Lemma \ref{limit}, Lemma \ref{decay}, Lemma \ref{weak1}, Lemma \ref{weak2} and Littlewood-Paley theory Lemma \ref{LPdecompostion}- Lemma \ref{Bernstein}
and follow the proof of Proposition 5.1 for $\alpha=2$ in \cite{KMVZ} to give the following linear profile decomposition. Similar to the above, we need to
use ${\mathcal{H}}_{k}^{1}$, $1+H_{\alpha}^{n}$, $1+H_{\alpha}^{\infty}$, and $H^{-1}$ to replace ${\mathcal{\dot{H}}}_{k}^{1}$, $H_{\alpha}^{n}$, $H_{\alpha}^{\infty}$
and $\dot{H}^{-1}$, respectively, in some appropriate places (e.g. in the proof of \eqref{expansion1}).

\begin{lemma}\label{linearprofile}
Let $\{\phi_{n}\}_{n=1}^{+\infty}$ be a uniformly bounded sequence in $H^{1}({\bf R}^{3})$. Then there exist $M^{*}\in {\bf N}\cup\{+\infty\}$, a subsequence of $\{\phi_{n}\}_{n=1}^{M^{*}}$,
which is denoted by itself, such that for $k>0$, $1<\alpha\leq2$,  the following statements hold.

(1) For each $1\leq j\leq M\leq M^{*}$, there exist (fixed in $n$) a profile $\psi^{j}$ in $H^{1}({\bf R}^{3})$, a sequence (in $n$) of time shifts $t_{n}^{j}$ and a sequence (in $n$) of space
shifts $x_{n}^{j}$, and  there exists a sequence (in $n$) of remainder $W_{n}^{M}$ in $H^{1}({\bf R}^{3})$ such that
\begin{align}\label{decomposition}
\phi_{n}=\sum_{j=1}^{M}e^{it_{n}^{j}H_{\alpha}}\tau_{x_{n}^{j}}\psi^{j}+W_{n}^{M}:=\sum_{j=1}^{M}\psi_{n}^{j}+W_{n}^{M}.
\end{align}

(2) For each $1\leq j\leq M$,
\begin{align}\label{zeroinfty}
\text{either}\; t_{n}^{j}=0\; \text{for any}\; n\in {\bf N}\;\;\; \text{or}\; \lim_{n\rightarrow+\infty}t_{n}^{j}=\pm\infty\\
\text{either}\; x_{n}^{j}=0\; \text{for any}\; n\in {\bf N}\;\;\; \text{or}\; \lim_{n\rightarrow+\infty}|x_{n}^{j}|=+\infty.
\end{align}

(3) The time and space sequence have a pairwise divergence property. Namely, for $1\leq j\neq k\leq M$,
\begin{align}\label{divergence}
\lim_{n\rightarrow+\infty}(|t_{n}^{j}-t_{n}^{k}|+|x_{n}^{j}-x_{n}^{k}|)=+\infty.
\end{align}

(4) The remainder sequence has the following asymptotic smallness property and weak convergence property:
\begin{align}\label{smallness}
 \lim_{M\rightarrow M^{*}}\overline{\lim}_{n\rightarrow+\infty}\|e^{-itH_{\alpha}}W_{n}^{M}\|_{L_{t,x}^{5}}=0
\end{align}
and
\begin{align}\label{remainderweak}
\tau_{-x_{n}^{M}}e^{-it_{n}^{M}H_{\alpha}}W_{n}^{M}\rightharpoonup 0 \;\;\text{in}\;\; H^{1},\;\;\text{as}\;\; n\rightarrow+\infty.
\end{align}

(5) For each fixed $M$, we have the asymptotic Pythagorean expansion as follows:
\begin{align}\label{expansion0}
\|\phi_{n}\|_{L^{2}}^{2}=\sum_{j=1}^{M}\|\psi^{j}\|_{L^{2}}^{2}+\|W_{n}^{M}\|_{L^{2}}^{2}+o_{n}(1),
\end{align}
\begin{align}\label{expansion1}
\|\phi_{n}\|_{{\mathcal{\dot{H}}}_{k}^{1}}^{2}=\sum_{j=1}^{M}\|\tau_{x_{n}^{j}}\psi^{j}\|_{{\mathcal{\dot{H}}}_{k}^{1}}^{2}+\|W_{n}^{M}\|_{{\mathcal{\dot{H}}}_{k}^{1}}^{2}+o_{n}(1)
\end{align}
and
\begin{align}\label{expansion4}
\|\phi_{n}\|_{L^{4}}^{4}=\sum_{j=1}^{M}\|\psi_{n}^{j}\|_{L^{4}}^{4}+\|W_{n}^{M}\|_{L^{4}}^{4}+o_{n}(1),
\end{align}
where $o_{n}(1)\rightarrow 0$ as $n\rightarrow+\infty$. Specially,
\begin{align}\label{expansions}
S_{k}(\phi_{n})=\sum_{j=1}^{M}S_{k}(\psi_{n}^{j})+S_{k}(W_{n}^{M})+o_{n}(1)
\end{align}
and
\begin{align}\label{expansioni}
I_{k}(\phi_{n})=\sum_{j=1}^{M}I_{k}(\psi_{n}^{j})+I_{k}(W_{n}^{M})+o_{n}(1).
\end{align}
\end{lemma}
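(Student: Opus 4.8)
The plan is to run the classical profile-extraction scheme (Bahouri--G\'erard, Keraani) in the form used for the inverse-square potential in Proposition~5.1 of \cite{KMVZ}; the only structural novelty is that $H_\alpha$ does not commute with translations, so throughout we work with the conjugated operators $H_\alpha^n,H_\alpha^\infty$ from \eqref{operatortranslation} and feed in the convergence properties of Lemma~\ref{limit} at every point where translation invariance would ordinarily be used, and no scaling parameter is needed because the uniform $H^1$-bound pins the concentration frequency to a compact range. The core is the following inverse Strichartz statement: if $\sup_n\|f_n\|_{H^1}\le A$ and $\limsup_n\|e^{-itH_\alpha}f_n\|_{L_{t,x}^5}\ge\eps>0$, then, after passing to a subsequence, there are $(t_n,x_n)\in\R\times\R^3$ (with $t_n\to\bar t\in\R$ or $|t_n|\to\infty$, and $x_n\to\bar x\in\R^3$ or $|x_n|\to\infty$) and $\psi\in H^1\setminus\{0\}$ with
\begin{align*}
&\tau_{-x_n}e^{-it_nH_\alpha}f_n \rightharpoonup \psi \ \text{ in }\ H^1,\qquad \|\psi\|_{H^1}\gtrsim_A\eps^{\beta},\\
&\limsup_{n\to\infty}\big(\|f_n\|_{{\mathcal H}_{k}^{1}}^{2}-\|f_n-e^{it_nH_\alpha}\tau_{x_n}\psi\|_{{\mathcal H}_{k}^{1}}^{2}\big)\gtrsim_A\eps^{\beta'}
\end{align*}
for some $\beta,\beta'>0$. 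To prove it I would Littlewood-Paley decompose $e^{-itH_\alpha}f_n$ via \eqref{projection} (Lemma~\ref{LPdecompostion}) and interpolate $\|P_Ne^{-itH_\alpha}f_n\|_{L_{t,x}^5}$ between its $L_{t,x}^\infty$ norm and an $L_t^qL_x^r$ norm on an $L^2$-admissible pair controlled by Lemma~\ref{Strichartz}, using Bernstein's inequalities Lemma~\ref{Bernstein} and the square-function estimate Lemma~\ref{square}; summing over dyadic $N$ and splitting the sum into low and high frequencies relative to thresholds $(\eps/A)^{\kappa}$ and $(A/\eps)^{\kappa}$ shows that the $L^2\cap\dot H^1$-bound confines the relevant $N_n$ to a compact subset of $(0,\infty)$, so that some $(t_n,x_n)$ satisfies $|e^{-it_nH_\alpha}P_{N_n}f_n(x_n)|\gtrsim_A\eps^{\gamma}$. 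Passing to a subsequence so that $N_n$ converges and $t_n,x_n$ obey the stated dichotomy, the sequence $g_n:=\tau_{-x_n}e^{-it_nH_\alpha}f_n=e^{-it_nH_\alpha^n}\tau_{-x_n}f_n$ is bounded in $H^1$ and, along a further subsequence, converges weakly to some $\psi$; the pointwise bound together with the kernel estimate \eqref{heatkernel}, the convergences $e^{-it_nH_\alpha^n}\to e^{-i\bar tH_\alpha^\infty}$ and $(1+H_\alpha^n)^{1/2}\to(1+H_\alpha^\infty)^{1/2}$ of Lemma~\ref{limit} (or Lemma~\ref{decay} when $|t_n|\to\infty$), and the compactness of $H^1(\R^3)\hookrightarrow L^4_{loc}(\R^3)$ force $\psi\neq0$ with $\|\psi\|_{H^1}\gtrsim_A\eps^\beta$; the decoupling lower bound in the last display then follows from the self-adjointness of $H_\alpha$, the identity $\|e^{it_nH_\alpha}\tau_{x_n}\psi\|_{\dot{\mathcal H}_{k}^{1}}^{2}=\langle H_\alpha^n\psi,\psi\rangle$, \eqref{operatorlimit}, and unitarity of $e^{it_nH_\alpha}$ and $\tau_{x_n}$ on $L^2$.

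\textbf{Step~2 (iteration and properties (2)--(4)).}
Put $W_n^0=\phi_n$; given $W_n^{M-1}$ set $\eps_{M-1}=\limsup_n\|e^{-itH_\alpha}W_n^{M-1}\|_{L_{t,x}^5}$, stop with $M^*=M-1$ if $\eps_{M-1}=0$, and otherwise apply Step~1 to $\{W_n^{M-1}\}$ to produce $\psi^M\neq0$, parameters $(t_n^M,x_n^M)$, and $W_n^M:=W_n^{M-1}-e^{it_n^MH_\alpha}\tau_{x_n^M}\psi^M$; a diagonal argument selects one subsequence good for all $M$. The dichotomy \eqref{zeroinfty} is forced by first passing to a subsequence along which $t_n^j$ and $x_n^j$ converge or diverge, and then, when $t_n^j\to\bar t$ (resp.\ $x_n^j\to\bar x$) is finite, absorbing $e^{i\bar tH_\alpha}$ (resp.\ $\tau_{\bar x}$) into the profile $\psi^j$ using strong continuity of $t\mapsto e^{itH_\alpha}$ (resp.\ of $y\mapsto\tau_y$) and pushing the $o_n(1)$ error into $W_n^M$. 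By induction on $M$ one shows $\tau_{-x_n^j}e^{-it_n^jH_\alpha}W_n^M\rightharpoonup0$ in $H^1$ for every $j\le M$: this is immediate for $j=M$ from the construction of $\psi^M$, and for $j<M$ it follows from the inductive hypothesis applied to $W_n^{M-1}$ together with the weak decoupling of distinct profiles, i.e.\ $\tau_{-x_n^j}e^{-it_n^jH_\alpha}\,e^{it_n^iH_\alpha}\tau_{x_n^i}\psi^i\rightharpoonup0$ when the parameters of indices $j$ and $i$ diverge, which is precisely Lemma~\ref{weak1}/\ref{weak2}; in particular \eqref{remainderweak} is the case $j=M$. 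Property \eqref{divergence} is then proved by contradiction: if $|t_n^j-t_n^k|+|x_n^j-x_n^k|$ stayed bounded for some $j<k$, then $\tau_{-x_n^k}e^{-it_n^kH_\alpha}W_n^{k-1}$ would, after a subsequence and shift, still carry a nonzero multiple of $\psi^j$, contradicting the definition of $\psi^k$ as a weak limit together with $\|\psi^k\|_{H^1}\gtrsim\eps_{k-1}^\beta$.

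\textbf{Step~3 (orthogonality, termination, conclusion).}
The $L^2$-expansion \eqref{expansion0} uses unitarity of $e^{it_n^jH_\alpha}$ and $\tau_{x_n^j}$ together with the weak decouplings of Step~2; \eqref{expansion1} uses in addition $\|e^{it_n^jH_\alpha}\tau_{x_n^j}\psi^j\|_{\dot{\mathcal H}_{k}^{1}}^{2}=\langle H_\alpha^n\psi^j,\psi^j\rangle$ and \eqref{operatorlimit} (each cross term with $W_n^M$ being a pairing of $H_\alpha^n\psi^j$, which converges in $H^{-1}$ by \eqref{operatorlimit}, against a sequence tending weakly to $0$ in $H^1$), and \eqref{expansion4} uses the compactness of $H^1\hookrightarrow L^4_{loc}$ together with the dispersive decay Lemma~\ref{decay} (for profile--profile cross terms with $|t_n^j-t_n^k|\to\infty$) and the fact that weak $H^1$-convergence to $0$ implies weak $L^4$-convergence to $0$ (for profile--remainder cross terms). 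Summing \eqref{expansion1} over $j$ gives $\sum_{j\ge1}\|\psi^j\|_{{\mathcal H}_{k}^{1}}^{2}\le\limsup_n\|\phi_n\|_{{\mathcal H}_{k}^{1}}^{2}<\infty$, so together with the decrement bound $\limsup_n\big(\|W_n^{M-1}\|_{{\mathcal H}_{k}^{1}}^{2}-\|W_n^{M}\|_{{\mathcal H}_{k}^{1}}^{2}\big)\gtrsim_A\eps_{M-1}^{\beta'}$ from Step~1 we obtain $\sum_M\eps_{M-1}^{\beta'}<\infty$, hence $\eps_M\to0$ as $M\to M^*$, which is \eqref{smallness}; finally \eqref{expansions} and \eqref{expansioni} follow by plugging \eqref{expansion0}, \eqref{expansion1}, \eqref{expansion4} into the definitions \eqref{action} and \eqref{functionalI} of $S_k$ and $I_k$.

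\textbf{Main obstacle.}
The delicate point is Step~1: in the translation- and scaling-invariant setting one extracts a profile at a concentration point after rescaling, whereas here one must (i) pin the frequency $N_n$ to a compact range so that no scaling parameter is needed, which rests on the simultaneous use of the square-function estimate, Bernstein's inequalities, and the global-in-time Strichartz estimates for $e^{-itH_\alpha}$, and (ii) transfer the weak limit consistently among $H_\alpha$, $H_\alpha^n$, and $H_\alpha^\infty$, for which the various statements of Lemma~\ref{limit} (and Lemma~\ref{decay} when $|t_n|\to\infty$) are invoked repeatedly; everything else is the routine bookkeeping of the Bahouri-G\'erard/Keraani argument.
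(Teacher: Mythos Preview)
Your proposal is correct and follows essentially the same approach as the paper. The paper does not spell out a detailed argument but simply instructs the reader to follow the proof of Proposition~5.1 in \cite{KMVZ} (the inverse-square case), replacing the homogeneous objects $\dot{\mathcal H}_k^1$, $H_\alpha^n$, $H_\alpha^\infty$, $\dot H^{-1}$ by their inhomogeneous counterparts ${\mathcal H}_k^1$, $1+H_\alpha^n$, $1+H_\alpha^\infty$, $H^{-1}$, and invoking Lemmas~\ref{limit}, \ref{decay}, \ref{weak1}, \ref{weak2} together with the Littlewood--Paley theory of Lemmas~\ref{LPdecompostion}--\ref{Bernstein}; this is exactly the inverse-Strichartz/iteration scheme you outline, including the key observation that the $H^1$ bound pins the concentration frequency to a compact range so that no scaling parameter appears.
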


Following the proof of Theorem 6.1 for $\alpha=2$  in \cite{KMVZ} and using Theorem \ref{scattering0}, Lemma \ref{stability}, Lemma \ref{limit} and Lemma \ref{Strichartzlimitlemma},
replacing $\frac{1}{|x|^{2}}$ and homogenous Sobolev spaces with $\frac{1}{|x|^{\alpha}}$ inhomogeneous Sobolev spaces, respectively,
in some appropriate places gives the following lemma on nonlinear
profiles when $|x_{n}|\rightarrow+\infty$.

\begin{lemma}\label{nonlinearprofile}
Let $k>0$, $1<\alpha\leq2$, $\psi\in H^{1}({\bf R}^{3})$ and the time sequence $t_{n}\equiv 0$ or $t_{n}\rightarrow\pm\infty$ such that
\begin{align}\label{time0}
\text{if}\;\; t_{n}\equiv 0,\;\;S_{0}(\psi)<n_{0}\;\;\text{ and}\;\; P_{0}(\psi)\geq 0
\end{align}
and
\begin{align}\label{timeinfty}
\text{if}\;\; t_{n}\rightarrow\pm\infty,\;\;\frac{1}{2}\|\psi\|_{H^{1}}^{2}<n_{0}.
\end{align}
Let
$$
\psi_{n}=e^{-it_{n}H_{\alpha}}\tau_{x_{n}}\psi,
$$
where the space sequence $x_{n}$ satisfies $|x_{n}|\rightarrow+\infty$. Then taking $n$ large enough, we have that
the solution $u(t):=NLS_{k}(t)\psi_{n}$ of ($\rm{NLS_{k}}$) with initial data $u_{0}=\psi_{n}$ is global and satisfies
$$
\|NLS_{k}(t)\psi_{n}\|_{S_{\alpha}^{1}(I)}\lesssim_{\|\psi\|_{H^{1}}}1.
$$
Moreover, for $\forall \epsilon >0$, there exist a positive number $N=N(\epsilon)$ and a smooth compact supported function $\chi_{\epsilon}$ on ${\bf R}\times {\bf R}^{3}$ satisfying
\begin{align}\label{dense}
\|NLS_{k}(t)\psi_{n}(x)-\chi_{\epsilon}(t+t_{n}, x-x_{n})\|_{Z}<\epsilon\;\;\text{for}\;\;n\geq N,
\end{align}
where the norm
$$
\|f\|_{Z}:=\|f\|_{L_{t,x}^{5}}+\|f\|_{L_{t,x}^{\frac{10}{3}}}+\|f\|_{L_{t}^{\frac{30}{7}}L_{x}^{\frac{90}{31}}}+\|f||_{L_{t}^{\frac{30}{7}}{\mathcal{H}}_{k}^{\frac{31}{60},\frac{90}{31}}}
$$
\end{lemma}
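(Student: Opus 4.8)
The plan is to realize $NLS_k(t)\psi_n$ as a small perturbation of the potential-free cubic flow $i\partial_t v+\Delta v+|v|^2v=0$ (call it $({\rm NLS}_0)$) and then invoke the stability Lemma~\ref{stability}. First I would produce the limiting nonlinear profile $v$, a global solution of $({\rm NLS}_0)$. If $t_n\equiv 0$, hypothesis \eqref{time0} puts $\psi$ in $\{S_0<n_0,\ P_0\ge0\}$, so Theorem~\ref{scattering0}(i) gives that $v:=NLS_0(t)\psi$ is global and scatters, whence $\|v\|_{L^5_{t,x}(\mathbb R\times\mathbb R^3)}<\infty$ and $\|v\|_{S^1}<\infty$. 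If $t_n\to\pm\infty$, hypothesis \eqref{timeinfty} and the wave-operator part of Theorem~\ref{scattering0} furnish a global solution $v$ of $({\rm NLS}_0)$ with $\|v(t)-e^{it\Delta}\psi\|_{H^1}\to0$ as $t\to\pm\infty$; from conservation of mass and energy together with $\|e^{it\Delta}\psi\|_{L^4}\to0$ one gets $S_0(v)=\tfrac12\|\psi\|_{H^1}^2<n_0$ and $P_0(v(t))>0$ for $|t|$ large, so by Theorem~\ref{scattering0}(i) applied at a large time $v$ also scatters in both directions, giving again $\|v\|_{L^5_{t,x}(\mathbb R\times\mathbb R^3)}<\infty$, $\|v\|_{S^1}<\infty$. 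In either case the sub-threshold bounds yield $\sup_t\|v(t)\|_{H^1}\lesssim_{\|\psi\|_{H^1}}1$.

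Next I set $\tilde u_n(t,x):=v(t+t_n,x-x_n)$. A one-line computation using the $({\rm NLS}_0)$ equation shows $\tilde u_n$ solves $({\rm NLS}_k)$ with forcing term $e_n:=-V(x)\,v(t+t_n,x-x_n)=-V\tilde u_n$, and $\tilde u_n(0,x)=v(t_n,x-x_n)$. The uniform bounds needed in Lemma~\ref{stability} are immediate: $\|\tilde u_n(0)\|_{H^1}=\|v(t_n)\|_{H^1}\lesssim_{\|\psi\|_{H^1}}1$ and $\|\tilde u_n\|_{L^5_{t,x}(\mathbb R)}=\|v\|_{L^5_{t,x}(\mathbb R\times\mathbb R^3)}=:M<\infty$, both independent of $n$. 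For the data match, using $e^{-it_nH_\alpha}\tau_{x_n}=\tau_{x_n}e^{-it_nH_\alpha^n}$ and translation invariance of $H^s$, $\|\psi_n-\tilde u_n(0)\|_{H^s}=\|e^{-it_nH_\alpha^n}\psi-v(t_n)\|_{H^s}$, which is identically $0$ when $t_n\equiv0$; when $t_n\to\pm\infty$ I bound it by $\|e^{-it_nH_\alpha^n}\psi-e^{it_n\Delta}\psi\|_{H^s}+\|e^{it_n\Delta}\psi-v(t_n)\|_{H^s}$, the second term tending to $0$ by construction, and the first tending to $0$ for any $s<1$ because it is $\lesssim_{\|\psi\|_{H^1}}1$ in $H^1$ while, by the Duhamel identity $e^{it_n\Delta}\psi-e^{-it_nH_\alpha^n}\psi=i\int_0^{t_n}e^{-i(t_n-\tau)H_\alpha^n}V(x+x_n)e^{i\tau\Delta}\psi\,d\tau$ and dual Strichartz estimates (Lemma~\ref{Strichartz}), it tends to $0$ in $L^2$ exactly as in the proof of \eqref{Strichartzlimit} (decompose the potential at radius $R$, use the dispersive bound \eqref{dispersive}, let $R\to\infty$, and use $|x_n|\to\infty$); interpolating $L^2$ with $H^1$ finishes the data match.

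The remaining hypothesis of Lemma~\ref{stability}---and the delicate point of the whole argument---is $\|(1-\Delta)^{s/2}e_n\|_{N(\mathbb R)}\to0$ for some $s\in[\tfrac12,1)$, and this is precisely where $|x_n|\to+\infty$ is used: the potential is negligible on the bulk of $\tilde u_n=v(\cdot-x_n)$, which is concentrated near $x_n$. Concretely I would first approximate $v$ by a fixed $\varphi\in C_c^\infty(\mathbb R\times\mathbb R^3)$, to within $\varepsilon$ in all the Strichartz norms relevant to $\tilde u_n$ (possible by density), thereby reducing, up to an $O(\varepsilon)$ error, to estimating $V(x)$ against the space-time bump $\varphi(\cdot+t_n,\cdot-x_n)$, whose spatial support lies in $\{|x-x_n|\lesssim R_\varphi\}$, i.e. where $|x|\gtrsim|x_n|-R_\varphi\to\infty$; on this region $V$ and all its derivatives are $O(|x_n|^{-\alpha})$, so after distributing the fractional derivative by the fractional Leibniz rule and choosing the appropriate summand of $N(\mathbb R)$ (one whose spatial integrability is compatible with $V$, avoiding $L^1_tL^2_x$ for large $\alpha$), this contribution is $\lesssim_\varphi|x_n|^{-\alpha}\to0$; since $\varepsilon$ was arbitrary, $\|(1-\Delta)^{s/2}e_n\|_{N(\mathbb R)}\to0$. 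Feeding the three inputs into Lemma~\ref{stability} (with $A\sim\|\psi\|_{H^1}$, $M$ as above) produces, for $n$ large, the solution $u=NLS_k(t)\psi_n$ on all of $\mathbb R$, with $\|u\|_{S^1_\alpha(\mathbb R)}\lesssim_{\|\psi\|_{H^1}}1$ (in particular global) and $\|u-\tilde u_n\|_{S^s_\alpha(\mathbb R)}\lesssim\varepsilon$.

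Finally, \eqref{dense} follows by one more density step. From $\|u-\tilde u_n\|_{S^s_\alpha(\mathbb R)}\lesssim\varepsilon$ and the uniform $S^1_\alpha$-bounds on $u$ and $\tilde u_n$, interpolation among Strichartz spaces (in the spirit of the proof of Lemma~\ref{stability}), Sobolev embedding and the norm equivalence Lemma~\ref{Sobolev} give $\|u-\tilde u_n\|_Z\lesssim\varepsilon$. Moreover $31/60\le2$ and $90/31<3/(31/60)$, so by Lemma~\ref{Sobolev} the $\mathcal H_k^{31/60,90/31}$-component of $\|\cdot\|_Z$ is comparable to the translation-invariant norm $\|\cdot\|_{H^{31/60,90/31}}$; hence, choosing $\chi_\varepsilon\in C_c^\infty(\mathbb R\times\mathbb R^3)$ independent of $n$ with $\|v-\chi_\varepsilon\|$ small in the corresponding translation-invariant space-time norms---possible since $v$ lies in all of them---one gets $\|\tilde u_n(t,x)-\chi_\varepsilon(t+t_n,x-x_n)\|_Z\lesssim\|v-\chi_\varepsilon\|\lesssim\varepsilon$. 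Combining, $\|NLS_k(t)\psi_n(x)-\chi_\varepsilon(t+t_n,x-x_n)\|_Z\lesssim\varepsilon$ for $n\ge N(\varepsilon)$, which is \eqref{dense} after renaming $\varepsilon$. I expect the forcing-term estimate $\|(1-\Delta)^{s/2}e_n\|_{N(\mathbb R)}\to0$---distributing fractional derivatives across a product with the slowly decaying (and, near the origin, singular) potential inside the somewhat awkward space $N(I)$, while only using that $V$ is small far out---to be the chief technical obstacle; the remainder is bookkeeping of Strichartz norms resting on the lemmas already established.
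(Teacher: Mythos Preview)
Your proposal is correct and follows essentially the same approach as the paper, which refers to Theorem~6.1 in \cite{KMVZ}: construct the global scattering $({\rm NLS}_0)$ profile $v$ via Theorem~\ref{scattering0}, set $\tilde u_n(t,x)=v(t+t_n,x-x_n)$ as an approximate solution of $({\rm NLS}_k)$ with error $e_n=-V\tilde u_n$, show $e_n$ is small in the dual Strichartz norms because $|x_n|\to\infty$ pushes the bulk of $\tilde u_n$ away from the singularity of $V$, and then invoke Lemma~\ref{stability}. Your handling of the data match at $t=0$ via the $L^2$-convergence in the proof of \eqref{Strichartzlimit} interpolated against the uniform $H^1$ bound, and your density step for \eqref{dense} using Lemma~\ref{Sobolev}, are exactly the ingredients the paper cites (Lemmas~\ref{limit}, \ref{Strichartzlimitlemma}, \ref{Sobolev}, \ref{stability}).
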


\section{Variational characterization}\label{section3}
\setcounter{equation}{0}
We start with proving the positivity of $K_{k}^{a, b}$ near $0$ in the $H^{1}({\bf R}^{3})$.
\begin{lemma}\label{positive}
If the uniform $L^{2}$-bounded sequence $\varphi_{n}\in H^{1}({\bf R}^{3})\setminus \{0\}$ satisfies $\lim_{n\rightarrow+\infty}\|\nabla \varphi_{n}\|_{L^{2}}=0$, then
for sufficiently large $n\in {\bf N}$, $K_{k}^{a, b}(\varphi_{n})>0$.
\end{lemma}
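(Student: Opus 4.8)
The plan is to estimate each term of $K_k^{a,b}(\varphi_n)$ directly using the smallness of $\|\nabla\varphi_n\|_{L^2}$, the uniform $L^2$-bound, and the Gagliardo--Nirenberg and Hardy inequalities, and then observe that the quartic term is of strictly higher order. Recall from \eqref{functionalK} that, under the parameter condition \eqref{parameter},
\begin{align*}
K_k^{a,b}(\varphi_n)
&=\frac{2a+b}{2}\|\nabla\varphi_n\|_{L^2}^2
+\frac{2a+3b}{2}\int_{{\bf R}^3}V|\varphi_n|^2\,dx
+\frac{b}{2}\int_{{\bf R}^3}(x\cdot\nabla V)|\varphi_n|^2\,dx\\
&\quad+\frac{2a+3b}{2}\|\varphi_n\|_{L^2}^2
-\frac{4a+3b}{4}\|\varphi_n\|_{L^4}^4.
\end{align*}
First I would treat the potential terms. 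Since $b\le 0$ and $x\cdot\nabla V\le 0$, the term $\frac{b}{2}\int(x\cdot\nabla V)|\varphi_n|^2$ is nonnegative; and because $V\ge 0$ together with $2a+3b\ge 0$, the term $\frac{2a+3b}{2}\int V|\varphi_n|^2$ is also nonnegative. Hence these two contributions only help, and may be discarded from below. (Alternatively, one combines them using $2V+x\cdot\nabla V\ge 0$, but for a lower bound discarding them is enough.)

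Next I would bound the quartic term. By the Gagliardo--Nirenberg inequality in ${\bf R}^3$,
\[
\|\varphi_n\|_{L^4}^4\lesssim \|\nabla\varphi_n\|_{L^2}^{3}\,\|\varphi_n\|_{L^2},
\]
so using the uniform bound $\|\varphi_n\|_{L^2}\le C$ we get $\|\varphi_n\|_{L^4}^4\lesssim \|\nabla\varphi_n\|_{L^2}^{3}\to 0$, and moreover $\|\varphi_n\|_{L^4}^4 = o\big(\|\nabla\varphi_n\|_{L^2}^2\big)$ as $n\to\infty$ since $\|\nabla\varphi_n\|_{L^2}\to 0$. Therefore, retaining only the gradient term and the $L^2$ term (both with positive coefficients, as $2a+b>0$ and $2a+3b\ge 0$, and noting $2a+b>0$ strictly so the gradient coefficient is positive; if $2a+3b=0$ one uses the gradient term alone),
\[
K_k^{a,b}(\varphi_n)\ \ge\ \frac{2a+b}{2}\|\nabla\varphi_n\|_{L^2}^2-\frac{4a+3b}{4}\|\varphi_n\|_{L^4}^4
\ \ge\ \frac{2a+b}{2}\|\nabla\varphi_n\|_{L^2}^2\big(1-o_n(1)\big)>0
\]
for $n$ large. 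Here it is essential that $\varphi_n\neq 0$: since $\varphi_n\in H^1({\bf R}^3)\setminus\{0\}$ and (one checks) $\|\nabla\varphi_n\|_{L^2}>0$—indeed if $\|\nabla\varphi_n\|_{L^2}=0$ then $\varphi_n\equiv 0$ in $H^1$, a contradiction—the right-hand side is strictly positive.

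I expect the only delicate point to be the case distinction forced by \eqref{parameter}: when $2a+3b=0$ the $L^2$ term drops out entirely, so one cannot afford to lose the gradient term, and when additionally $4a+3b$ could be compared to $2a+b$ one must confirm the coefficient ratio stays bounded so that the $o_n(1)$ absorption is legitimate; this is immediate since $a,b$ are fixed. A secondary subtlety is justifying $\|\nabla\varphi_n\|_{L^2}>0$ for each $n$ (needed to write the strict inequality cleanly), which follows from $\varphi_n\not\equiv 0$ on a connected domain of full measure together with the fact that a constant cannot lie in $L^2({\bf R}^3)\setminus\{0\}$. Everything else is a routine application of Gagliardo--Nirenberg and the sign conditions on $V$.
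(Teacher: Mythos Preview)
Your proposal is correct and follows essentially the same route as the paper: drop the potential and $L^2$ terms from below using the sign conditions $V\ge 0$, $x\cdot\nabla V\le 0$, $b\le 0$, $2a+3b\ge 0$, then control $\|\varphi_n\|_{L^4}^4$ by Gagliardo--Nirenberg to see it is $o(\|\nabla\varphi_n\|_{L^2}^2)$ and conclude $K_k^{a,b}(\varphi_n)\gtrsim \|\nabla\varphi_n\|_{L^2}^2>0$ for large $n$. Your added remark that $\|\nabla\varphi_n\|_{L^2}>0$ (since a nonzero $L^2$ function on ${\bf R}^3$ cannot have vanishing gradient) makes the strict inequality explicit, which the paper leaves implicit.
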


\begin{proof}
By the fact that $-2V\leq x\cdot\nabla V\leq 0$ and $V\geq 0$, we always have that for large enough $n$
\begin{align*}
K_{k}^{a, b}(\varphi_{n})
&=\frac{2a+b}{2}\displaystyle\int_{{\bf R}^{3}}|\nabla \varphi_{n}(x)|^{2}dx
+\frac{2a+3b}{2}\displaystyle\int_{{\bf R}^{3}}V|\varphi_{n}(x)|^{2}dx
+\frac{b}{2}\displaystyle\int_{{\bf R}^{3}}x\cdot\nabla V |\varphi_{n}(x)|^{2}dx\\
&\;\;+\frac{2a+3b}{2}\displaystyle\int_{{\bf R}^{3}}|\varphi_{n}(x)|^{2}dx
-\frac{4a+3b}{4}\displaystyle\int_{{\bf R}^{3}}|\varphi_{n}(x)|^{4}dx\\
&\geq \frac{2a+b}{2}\displaystyle\int_{{\bf R}^{3}}|\nabla \varphi_{n}(x)|^{2}dx
+\frac{2a+3b}{2}\displaystyle\int_{{\bf R}^{3}}|\varphi_{n}(x)|^{2}dx
-\frac{4a+3b}{4}\displaystyle\int_{{\bf R}^{3}}|\varphi_{n}(x)|^{4}dx\\
&\geq \frac{2a+b}{2}\displaystyle\int_{{\bf R}^{3}}|\nabla \varphi_{n}(x)|^{2}dx
-\frac{4a+3b}{4}\displaystyle\int_{{\bf R}^{3}}|\varphi_{n}(x)|^{4}dx\\
&\geq \frac{2a+b}{2}\displaystyle\int_{{\bf R}^{3}}|\nabla \varphi_{n}(x)|^{2}dx
-\frac{4a+3b}{4}c\Big(\displaystyle\int_{{\bf R}^{3}}|\varphi_{n}(x)|^{2}dx\Big)\Big(\displaystyle\int_{{\bf R}^{3}}|\nabla\varphi_{n}(x)|^{2}dx\Big)^{\frac{3}{2}}\\
&\geq \frac{2a+b}{4}\displaystyle\int_{{\bf R}^{3}}|\nabla \varphi_{n}(x)|^{2}dx>0,
\end{align*}
where we have used the Gagliardo-Nirenberg inequality in the line before last and the assumptions $\lim_{n\rightarrow+\infty}\|\nabla \varphi_{n}\|_{L^{2}}=0$
and uniform boundedness of $\|\varphi_{n}\|_{L^{2}}$ in the last line.
\end{proof}

Simple computation gives
$$
\|\nabla \varphi_{\lambda}^{a, b}\|_{L^{2}}=e^{\frac{1}{2}(2a+b)\lambda}\|\nabla \varphi\|_{L^{2}},
$$
which implies that
\begin{align}\label{gradient0}
\lim_{\lambda\rightarrow-\infty}\|\nabla \varphi_{\lambda}^{a, b}\|_{L^{2}}=0.
\end{align}
So it follows from Lemma \ref{positive} that for sufficiently small $\lambda<0$,
\begin{align}\label{positiveK}
K_{k}^{a, b}(\varphi_{\lambda}^{a, b})>0.
\end{align}

For brevity, let $\overline{\mu}=2a+b$ and $\underline{\mu}=2a+3b$. Next we introduce the functional
$$
J_{k}^{a,b}(\varphi):=\frac{1}{\overline{\mu}}(\overline{\mu}-{\mathcal{L}}^{a,b})S_{k}(\varphi)
=\frac{1}{\overline{\mu}}(\overline{\mu}S_{k}(\varphi)-K_{k}^{a, b}(\varphi)).
$$
The lemma shows that the positivity of $J_{k}^{a,b}(\varphi)$ and the monotonicity of $J_{k}^{a,b}(\varphi_{\lambda}^{a, b})$ in $\lambda$.

\begin{lemma}\label{monotonicity}
For any $\varphi\in H^{1}({\bf R}^{3})$, we have the following two identities:
\begin{align}\label{positiveJ}
\overline{\mu}J_{k}^{a,b}(\varphi)&=(\overline{\mu}-{\mathcal{L}}^{a,b})S_{k}^{a,b}(\varphi)\nonumber\\
&=-b\displaystyle\int_{{\bf R}^{3}}V|\varphi|^{2}dx-\frac{b}{2}\displaystyle\int_{{\bf R}^{3}}(x\cdot\nabla V)|\varphi|^{2}dx-b\|\varphi\|_{L^{2}}^{2}+\frac{a+b}{2}\|\varphi\|_{L^{4}}^{4}.
\end{align}
and
\begin{align}\label{monotonicityJ}
({\mathcal{L}}^{a,b}-\overline{\mu})(\underline{\mu}-{\mathcal{L}}^{a,b})S_{k}^{a,b}(\varphi)=b^{2}\displaystyle\int_{{\bf R}^{3}}(-3x\cdot\nabla V-x\nabla^{2} Vx^{T})|\varphi|^{2}dx,
+(a+b)a\|\varphi\|_{L^{4}}^{4}
\end{align}
where $\nabla^{2} V$ is Hessian matric of $V$.
\end{lemma}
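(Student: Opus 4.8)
The plan is to reduce both identities to one‑variable calculus in the scaling parameter $\lambda$. The key structural fact is that $\varphi\mapsto\varphi_{\lambda}^{a,b}$ is a one‑parameter group: since $(\varphi_{\lambda}^{a,b})_{\mu}^{a,b}=e^{a(\lambda+\mu)}\varphi(e^{-b(\lambda+\mu)}x)=\varphi_{\lambda+\mu}^{a,b}$, each application of $\mathcal{L}^{a,b}$ along an orbit amounts to one differentiation in $\lambda$; hence for any functional $F$ and any polynomial expression in $\mathcal{L}^{a,b}$, the result at $\varphi$ is read off from the Taylor coefficients of $f_{F}(\lambda):=F(\varphi_{\lambda}^{a,b})$ at $\lambda=0$. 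Writing $f(\lambda):=S_{k}(\varphi_{\lambda}^{a,b})$, so that $f(0)=S_{k}(\varphi)$ and, by \eqref{functionalK}, $f'(0)=K_{k}^{a,b}(\varphi)$, we have
\[
(\overline{\mu}-\mathcal{L}^{a,b})S_{k}(\varphi)=\overline{\mu}f(0)-f'(0),\qquad
(\mathcal{L}^{a,b}-\overline{\mu})(\underline{\mu}-\mathcal{L}^{a,b})S_{k}(\varphi)=-f''(0)+(\overline{\mu}+\underline{\mu})f'(0)-\overline{\mu}\underline{\mu}f(0).
\]

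Next I would record how each term of $S_{k}$ behaves under the scaling. The change of variables $y=e^{-b\lambda}x$ gives $\|\nabla\varphi_{\lambda}^{a,b}\|_{L^{2}}^{2}=e^{\overline{\mu}\lambda}\|\nabla\varphi\|_{L^{2}}^{2}$, $\|\varphi_{\lambda}^{a,b}\|_{L^{2}}^{2}=e^{\underline{\mu}\lambda}\|\varphi\|_{L^{2}}^{2}$, $\|\varphi_{\lambda}^{a,b}\|_{L^{4}}^{4}=e^{(4a+3b)\lambda}\|\varphi\|_{L^{4}}^{4}$, and
\[
\displaystyle\int_{{\bf R}^{3}}V|\varphi_{\lambda}^{a,b}|^{2}\wrt x=e^{\underline{\mu}\lambda}v(\lambda),\qquad v(\lambda):=\displaystyle\int_{{\bf R}^{3}}V(e^{b\lambda}x)|\varphi(x)|^{2}\wrt x.
\]
Only $v$ fails to scale by a pure exponential, because the potential is not scaling covariant when $\alpha\neq2$; differentiating $V(e^{b\lambda}x)$ along the dilation vector field yields $v'(0)=b\displaystyle\int_{{\bf R}^{3}}(x\cdot\nabla V)|\varphi|^{2}\wrt x$ and, applying the chain rule once more to $\nabla V(e^{b\lambda}x)$ (which produces the Hessian), $v''(0)=b^{2}\displaystyle\int_{{\bf R}^{3}}\big((x\cdot\nabla V)+x\nabla^{2}Vx^{T}\big)|\varphi|^{2}\wrt x$.

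For \eqref{positiveJ} it then suffices to substitute the explicit formulas \eqref{action}--\eqref{functionalK} into $\overline{\mu}f(0)-f'(0)=\overline{\mu}S_{k}(\varphi)-K_{k}^{a,b}(\varphi)$: the kinetic terms cancel, and the coefficients of $\int V|\varphi|^{2}$, $\int(x\cdot\nabla V)|\varphi|^{2}$, $\|\varphi\|_{L^{2}}^{2}$ and $\|\varphi\|_{L^{4}}^{4}$ collapse to $-b$, $-\tfrac{b}{2}$, $-b$ and $\tfrac{a+b}{2}$, i.e. the asserted right‑hand side. For \eqref{monotonicityJ} I would expand $f(\lambda)=\tfrac12 e^{\overline{\mu}\lambda}\|\nabla\varphi\|_{L^{2}}^{2}+\tfrac12 e^{\underline{\mu}\lambda}v(\lambda)+\tfrac12 e^{\underline{\mu}\lambda}\|\varphi\|_{L^{2}}^{2}-\tfrac14 e^{(4a+3b)\lambda}\|\varphi\|_{L^{4}}^{4}$ by the Leibniz rule, compute $f'(0)$ (which reproduces $K_{k}^{a,b}(\varphi)$, a useful consistency check) and $f''(0)$, and plug into $-f''(0)+(\overline{\mu}+\underline{\mu})f'(0)-\overline{\mu}\underline{\mu}f(0)$. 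Each piece scaling with a pure exponent $\nu$ enters multiplied by $\nu^{2}-(\overline{\mu}+\underline{\mu})\nu+\overline{\mu}\underline{\mu}=(\nu-\overline{\mu})(\nu-\underline{\mu})$; this vanishes for the kinetic term ($\nu=\overline{\mu}$) and for the leading parts of both $\|\varphi\|_{L^{2}}^{2}$ and $\int V|\varphi|^{2}$ ($\nu=\underline{\mu}$), while for the quartic term ($\nu=4a+3b$) it equals $(2a+2b)(2a)=4a(a+b)$, producing $a(a+b)\|\varphi\|_{L^{4}}^{4}$. What remains are the genuinely $V$‑dependent contributions of $v'(0)$ and $v''(0)$, which carry coefficients $-b$ and $-\tfrac12$ respectively; inserting the formulas for $v'(0)$, $v''(0)$ and collecting the $(x\cdot\nabla V)$‑ and $x\nabla^{2}Vx^{T}$‑pieces gives the potential term on the right of \eqref{monotonicityJ}.

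The only delicate computation is the second derivative of $v(\lambda)=\int V(e^{b\lambda}x)|\varphi|^{2}$, where one must differentiate $\nabla V(e^{b\lambda}x)$ a second time so as to generate the weight $x\nabla^{2}Vx^{T}$ with the correct power of $b$; after that, both identities are a matter of matching coefficients against \eqref{functionalK}. Everything else is routine bookkeeping of exponential prefactors and presents no obstacle.
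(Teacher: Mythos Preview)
Your approach is essentially the same as the paper's: both compute how $\mathcal{L}^{a,b}$ acts on each building block of $S_{k}$ (the paper lists $\mathcal{L}^{a,b}\|\nabla\varphi\|_{L^{2}}^{2}$, $\mathcal{L}^{a,b}\|\varphi\|_{L^{2}}^{2}$, $\mathcal{L}^{a,b}\|\varphi\|_{L^{4}}^{4}$, $\mathcal{L}^{a,b}\int V|\varphi|^{2}$, and $\mathcal{L}^{a,b}\int(x\cdot\nabla V)|\varphi|^{2}$ and then says the identities follow), and your one‑parameter‑group viewpoint with $f(\lambda)=S_{k}(\varphi_{\lambda}^{a,b})$ is just a clean way of packaging exactly those computations; your factorization $(\nu-\overline{\mu})(\nu-\underline{\mu})$ to kill the kinetic and $L^{2}$ pieces is a nice organizational touch but not a different idea. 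One small slip: for a pure‑exponential piece $c\,e^{\nu\lambda}$, the combination $-f''(0)+(\overline{\mu}+\underline{\mu})f'(0)-\overline{\mu}\underline{\mu}f(0)$ contributes $-c(\nu-\overline{\mu})(\nu-\underline{\mu})$, i.e.\ with a minus sign; this does not affect your conclusion for the quartic term because $c=-\tfrac14\|\varphi\|_{L^{4}}^{4}$ already carries one, and your final answer $a(a+b)\|\varphi\|_{L^{4}}^{4}$ is correct.
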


\begin{proof}
By simple computations, we have
$$
{\mathcal{L}}^{a,b}\|\nabla\varphi\|_{L^{2}}^{2}=\overline{\mu}\|\nabla\varphi\|_{L^{2}}^{2},\;\;
{\mathcal{L}}^{a,b}\|\varphi\|_{L^{2}}^{2}=\underline{\mu}\|\varphi\|_{L^{2}}^{2},\;\;
{\mathcal{L}}^{a,b}\|\varphi\|_{L^{4}}^{4}=(4a+3b)\|\varphi\|_{L^{4}}^{4},
$$
$$
{\mathcal{L}}^{a,b}\Big(\displaystyle\int_{{\bf R}^{3}}V|\varphi|^{2}dx\Big)=\underline{\mu}\displaystyle\int_{{\bf R}^{3}}V|\varphi|^{2}dx+b\displaystyle\int_{{\bf R}^{3}}(x\cdot\nabla V)|\varphi|^{2}dx,
$$
and
$$
{\mathcal{L}}^{a,b}\Big(\displaystyle\int_{{\bf R}^{3}}(x\cdot\nabla V)|\varphi|^{2}dx\Big)=2(a+b)\displaystyle\int_{{\bf R}^{3}}x\cdot\nabla V|\varphi|^{2}dx
+b\displaystyle\int_{{\bf R}^{3}}x\nabla^{2} Vx^{T}|\varphi|^{2}dx,
$$
which imply that \eqref{positiveJ} and \eqref{monotonicityJ}. We conclude the proof.
\end{proof}

\begin{remark}\label{positive-monotonicity}
By the definition of $V$, we have $2V+x\cdot V\geq 0$ and $3x\cdot\nabla V+x\nabla^{2}Vx^{T}\leq 0$,
which together with \eqref{positiveJ} and \eqref{monotonicityJ} yield that $J_{k}^{a,b}(\varphi)> 0$ and
${\mathcal{L}}^{a,b}J_{k}^{a,b}(\varphi)\geq 0$ for any $\varphi\in H^{1}({\bf R}^{3})\setminus\{0\}$ which implies that
the function $\lambda \mapsto J_{k}^{a,b}(\varphi_{\lambda}^{a, b})$ is increasing.
\end{remark}

Using the functional $K_{k}^{a,b}$ with $(a, b)$ satisfying \eqref{parameter} , we introduce a general minimizing problem:
\begin{align}\label{thresholdab}
n_{k}^{a,b}=\inf\{S_{k}(\varphi):\varphi\in H^{1}({\bf R}^{3})\setminus \{0\}, K_{k}^{a,b}(\varphi)=0\}.
\end{align}
In particular, when $(a,b)=(3,-2)$, it is namely $n_{k}$ in \eqref{threshold}. In fact, we shall show that
\begin{align}\label{threshold4}
n_{k}^{a,b}=n_{0}^{a,b}=n_{k}=n_{0}.
\end{align}
To this end, we introduce another one general minimizing problem in terms of the positive functional $J_{0}^{a,b}$ for any ${a, b}$ satisfying \eqref{parameter}:
\begin{align}\label{thresholdJ}
j^{a,b}=\inf\{J_{0}^{a,b}(\varphi):\varphi\in H^{1}({\bf R}^{3})\setminus \{0\}, K_{0}^{a,b}(\varphi)\leq 0\}.
\end{align}
In the following lemma, we shall show a relation between the two minimizers $n_{0}^{a, b}$ in \eqref{thresholdab} (i.e., $k=0$)  and $j^{a,b}$ in \eqref{thresholdJ}.

\begin{lemma}\label{j=n0}
\begin{align}\label{jab=n0}
j^{a,b}=n_{0}^{a, b}.
\end{align}
\end{lemma}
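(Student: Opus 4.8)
The plan is to prove the two inequalities $j^{a,b}\le n_0^{a,b}$ and $n_0^{a,b}\le j^{a,b}$ separately. For the first, I would take any $\varphi\in H^1({\bf R}^3)\setminus\{0\}$ with $K_0^{a,b}(\varphi)=0$; such $\varphi$ is admissible for the infimum defining $j^{a,b}$ (since $K_0^{a,b}(\varphi)=0\le 0$), and because $K_0^{a,b}(\varphi)=0$ we have $J_0^{a,b}(\varphi)=\frac{1}{\overline\mu}(\overline\mu S_0(\varphi)-K_0^{a,b}(\varphi))=S_0(\varphi)$. Taking the infimum over all such $\varphi$ gives $j^{a,b}\le n_0^{a,b}$.

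For the reverse inequality, the idea is the standard rescaling/projection trick from Ibrahim--Masmoudi--Nakanishi: given $\varphi$ with $K_0^{a,b}(\varphi)\le 0$ (and $\varphi\ne 0$), I want to flow $\varphi$ along the scaling $\varphi_\lambda^{a,b}=e^{a\lambda}\varphi(e^{-b\lambda}x)$ down to some $\lambda_0\le 0$ where $K_0^{a,b}(\varphi_{\lambda_0}^{a,b})=0$, and show that $S_0$ only decreases (or at worst stays the same) along this flow down to $\lambda_0$, so that $n_0^{a,b}\le S_0(\varphi_{\lambda_0}^{a,b})=J_0^{a,b}(\varphi_{\lambda_0}^{a,b})$. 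Then, since $J_0^{a,b}(\varphi_\lambda^{a,b})$ is nondecreasing in $\lambda$ by Remark \ref{positive-monotonicity} (applied with $k=0$, where $V=0$ so the hypotheses on $V$ hold trivially, or rather the $V$-terms vanish), we get $J_0^{a,b}(\varphi_{\lambda_0}^{a,b})\le J_0^{a,b}(\varphi)$, hence $n_0^{a,b}\le J_0^{a,b}(\varphi)$; taking the infimum over admissible $\varphi$ yields $n_0^{a,b}\le j^{a,b}$.

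To make the rescaling argument precise I would examine the function $\lambda\mapsto K_0^{a,b}(\varphi_\lambda^{a,b})$. From \eqref{gradient0} we know $\|\nabla\varphi_\lambda^{a,b}\|_{L^2}\to 0$ as $\lambda\to-\infty$, and Lemma \ref{positive} (with $k=0$) shows $K_0^{a,b}(\varphi_\lambda^{a,b})>0$ for $\lambda$ sufficiently negative; on the other hand $K_0^{a,b}(\varphi_0^{a,b})=K_0^{a,b}(\varphi)\le 0$. By continuity of $\lambda\mapsto K_0^{a,b}(\varphi_\lambda^{a,b})$ (an explicit combination of scaled norms of $\varphi$), there exists $\lambda_0\le 0$ with $K_0^{a,b}(\varphi_{\lambda_0}^{a,b})=0$; I would take $\lambda_0$ to be, say, the largest such value in $(-\infty,0]$. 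Then $\varphi_{\lambda_0}^{a,b}$ is admissible for $n_0^{a,b}$. For the monotonicity of $S_0$: note $\partial_\lambda S_0(\varphi_\lambda^{a,b})=K_0^{a,b}(\varphi_\lambda^{a,b})=\overline\mu S_0-\overline\mu J_0^{a,b}$, i.e. $\partial_\lambda S_0(\varphi_\lambda^{a,b}) = \overline\mu\big(S_0(\varphi_\lambda^{a,b})-J_0^{a,b}(\varphi_\lambda^{a,b})\big)$; combined with $K_0^{a,b}(\varphi_\lambda^{a,b})>0$ for $\lambda$ near $-\infty$ and the choice of $\lambda_0$ as the largest zero, one checks $K_0^{a,b}(\varphi_\lambda^{a,b})$ does not change sign on $(-\infty,\lambda_0)$ hence stays $\ge 0$ there (using that it is positive near $-\infty$), so $S_0(\varphi_\lambda^{a,b})$ is nondecreasing up to $\lambda_0$ and therefore $S_0(\varphi_{\lambda_0}^{a,b})\ge \liminf_{\lambda\to-\infty}S_0(\varphi_\lambda^{a,b})$; more directly, $S_0(\varphi_{\lambda_0}^{a,b}) = J_0^{a,b}(\varphi_{\lambda_0}^{a,b})\le J_0^{a,b}(\varphi)$ by the monotonicity in Remark \ref{positive-monotonicity}, which is all we actually need and bypasses delicate sign-tracking.

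The main obstacle is the second inequality, and specifically confirming that the scaling curve really does hit $\{K_0^{a,b}=0\}$ at some $\lambda_0\le 0$ and that $J_0^{a,b}$ evaluated there is controlled by $J_0^{a,b}(\varphi)$; the cleanest route is to lean on the monotonicity statement of Remark \ref{positive-monotonicity} (valid for $k=0$ since the potential terms drop out and $(a,b)$ satisfies \eqref{parameter}, giving $\overline\mu>0$), so that once $\lambda_0\le 0$ is produced we immediately get $J_0^{a,b}(\varphi_{\lambda_0}^{a,b})\le J_0^{a,b}(\varphi)$ and hence $n_0^{a,b}\le S_0(\varphi_{\lambda_0}^{a,b}) = J_0^{a,b}(\varphi_{\lambda_0}^{a,b})\le J_0^{a,b}(\varphi)$. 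I would also record the degenerate possibility $K_0^{a,b}(\varphi)=0$ already (then take $\lambda_0=0$) so the existence of $\lambda_0$ is never in question. Taking infima on both sides then closes the argument and yields \eqref{jab=n0}.
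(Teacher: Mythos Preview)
Your proposal is correct and follows essentially the same approach as the paper: the inequality $j^{a,b}\le n_0^{a,b}$ comes from the inclusion of constraint sets together with $J_0^{a,b}=S_0$ on $\{K_0^{a,b}=0\}$, and the reverse inequality is obtained by flowing any $\varphi$ with $K_0^{a,b}(\varphi)\le 0$ along the scaling $\varphi_\lambda^{a,b}$ back to a point $\lambda_0\le 0$ where $K_0^{a,b}$ vanishes (using \eqref{positiveK} and continuity), and then invoking the monotonicity of $J_0^{a,b}$ from Remark \ref{positive-monotonicity}. The only cosmetic difference is the order in which you present the two inequalities and the extra (and, as you note, unnecessary) discussion of the sign of $K_0^{a,b}$ on $(-\infty,\lambda_0)$.
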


\begin{proof}
On one hand, for any $\varphi\in H^{1}({\bf R}^{3})\setminus \{0\}$ with $K_{0}^{a,b}(\varphi)\leq 0$, there are two possibilities:
$K_{0}^{a,b}(\varphi)= 0$ and $K_{0}^{a,b}(\varphi)< 0$. When $K_{0}^{a,b}(\varphi)=0$, $S_{0}(\varphi)=J_{0}^{a,b}(\varphi)$ by the definition of $J_{k}^{a, b}$.
Hence,
\begin{align}\label{onehand}
n_{0}^{a, b}\leq J_{0}^{a,b}(\varphi).
\end{align}
When $K_{0}^{a,b}(\varphi)=K_{0}^{a,b}(\varphi_{0}^{a, b})< 0$, using the continuity of $K_{0}^{a, b}(\varphi_{\lambda}^{a, b})$ in $\lambda$ and the fact that
for sufficiently small $\lambda<0$ such that $K_{0}^{a, b}(\varphi_{\lambda}^{a, b})>0$ holds by \eqref{positiveK} yields that
there exists a $\lambda_{0}<0$ such that $K_{0}^{a, b}(\varphi_{\lambda_{0}}^{a, b})=0$.
Using Remark \ref{positive-monotonicity}, we get
$$
S_{0}(\varphi_{\lambda_{0}}^{a, b})=J_{0}^{a, b}(\varphi_{\lambda_{0}}^{a, b})\leq J_{0}^{a,b}(\varphi_{0}^{a, b})=J_{0}^{a,b}(\varphi)
$$
Hence, we still have \eqref{onehand}.
Altogether, for any $\varphi\in H^{1}({\bf R}^{3})\setminus \{0\}$ with $K_{0}^{a,b}(\varphi)\leq 0$, we have \eqref{onehand}. By the definition of $j^{a, b}$,
we have $n_{0}^{a,b}\leq j^{a,b}$.

On the other hand, for any $\varphi\in H^{1}({\bf R}^{3})\setminus \{0\}$ with $K_{0}^{a,b}(\varphi)=0$. Of course, $K_{0}^{a,b}(\varphi)\leq 0$
and $S_{0}(\varphi)=J_{0}^{a,b}(\varphi)$, which implies that $j^{a, b}\leq n_{0}^{a,b}$. Thus, we complete the proof.
\end{proof}

Next we shall apply Lemma \ref{j=n0} to get $n_{k}^{a,b}=n_{0}^{a,b}$.

\begin{lemma}\label{n=n0}
\begin{align}\label{nab=n0}
n_{k}^{a,b}=n_{0}^{a, b}.
\end{align}
\end{lemma}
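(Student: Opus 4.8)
The plan is to prove the two inequalities $n_k^{a,b}\ge n_0^{a,b}$ and $n_k^{a,b}\le n_0^{a,b}$ separately. Both directions rest on a few elementary structural facts. Since $b\le 0$, $x\cdot\nabla V\le 0$, $2a+3b\ge 0$ and $V\ge 0$, comparing \eqref{functionalK} at level $k$ and at level $0$ gives $K_k^{a,b}(\varphi)-K_0^{a,b}(\varphi)=\tfrac{2a+3b}{2}\int_{{\bf R}^3}V|\varphi|^2+\tfrac b2\int_{{\bf R}^3}(x\cdot\nabla V)|\varphi|^2\ge 0$; since moreover $2V+x\cdot\nabla V\ge 0$, the identity \eqref{positiveJ} used for $k$ and for $0$ gives $\overline{\mu}\big(J_k^{a,b}(\varphi)-J_0^{a,b}(\varphi)\big)=-\tfrac b2\int_{{\bf R}^3}(2V+x\cdot\nabla V)|\varphi|^2\ge 0$; and the potential integrals $\int_{{\bf R}^3}V|\varphi|^2$ and $\int_{{\bf R}^3}|x\cdot\nabla V|\,|\varphi|^2$, while not translation invariant, tend to $0$ when $\varphi$ is translated to spatial infinity.

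For the lower bound I would take any $\varphi\in H^1\setminus\{0\}$ with $K_k^{a,b}(\varphi)=0$. Then $K_0^{a,b}(\varphi)\le K_k^{a,b}(\varphi)=0$, so $\varphi$ is admissible in \eqref{thresholdJ}, and since $K_k^{a,b}(\varphi)=0$ forces $S_k(\varphi)=J_k^{a,b}(\varphi)$, the two comparisons above yield $S_k(\varphi)=J_k^{a,b}(\varphi)\ge J_0^{a,b}(\varphi)\ge j^{a,b}=n_0^{a,b}$, the last equality by Lemma \ref{j=n0}. Taking the infimum over such $\varphi$ gives $n_k^{a,b}\ge n_0^{a,b}$.

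For the upper bound I would run a concentration-at-infinity argument. Fix $\varepsilon>0$ and choose $\varphi\in H^1\setminus\{0\}$ with $K_0^{a,b}(\varphi)=0$ and $S_0(\varphi)<n_0^{a,b}+\varepsilon$, and write $\varphi^{(y)}:=\varphi(\cdot-y)$. A domain-splitting estimate (divide into $\{|x|\le|y|/2\}$ and its complement, use $H^1\hookrightarrow L^2\cap L^6$ together with $V\in L^{3/2}_{\rm loc}+L^\infty$, and track the explicit $\lambda$-dependence of the potential terms under $\varphi\mapsto(\varphi^{(y)})_\lambda^{a,b}$) shows that $\int_{{\bf R}^3}V\,|(\varphi^{(y)})_\lambda^{a,b}|^2\to 0$ and $\int_{{\bf R}^3}|x\cdot\nabla V|\,|(\varphi^{(y)})_\lambda^{a,b}|^2\to 0$ as $|y|\to\infty$, uniformly for $\lambda$ in compact sets; hence $K_k^{a,b}((\varphi^{(y)})_\lambda^{a,b})\to K_0^{a,b}(\varphi_\lambda^{a,b})$ and $S_k((\varphi^{(y)})_\lambda^{a,b})\to S_0(\varphi_\lambda^{a,b})$ uniformly on compact $\lambda$-intervals. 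Now $K_k^{a,b}(\varphi^{(y)})\ge K_0^{a,b}(\varphi^{(y)})=K_0^{a,b}(\varphi)=0$ (the middle step by translation invariance of $K_0^{a,b}$), while $K_k^{a,b}((\varphi^{(y)})_\lambda^{a,b})\to-\infty$ as $\lambda\to+\infty$ since $4a+3b>2a+b>0$ makes the $-\|\cdot\|_{L^4}^4$ term dominate; so by continuity there is a smallest $\lambda_y\ge 0$ with $K_k^{a,b}((\varphi^{(y)})_{\lambda_y}^{a,b})=0$, whence $n_k^{a,b}\le S_k((\varphi^{(y)})_{\lambda_y}^{a,b})$. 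The key point is that $\lambda_y\to 0$: differentiating the $k=0$ instance of \eqref{positiveJ} (whose right side reduces to $-b\|\varphi\|_{L^2}^2+\tfrac{a+b}{2}\|\varphi\|_{L^4}^4$) along the $(a,b)$-scaling, together with $\mathcal{L}^{a,b}S_0=K_0^{a,b}$ and $K_0^{a,b}=\overline{\mu}(S_0-J_0^{a,b})$, gives $\mathcal{L}^{a,b}K_0^{a,b}(\varphi)=-\overline{\mu}\,\mathcal{L}^{a,b}J_0^{a,b}(\varphi)=-\big(-b\,\underline{\mu}\|\varphi\|_{L^2}^2+\tfrac{a+b}{2}(4a+3b)\|\varphi\|_{L^4}^4\big)<0$ for $\varphi\ne 0$ (using $a>0$, $b\le 0$, $2a+3b\ge 0$, whence $a+b>0$, $4a+3b>0$ and $-b\,\underline{\mu}\ge 0$); hence $K_0^{a,b}(\varphi_\delta^{a,b})<0$ for some fixed small $\delta>0$, so $K_k^{a,b}((\varphi^{(y)})_\delta^{a,b})<0$ for $|y|$ large, which with $K_k^{a,b}(\varphi^{(y)})\ge 0$ and continuity in $\lambda$ forces $\lambda_y<\delta$. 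Letting $|y|\to\infty$ then gives $S_k((\varphi^{(y)})_{\lambda_y}^{a,b})\to S_0(\varphi)<n_0^{a,b}+\varepsilon$, so $n_k^{a,b}\le n_0^{a,b}$; combined with the lower bound this yields $n_k^{a,b}=n_0^{a,b}$.

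I expect the upper-bound half to be the main obstacle, and within it the collapse $\lambda_y\to 0$. If this failed, one would only control $J_k^{a,b}$ (not $S_k$) of the rescaled function, and the monotonicity of $\lambda\mapsto J_k^{a,b}((\varphi^{(y)})_\lambda^{a,b})$ from Remark \ref{positive-monotonicity} pushes that quantity in the wrong direction; the collapse $\lambda_y\to 0$ is exactly what converts the bare nonnegativity $K_k^{a,b}(\varphi^{(y)})\ge 0$ plus the smallness of the potential integrals into the desired bound, and its proof hinges on the strict sign of the second scaling derivative $\mathcal{L}^{a,b}K_0^{a,b}$ on the zero set of $K_0^{a,b}$ — the IMN-type convexity already recorded in Lemma \ref{monotonicity}. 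A secondary technical point is making the potential integrals vanish uniformly under translation even after the nonlinear rescaling and with only $H^1$ regularity of $\varphi$, which is handled by the domain-splitting with the scaling factors kept explicit.
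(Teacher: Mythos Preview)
Your lower bound $n_k^{a,b}\ge n_0^{a,b}$ is exactly the paper's argument: from $K_k^{a,b}(\varphi)=0$ one gets $K_0^{a,b}(\varphi)\le 0$ and $J_0^{a,b}(\varphi)\le J_k^{a,b}(\varphi)=S_k(\varphi)$, then invoke Lemma~\ref{j=n0}.

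For the upper bound both arguments follow the same translation-to-infinity strategy, but the execution differs. The paper works with the \emph{explicit} minimizer $Q$ of $n_0^{a,b}$, translates it by $x_n$ with $|x_n|\to\infty$, and corrects with a \emph{scalar} factor $\theta_n>1$ so that $K_k^{a,b}(\theta_n\tau_{x_n}Q)=0$; the relation $K_0^{a,b}(Q)=0$ then reduces the condition on $\theta_n$ to a single scalar equation from which $\theta_n\to 1$ follows at once. Your version instead takes an arbitrary near-minimizer $\varphi$, corrects with the $(a,b)$-scaling $\lambda_y$, and forces $\lambda_y\to 0$ via the strict sign of $\mathcal{L}^{a,b}K_0^{a,b}$ on the zero set of $K_0^{a,b}$ together with uniform convergence of the potential terms on compact $\lambda$-intervals. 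The paper's route is shorter and avoids the second-derivative computation; your route has the advantage that it does not use the existence of a minimizer for $n_0^{a,b}$, so it would survive in settings where only an infimizing sequence is available. One small remark: your identity $\mathcal{L}^{a,b}K_0^{a,b}(\varphi)=-\overline{\mu}\,\mathcal{L}^{a,b}J_0^{a,b}(\varphi)$ holds only after using $K_0^{a,b}(\varphi)=0$ (in general there is an extra $\overline{\mu}K_0^{a,b}$ term), and your domain splitting should be at a fixed radius $R$ rather than $|y|/2$ to keep $\|V\|_{L^{3/2}(|x|\le R)}$ bounded --- both are harmless once made explicit.
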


\begin{proof}
On one hand, for any $\varphi\in H^{1}({\bf R}^{3})\setminus \{0\}$ with $K_{k}^{a,b}(\varphi)= 0$, we have
$S(\varphi)=J^{a,b}(\varphi)$ by the definition of $J_{k}^{a, b}$. Since $V\geq 0$ and $2V+x\cdot\nabla V\geq 0$, we get
$K_{0}^{a,b}(\varphi)\leq K_{k}^{a,b}(\varphi)=0$ and $J_{0}^{a,b}(\varphi)\leq J_{k}^{a,b}(\varphi)$, which together with \eqref{jab=n0} implies that
$$
n_{0}^{a, b}=j^{a,b}\leq J_{k}^{a,b}(\varphi).
$$
Also as $K^{a,b}(\varphi)= 0$, taking infimum on both sides of the above inequality yields that
$$
n_{0}^{a, b}\leq n_{k}^{a, b}.
$$

On the other hand, we review that $Q$ is the positive radial exponential decaying solution of the elliptic equation \eqref{ellipticequation}, so there exists a constant $c$ such
that $Q(x)\lesssim e^{-c|x|}$ for any $x\in {\bf R}^{3}$ (e.g. See Theorem 8.1.1 in \cite{C}. Here we only need the decaying property of $Q$, not necessarily exponential decaying).
Let $x_{n}$ be a sequence satisfying $|x_{n}|\rightarrow+\infty$. Hence, for any given $R>0$, there exists $N=N(R)>0$, for any
$n\geq N$, we have $|x_{n}|\geq 2R$. we claim that
\begin{align}\label{V01}
\displaystyle\int_{{\bf R}^{3}}V(x)|Q(x-x_{n})|^{2}dx\rightarrow 0\;\;\text{as}\;\; n\rightarrow +\infty.
\end{align}
Indeed,
\begin{align*}
\displaystyle\int_{{\bf R}^{3}}V(x)|Q(x-x_{n})|^{2}dx
&\leq \displaystyle\int_{|x|\leq R}V(x)|Q(x-x_{n})|^{2}dx
+\displaystyle\int_{|x|> R}V(x)|Q(x-x_{n})|^{2}dx\\
&:=I_{1}+I_{2}.
\end{align*}
For the first part $I_{1}$, when $n\geq N$, $|x-x_{n}|\geq \frac{|x_{n}|}{2}$ and then
$$
|Q(x-x_{n})|\leq \sup_{|x|\geq\frac{|x_{n}|}{2}}|Q(x)|:=C(|x_{n}|)\rightarrow 0
$$
as $n$ tends to $+\infty$. Therefore,
\begin{align*}
I_{1}\lesssim C(|x_{n}|)\displaystyle\int_{|x|\leq R}V(x)dx\lesssim R^{(3-\alpha)}C(|x_{n}|).
\end{align*}
For the second part $I_{2}$, $V(x)\lesssim R^{-\alpha}$, which implies that
$$
I_{2}\lesssim R^{-\alpha}\displaystyle\int_{{\bf R}^{3}}|Q(x)|^{2}dx.
$$
Combining the above two parts yields that the claim \eqref{V01} holds true. Noticing that $x\cdot\nabla V=(-\alpha) V$, thus we also have
\begin{align}\label{nablaV0}
\displaystyle\int_{{\bf R}^{3}}(x\cdot\nabla V)|Q(x-x_{n})|^{2}dx\rightarrow 0\;\;\text{as}\;\; n\rightarrow +\infty.
\end{align}

Using the definition of $K_{k}^{a, b}$ \eqref{functionalK} and $K_{0}^{a, b}(Q)=0$ and taking $\theta$ sufficiently large, we have
$$
K_{k}^{a, b}(\tau_{x_{n}}Q)>K_{0}^{a, b}(\tau_{x_{n}}Q)=K_{0}^{a, b}(Q)=0
$$
and
$$
K_{k}^{a, b}(\theta\tau_{x_{n}}Q)<0,
$$
from which it follows that there must be $\theta_{n}>1$ satisfying $K_{k}^{a, b}(\theta_{n}\tau_{x_{n}}Q)=0$. We claim that
\begin{align}\label{theta1}
\lim_{n\rightarrow +\infty}\theta_{n}=1.
\end{align}
In fact, by $K_{0}^{a, b}(Q)=0$, we have
$$
\frac{2a+b}{2}\displaystyle\int_{{\bf R}^{3}}|\nabla Q(x)|^{2}dx
+\frac{2a+3b}{2}\displaystyle\int_{{\bf R}^{3}}|Q(x)|^{2}dx
=\frac{4a+3b}{4}\displaystyle\int_{{\bf R}^{3}}|Q(x)|^{4}dx,
$$
which implies that
\begin{align*}
K_{k}^{a, b}(\theta_{n}\tau_{x_{n}}Q)&=\theta_{n}^{2}\Big[\frac{2a+3b}{2}\displaystyle\int_{{\bf R}^{3}}V|Q(x-x_{n})|^{2}dx
+\frac{b}{2}\displaystyle\int_{{\bf R}^{3}}(x\cdot\nabla V) |Q(x-x_{n})|^{2}dx\\
&\;\;+\frac{4a+3b}{4}(1-\theta_{n}^{2})\displaystyle\int_{{\bf R}^{3}}|Q(x)|^{4}dx\Big]=0.
\end{align*}
Hence,
$$
\frac{2a+3b}{2}\displaystyle\int_{{\bf R}^{3}}V|Q(x-x_{n})|^{2}dx
+\frac{b}{2}\displaystyle\int_{{\bf R}^{3}}(x\cdot\nabla V) |Q(x-x_{n})|^{2}dx
+\frac{4a+3b}{4}(1-\theta_{n}^{2})\displaystyle\int_{{\bf R}^{3}}|Q(x)|^{4}dx=0.
$$
Taking limit in the above quality and using \eqref{V01} and \eqref{nablaV0} gives the claim \eqref{theta1}.

Using \eqref{V01} and \eqref{theta1} yields that
$$
\lim_{n\rightarrow+\infty}S_{k}(\theta_{n}\tau_{x_{n}}Q)=S_{0}(Q)=n_{0}^{a,b},
$$
which together with $K_{k}^{a, b}(\theta_{n}\tau_{x_{n}}Q)=0$ implies that
$$
n_{k}^{a, b}\leq n_{0}^{a,b}.
$$
We conclude the proof.
\end{proof}

Lemma \ref{n=n0} shows that $n_{k}^{a,b}=n_{0}^{a,b}=S_{0}(Q)$, so $n_{k}^{a,b}$ and $n_{0}^{a,b}$ don't depend on the parameters $a$ and $b$.
Therefore, \eqref{threshold4} holds true.
It is known that $n_{0}$ is attained by $Q$. However, the following lemma shows that $n_{k}$ is never attained for any $k>0$.

\begin{lemma}\label{attain}
$n_{k}$ is never attained for any $k>0$.
\end{lemma}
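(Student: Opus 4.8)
The plan is to argue by contradiction. Suppose $n_k = n_0 = S_0(Q)$ is attained by some $\varphi \in H^1(\mathbf{R}^3)\setminus\{0\}$ with $P_k(\varphi)=0$ and $S_k(\varphi) = n_0$. The guiding intuition is that the potential term $V = k|x|^{-\alpha}$ strictly increases the action unless it interacts with nothing — but a genuine minimizer cannot escape to spatial infinity, so the potential term is forced to contribute strictly positively, producing a competitor with smaller action. To make this precise I would first record, from Remark \ref{positive-monotonicity} and Lemma \ref{monotonicity} specialized to $(a,b)=(3,-2)$ (so $\overline{\mu}=4$, $\underline{\mu}=0$), that $S_k(\varphi) = J_k^{3,-2}(\varphi)$ whenever $P_k(\varphi)=K_k^{3,-2}(\varphi)=0$, and that $\lambda\mapsto J_k^{3,-2}(\varphi_\lambda^{3,-2})$ is nondecreasing while $\lambda \mapsto K_k^{3,-2}(\varphi_\lambda^{3,-2})$ changes sign from $+$ (for $\lambda\ll 0$, by \eqref{positiveK}) to the value $0$ at $\lambda=0$.

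Next I would compare $\varphi$ with the zero-potential problem. Since $V\ge 0$ and $2V + x\cdot\nabla V = (2-\alpha)V \ge 0$, we have $K_0^{3,-2}(\varphi) \le K_k^{3,-2}(\varphi) = 0$ and $J_0^{3,-2}(\varphi)\le J_k^{3,-2}(\varphi) = S_k(\varphi) = n_0$. By Lemma \ref{j=n0} (with $(a,b)=(3,-2)$), $J_0^{3,-2}(\varphi) \ge j^{3,-2} = n_0^{3,-2} = n_0$, so in fact $J_0^{3,-2}(\varphi) = n_0 = j^{3,-2}$, i.e. $\varphi$ is a minimizer for the zero-potential $J$-problem. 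Moreover the inequality $J_0^{3,-2}(\varphi)\le J_k^{3,-2}(\varphi)$ must be an \emph{equality}, which forces $\int_{\mathbf{R}^3} V|\varphi|^2\,dx = 0$ (tracking the explicit $V$-terms appearing in \eqref{positiveJ}: the coefficient of $\int V|\varphi|^2$ and of $\int(x\cdot\nabla V)|\varphi|^2$ there are $-b = 2 > 0$ and $-b/2 = 1 > 0$, and since $x\cdot\nabla V = -\alpha V \le 0$ with $-b\,V - \tfrac{b}{2}(x\cdot\nabla V) = (2 - \tfrac{\alpha}{2})V \ge 0$, equality of the $J$'s forces $\int V|\varphi|^2 = 0$). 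Since $V = k|x|^{-\alpha} > 0$ a.e., this gives $\varphi = 0$ a.e., contradicting $\varphi\neq 0$.

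The main obstacle is the bookkeeping in the previous paragraph: one must be careful that $J_0^{3,-2}(\varphi)\le J_k^{3,-2}(\varphi)$ really does hold with the \emph{full} strict monotonicity in the potential, i.e. that the map $V\mapsto J_V^{3,-2}(\varphi)$ is strictly increasing in the relevant sense, which is exactly what \eqref{positiveJ} encodes once one checks the signs of all coefficients for $(a,b)=(3,-2)$; here the constraints \eqref{parameter} and the structural hypotheses on $V$ ($V\ge 0$, $x\cdot\nabla V\le 0$, $2V+x\cdot\nabla V\ge 0$) are used. A cleaner alternative, which I would present if the direct comparison above is deemed delicate, is this: if $\varphi$ attained $n_k$, replace $\varphi$ by its translates $\tau_{x_n}\varphi$ with $|x_n|\to\infty$ as in the proof of Lemma \ref{n=n0}; the argument there already shows $\lim_n S_k(\theta_n\tau_{x_n}Q) = n_0$ with $\theta_n\to 1$, and running the analogous computation with $\varphi$ in place of $Q$ together with $P_k(\varphi)=0$ yields that for large $n$ there is $\theta_n < 1$ with $P_k(\theta_n\tau_{x_n}\varphi)=0$ and $S_k(\theta_n\tau_{x_n}\varphi) < S_k(\varphi) = n_0$ (the scaling $\theta_n<1$ strictly decreases the $L^4$ term faster than it decreases the quadratic terms, and the $V$-term only helps), contradicting the definition of $n_k$. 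Either route closes the argument; I expect the coefficient/sign verification for $(a,b)=(3,-2)$ to be the only genuinely fiddly point.
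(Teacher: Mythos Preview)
Your primary argument (the first approach) is correct and is a genuinely different, more direct route than the paper's. The paper argues by producing a strict competitor via spatial translation: given a hypothetical minimizer $\varphi$, it considers $\tau_{x_n}\varphi$ with $|x_n|\to\infty$, shows that $P_k(\tau_{x_n}\varphi) < 0$ and $J_k^{3,-2}(\tau_{x_n}\varphi) < J_k^{3,-2}(\varphi) = n_k$ strictly for large $n$, then rescales via the $(3,-2)$-dilation to land back on the constraint $P_k=0$ with action strictly below $n_k$. Your approach instead feeds $\varphi$ directly into the zero-potential variational problem and invokes the already-established identity $j^{3,-2}=n_0$ from Lemma~\ref{j=n0}; the chain $n_0 \le J_0^{3,-2}(\varphi) \le J_k^{3,-2}(\varphi) = S_k(\varphi) = n_0$ then forces the $V$-contribution in $J_k^{3,-2}$ to vanish, hence $\varphi=0$. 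This bypasses the translation-and-rescale machinery entirely and is shorter; the paper's version is more constructive (it literally builds a better competitor) but ultimately exploits the same strict positivity of the potential term in $J_k^{3,-2}$.

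Two minor points. First, your coefficient computation has a slip: with $(a,b)=(3,-2)$ one gets $-bV-\tfrac{b}{2}(x\cdot\nabla V)=2V+(x\cdot\nabla V)=(2-\alpha)V$, not $(2-\tfrac{\alpha}{2})V$; the conclusion is unaffected since both are nonnegative on the stated range. Second, both your argument and the paper's rely on $2V+x\cdot\nabla V>0$ for strictness, i.e.\ on $\alpha<2$; at $\alpha=2$ this combination vanishes identically, and the case is handled in \cite{KMVZ}. Your ``cleaner alternative'' using the multiplicative scaling $\theta_n$ is essentially the paper's translation-to-infinity idea executed with a different one-parameter rescaling (the paper uses the $(3,-2)$-dilation $\lambda$ instead of a scalar multiplier $\theta$); either realization works and leads to the same contradiction.
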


\begin{proof}
Suppose that there exists a $\varphi\in H^{1}({\bf R}^{3})\setminus \{0\}$ such that $n_{k}$ is attained by $\varphi$.
Namely, $P_{k}(\varphi)=0$ and $S_{k}(\varphi)=n_{k}$.
As $\varphi\in H^{1}({\bf R}^{3})\setminus \{0\}$, $\lim_{|x|\rightarrow +\infty}\varphi(x)=0$. Following the proof of \eqref{V01}, we have
\begin{align}\label{V0}
\displaystyle\int_{{\bf R}^{3}}V(x)|\varphi(x-x_{n})|^{2}dx\rightarrow 0\;\;\text{as}\;\; n\rightarrow +\infty.
\end{align}
where $|x_{n}|\rightarrow+\infty$ as $n\rightarrow +\infty$.
Hence, we have
\begin{align*}
-\displaystyle\int_{{\bf R}^{3}}(x\cdot\nabla V) |\tau_{x_{n}}\varphi|^{2}dx\; \text{ is positive and tends to zero as}\;\; n\rightarrow +\infty
\end{align*}
and
\begin{align*}
2\displaystyle\int_{{\bf R}^{3}}V|\tau_{x_{n}}\varphi|^{2}dx
+\displaystyle\int_{{\bf R}^{3}}(x\cdot\nabla V )|\tau_{x_{n}}\varphi|^{2}dx\; \text{ is positive and tends to zero as}\;\; n\rightarrow +\infty.
\end{align*}
Therefore, for $n$ sufficiently large, we have
\begin{align}\label{K0}
P_{k}(\tau_{x_{n}}\varphi)<P_{k}(\varphi)=0
\end{align}
and
\begin{align}\label{J}
J_{k}^{3, -2}(\tau_{x_{n}}\varphi)<J_{k}^{3,-2}(\varphi)=S_{k}(\varphi)=n_{k}.
\end{align}
By \eqref{positiveK}, for sufficiently small $\lambda<0$, we have $P_{k}((\tau_{x_{n}}\varphi)_{\lambda}^{3, -2})>0$, which combined with \eqref{K0} implies that
there exists a $\lambda_{0}<0$ such that $P_{k}((\tau_{x_{n}}\varphi)_{\lambda_{0}}^{3, -2})=0$.
Using Remark \ref{positive-monotonicity} and \eqref{J}, we get
$$
n_{k}=S_{k}((\tau_{x_{n}}\varphi)_{\lambda_{0}}^{3, -2})=J_{k}^{3, -2}((\tau_{x_{n}}\varphi)_{\lambda_{0}}^{3, -2})
< J_{k}^{3,-2}((\tau_{x_{n}}\varphi)_{0}^{3, -2})=J_{k}^{3,-2}(\tau_{x_{n}}\varphi)<n_{k},
$$
which is impossible. Thus we complete the proof.
\end{proof}

To get the fact that $P_{k}(\varphi)$ in \eqref{functionalP} and $I_{k}(\varphi)$ in \eqref{functionalI}
have the same sign under the condition $S_{k}(\varphi)<n_{0}$ with $\varphi\in H^{1}({\bf R}^{3})$, we introduce
${\mathcal{N}}_{a,b}^{\pm}\subset H^{1}({\bf R}^{3})$ defined by
\begin{align}\label{nab+}
{\mathcal{N}}_{a,b}^{+}:=\{\varphi\in H^{1}({\bf R}^{3}): S_{k}(\varphi)<n_{0}, K_{k}^{a,b}(\varphi)\geq 0\}
\end{align}
and
\begin{align}\label{nab-}
{\mathcal{N}}_{a,b}^{-}:=\{\varphi\in H^{1}({\bf R}^{3}): S_{k}(\varphi)<n_{0}, K_{k}^{a,b}(\varphi)< 0\}.
\end{align}
It is easy to see that ${\mathcal{N}}_{3,-2}^{\pm}={\mathcal{N}}^{\pm}$ in \eqref{n+} and \eqref{n-}.
The above fact can be obtained if
we show that ${\mathcal{N}}_{a,b}^{\pm}$ are independent of $(a, b)$
(i.e., ${\mathcal{N}}_{a,b}^{\pm}={\mathcal{N}}^{\pm}$), which follows from the contractivity
of ${\mathcal{N}}_{a,b}^{+}$.

\begin{lemma}\label{independentofab}
Let $(a, b)$ satisfy \eqref{parameter}, then ${\mathcal{N}}_{a,b}^{\pm}$ are independent of $(a, b)$.
\end{lemma}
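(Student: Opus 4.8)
The plan is to show $\mathcal{N}_{a,b}^{\pm} = \mathcal{N}_{3,-2}^{\pm} = \mathcal{N}^{\pm}$ for every admissible $(a,b)$, which by the trichotomy $K_k^{a,b}(\varphi) > 0$, $= 0$, or $< 0$ reduces to proving the single inclusion $\mathcal{N}_{a,b}^{+} \subseteq \mathcal{N}_{3,-2}^{+}$ for all admissible pairs (applying this with the roles of the two pairs exchanged then gives $\mathcal{N}_{a,b}^{+} = \mathcal{N}_{3,-2}^{+}$, and taking complements inside $\{S_k < n_0\}$ handles $\mathcal{N}^{-}$). So the real content is: if $S_k(\varphi) < n_0$ and $K_k^{a,b}(\varphi) \ge 0$, then $K_k^{3,-2}(\varphi) \ge 0$ as well. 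Equivalently — and this is how I would phrase the core lemma — for any $\varphi$ with $S_k(\varphi) < n_0 = n_k^{a,b}$ one \emph{cannot} have $K_k^{a,b}(\varphi) \le 0$ unless $\varphi = 0$; that is, $S_k(\varphi) < n_0$ already forces $\varphi \in \mathcal{N}_{a,b}^{+}$, and since the \emph{same} threshold $n_0$ governs every pair (Lemma \ref{n=n0} and \eqref{threshold4}), the set $\{S_k < n_0\} \cap \{K_k^{a,b} \ge 0\}$ is just $\{S_k < n_0\}$ intersected with the obvious nonzero/zero dichotomy, independent of $(a,b)$.

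The key steps, in order: (1) Fix an admissible $(a,b)$ and suppose $\varphi \in H^1 \setminus \{0\}$ satisfies $S_k(\varphi) < n_0$ but, for contradiction, $K_k^{a,b}(\varphi) < 0$. (2) Run the now-standard fibering argument along $\lambda \mapsto \varphi_\lambda^{a,b}$: by \eqref{gradient0} and Lemma \ref{positive} we have $K_k^{a,b}(\varphi_\lambda^{a,b}) > 0$ for $\lambda \ll 0$, while $K_k^{a,b}(\varphi_0^{a,b}) = K_k^{a,b}(\varphi) < 0$, so by continuity in $\lambda$ there exists $\lambda_0 < 0$ with $K_k^{a,b}(\varphi_{\lambda_0}^{a,b}) = 0$; hence $S_k(\varphi_{\lambda_0}^{a,b}) \ge n_k^{a,b} = n_0$ by the definition \eqref{thresholdab} and \eqref{nab=n0}. (3) Use Remark \ref{positive-monotonicity}: since $K_k^{a,b}(\varphi_{\lambda_0}^{a,b}) = 0$ we have $S_k(\varphi_{\lambda_0}^{a,b}) = J_k^{a,b}(\varphi_{\lambda_0}^{a,b})$, and $\lambda \mapsto J_k^{a,b}(\varphi_\lambda^{a,b})$ is increasing, so $n_0 \le J_k^{a,b}(\varphi_{\lambda_0}^{a,b}) \le J_k^{a,b}(\varphi_0^{a,b}) = J_k^{a,b}(\varphi)$. (4) On the other hand, since $K_k^{a,b}(\varphi) < 0$, the identity $J_k^{a,b} = S_k - \frac{1}{\overline\mu} K_k^{a,b}$ gives $J_k^{a,b}(\varphi) = S_k(\varphi) - \frac{1}{\overline\mu}K_k^{a,b}(\varphi) < S_k(\varphi) < n_0$ (using $\overline\mu = 2a+b > 0$), contradicting step (3). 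Therefore $K_k^{a,b}(\varphi) \ge 0$, i.e. $\varphi \in \mathcal{N}_{a,b}^{+}$. This shows $\{S_k < n_0\} \setminus \{0\} \subseteq \mathcal{N}_{a,b}^{+}$ for every admissible $(a,b)$; combined with the converse inclusion $\mathcal{N}_{a,b}^{+} \subseteq \{S_k < n_0\}$ (trivial from the definition, noting $0 \in \mathcal{N}_{a,b}^{+}$ since $K_k^{a,b}(0) = 0$), we conclude $\mathcal{N}_{a,b}^{+} = \{S_k < n_0\}$ for \emph{all} admissible $(a,b)$, hence independent of $(a,b)$; and then $\mathcal{N}_{a,b}^{-} = \{S_k < n_0\} \setminus \mathcal{N}_{a,b}^{+}$ is independent of $(a,b)$ too.

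I expect the main obstacle to be step (2)–(3), specifically making sure that the fibering curve $\lambda \mapsto \varphi_\lambda^{a,b}$ genuinely satisfies the hypotheses of Lemma \ref{positive} as $\lambda \to -\infty$ (the $L^2$-boundedness along the curve must be checked: $\|\varphi_\lambda^{a,b}\|_{L^2}^2 = e^{(2a+3b)\lambda}\|\varphi\|_{L^2}^2$, which stays bounded as $\lambda \to -\infty$ precisely because $2a+3b \ge 0$ — this is where that sign condition in \eqref{parameter} is used) and that $\varphi_\lambda^{a,b} \ne 0$ throughout, together with correctly invoking the monotonicity of $J_k^{a,b}$ along the \emph{whole} curve rather than just near $\lambda_0$. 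The rest — the trichotomy bookkeeping and the inequality in step (4) — is routine given the identities already established in Lemma \ref{monotonicity} and Remark \ref{positive-monotonicity}.
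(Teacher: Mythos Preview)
Your argument contains a sign error in step (4) that breaks the whole proof. With $\overline{\mu}=2a+b>0$ and $K_{k}^{a,b}(\varphi)<0$, the identity
\[
J_{k}^{a,b}(\varphi)=S_{k}(\varphi)-\tfrac{1}{\overline{\mu}}K_{k}^{a,b}(\varphi)
\]
gives $J_{k}^{a,b}(\varphi)>S_{k}(\varphi)$, not $J_{k}^{a,b}(\varphi)<S_{k}(\varphi)$. So from steps (2)--(3) you only obtain $n_{0}\le J_{k}^{a,b}(\varphi)$ together with $S_{k}(\varphi)<n_{0}<J_{k}^{a,b}(\varphi)$, which is perfectly consistent --- no contradiction arises. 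In fact the conclusion you draw, namely $\{S_{k}<n_{0}\}\setminus\{0\}\subset\mathcal{N}_{a,b}^{+}$ and hence $\mathcal{N}_{a,b}^{-}=\varnothing$, is false: the blow-up part of Theorem \ref{scattering1} is vacuous unless $\mathcal{N}^{-}$ is nonempty, and indeed one easily produces elements of $\mathcal{N}^{-}$ (for instance by scaling up $\tau_{x_{n}}Q$ slightly, as in the proof of Lemma \ref{n=n0}).

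What your fibering argument \emph{does} legitimately prove is the implication ``$K_{k}^{a,b}(\varphi)<0 \Rightarrow J_{k}^{a,b}(\varphi)\ge n_{0}$'', but this is essentially the content of Lemma \ref{j=n0}/\ref{n=n0} and does not by itself compare two different pairs $(a,b)$. The actual proof (following \cite{IMN}) uses a different mechanism: for any fixed $\varphi\in H^{1}\setminus\{0\}$ with $S_{k}(\varphi)<n_{0}$, one observes that $K_{k}^{a,b}(\varphi)\neq 0$ for \emph{every} admissible $(a,b)$ (otherwise $S_{k}(\varphi)\ge n_{k}^{a,b}=n_{0}$ by \eqref{threshold4}), and that $(a,b)\mapsto K_{k}^{a,b}(\varphi)$ is continuous on the connected parameter region \eqref{parameter}. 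Hence the sign of $K_{k}^{a,b}(\varphi)$ is the same for all admissible $(a,b)$, which is exactly the statement that $\mathcal{N}_{a,b}^{\pm}$ do not depend on $(a,b)$. Your scaling-curve idea is useful elsewhere in Section \ref{section3}, but here the crucial input is connectedness in the \emph{parameter} $(a,b)$, not in $\lambda$.
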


\begin{proof}
The proof is similar to the one of Lemma 2.9 in \cite{IMN} (see also Lemma 2.15 in \cite{II}), so we omit it.
\end{proof}

The following lemma shows that for any element $\varphi$ in ${\mathcal{N}}^{+}$,
$S_{k}(\varphi)\sim \|\varphi\|_{{\mathcal{H}}_{k}^{1}}\sim \|\varphi\|_{H^{1}}$.

\begin{lemma}\label{equivalentSH}
Let $\varphi\in {\mathcal{N}}^{+}$, then
\begin{align}\label{equivalenceSH}
\frac{1}{4}\|\varphi\|_{{\mathcal{H}}_{k}^{1}}^{2}\leq S_{k}(\varphi)\leq\frac{1}{2}\|\varphi\|_{{\mathcal{H}}_{k}^{1}}^{2}.
\end{align}
\end{lemma}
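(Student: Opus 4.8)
The plan is to separate the two inequalities in \eqref{equivalenceSH}, since they are of quite different natures: the upper bound is completely elementary, while the lower bound is precisely the nonnegativity of $I_k(\varphi)$, which I will extract from the parameter-independence of the sets $\mathcal{N}_{a,b}^{\pm}$ established in Lemma \ref{independentofab}.

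First I would record that, by \eqref{action} and \eqref{Sobolev1},
$$S_k(\varphi)=\frac12\|\varphi\|_{\mathcal{H}_k^1}^2-\frac14\int_{\mathbf{R}^3}|\varphi(x)|^4\,dx ,$$
so that $S_k(\varphi)\le \frac12\|\varphi\|_{\mathcal{H}_k^1}^2$ for \emph{every} $\varphi\in H^1(\mathbf{R}^3)$, with no hypothesis needed, simply because $\int_{\mathbf{R}^3}|\varphi|^4\,dx\ge 0$. This settles the right-hand inequality.

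For the left-hand inequality I would observe that $\frac14\|\varphi\|_{\mathcal{H}_k^1}^2\le S_k(\varphi)$ is, after rearranging the displayed identity, equivalent to $\int_{\mathbf{R}^3}|\varphi|^4\,dx\le \|\varphi\|_{\mathcal{H}_k^1}^2$, i.e. to $I_k(\varphi)\ge 0$ by \eqref{functionalI}. Now I would invoke Lemma \ref{independentofab}: both $(3,-2)$ and $(3,0)$ satisfy \eqref{parameter}, hence $\mathcal{N}^{+}=\mathcal{N}_{3,-2}^{+}=\mathcal{N}_{3,0}^{+}$. Therefore $\varphi\in\mathcal{N}^{+}$ forces $K_k^{3,0}(\varphi)\ge 0$, and since $I_k(\varphi)=\tfrac13 K_k^{3,0}(\varphi)$ by \eqref{functionalI}, we obtain $I_k(\varphi)\ge 0$, i.e. $\int_{\mathbf{R}^3}|\varphi|^4\,dx\le\|\varphi\|_{\mathcal{H}_k^1}^2$. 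Substituting this back into the identity for $S_k$ gives $S_k(\varphi)\ge\frac12\|\varphi\|_{\mathcal{H}_k^1}^2-\frac14\|\varphi\|_{\mathcal{H}_k^1}^2=\frac14\|\varphi\|_{\mathcal{H}_k^1}^2$, which completes the proof.

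There is no real analytic obstacle; the only point deserving care is verifying that $(3,0)$ is an admissible pair in the sense of \eqref{parameter} (indeed $a=3>0$, $b=0\le 0$, $2a+b=6>0$, $2a+3b=6\ge 0$, $(a,b)\neq(0,0)$), so that Lemma \ref{independentofab} genuinely identifies $\mathcal{N}_{3,0}^{+}$ with $\mathcal{N}^{+}$ and thus converts the sign condition $P_k(\varphi)\ge 0$ into $I_k(\varphi)\ge 0$. Since $\|\varphi\|_{\mathcal{H}_k^1}\sim\|\varphi\|_{H^1}$ is already available via Hardy's inequality, \eqref{equivalenceSH} also yields $S_k(\varphi)\sim\|\varphi\|_{H^1}^2$ for $\varphi\in\mathcal{N}^{+}$, as claimed in the introduction.
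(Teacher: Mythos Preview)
Your proof is correct and follows essentially the same approach as the paper: both use Lemma \ref{independentofab} to convert $\varphi\in\mathcal{N}^{+}$ into $I_k(\varphi)\ge 0$, deduce $\int|\varphi|^4\,dx\le\|\varphi\|_{\mathcal{H}_k^1}^2$, and then read off both inequalities from the identity $S_k(\varphi)=\tfrac12\|\varphi\|_{\mathcal{H}_k^1}^2-\tfrac14\int|\varphi|^4\,dx$. Your version is slightly more explicit in verifying that $(3,0)$ satisfies \eqref{parameter}, but the substance is identical.
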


\begin{proof}
By Lemma \ref{independentofab}, we have $I_{k}(\varphi)\geq 0$, which implies that
$$
\displaystyle\int_{{\bf R}^{3}}|\varphi(x)|^{4}dx\leq \|\varphi\|_{{\mathcal{H}}_{k}^{1}}^{2}.
$$
Thus, we have
$$
\frac{1}{2}\|\varphi\|_{{\mathcal{H}}_{k}^{1}}^{2}\geq
S_{k}(\varphi)=\frac{1}{2}\|\varphi\|_{{\mathcal H}_{k}^{1}}-\frac{1}{4}\displaystyle\int_{{\bf R}^{3}}|\varphi(x)|^{4}dx
\geq \frac{1}{4}\|\varphi\|_{{\mathcal{H}}_{k}^{1}}^{2},
$$
which is namely \eqref{equivalenceSH}. We complete the proof.
\end{proof}

Finally, we obtain the corresponding uniform bounds on $P_{k}(\varphi)$ when $\varphi\in {\mathcal{N}}^{\pm}$, which plays a vital role in
the proof of Theorem \ref{scattering1}.

\begin{lemma}\label{uniformbounds}

1. Let $\varphi\in {\mathcal{N}}^{-}$, then
\begin{align}\label{upperbound}
P_{k}(\varphi)\leq -4\Big(n_{0}-S_{k}(\varphi)\Big)
\end{align}

2. Let $\varphi\in {\mathcal{N}}^{+}$, then
\begin{align}\label{lowerbound}
P_{k}(\varphi)\geq \min\Big\{4\Big(n_{0}-S_{k}(\varphi)\Big),
\frac{2}{5}\Big(\|\nabla\varphi\|_{L^{2}}^{2}-\frac{1}{2}\displaystyle\int_{{\bf R}^{3}}(x\cdot\nabla V) |\varphi(x)|^{2}dx)\Big)\Big\}.
\end{align}
\end{lemma}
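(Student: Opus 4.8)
The plan is to run the Ibrahim--Masmoudi--Nakanishi scaling argument along the one-parameter family $\lambda\mapsto\varphi_{\lambda}^{3,-2}$, using two facts already available. First, by the definition of $K_{k}^{3,-2}$ and the group property $(\varphi_{\mu}^{3,-2})_{\lambda}^{3,-2}=\varphi_{\mu+\lambda}^{3,-2}$ one has $\frac{d}{d\lambda}S_{k}(\varphi_{\lambda}^{3,-2})=K_{k}^{3,-2}(\varphi_{\lambda}^{3,-2})=P_{k}(\varphi_{\lambda}^{3,-2})=:p(\lambda)$. Second, specializing \eqref{monotonicityJ} to $(a,b)=(3,-2)$ (so $\overline{\mu}=4$, $\underline{\mu}=0$, $b^{2}=4$, $(a+b)a=3$) and reading it along the orbit gives
\[
4p(\lambda)-p'(\lambda)=4\int_{{\bf R}^{3}}\big(-3x\cdot\nabla V-x\nabla^{2}Vx^{T}\big)|\varphi_{\lambda}^{3,-2}|^{2}\,dx+3\|\varphi_{\lambda}^{3,-2}\|_{L^{4}}^{4}\geq 0,
\]
i.e. $p'(\lambda)\leq 4p(\lambda)$, so $\lambda\mapsto e^{-4\lambda}p(\lambda)$ is non-increasing. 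The mechanism is then: whenever the orbit crosses $\{P_{k}=0\}$, Lemma \ref{n=n0} and \eqref{threshold4} force $S_{k}\geq n_{k}=n_{0}$ there, and the monotonicity of $e^{-4\lambda}p(\lambda)$ converts the corresponding change of $S_{k}$ back into a controlled multiple of $p(0)=P_{k}(\varphi)$.

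For Part 1, let $\varphi\in\mathcal{N}^{-}$, so $\varphi\neq 0$ and $p(0)=P_{k}(\varphi)<0$. By \eqref{positiveK}, $p(\lambda)>0$ for $\lambda$ sufficiently negative, so by continuity there is $\lambda_{0}<0$ with $p(\lambda_{0})=0$; as $\varphi_{\lambda_{0}}^{3,-2}\neq 0$, the definition of $n_{k}$ yields $S_{k}(\varphi_{\lambda_{0}}^{3,-2})\geq n_{0}$. Since $\int_{\lambda_{0}}^{0}p(\lambda)\,d\lambda=S_{k}(\varphi)-S_{k}(\varphi_{\lambda_{0}}^{3,-2})$ and $p(\lambda)\geq e^{4\lambda}p(0)$ for $\lambda\leq 0$, we get
\[
n_{0}-S_{k}(\varphi)\leq S_{k}(\varphi_{\lambda_{0}}^{3,-2})-S_{k}(\varphi)=-\int_{\lambda_{0}}^{0}p(\lambda)\,d\lambda\leq -p(0)\,\frac{1-e^{4\lambda_{0}}}{4}\leq -\tfrac14 p(0),
\]
which is precisely \eqref{upperbound}.

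For Part 2, let $\varphi\in\mathcal{N}^{+}$; we may assume $\varphi\neq 0$, and then $P_{k}(\varphi)>0$ (if $P_{k}(\varphi)=0$, the definition of $n_{k}$ would give $S_{k}(\varphi)\geq n_{k}=n_{0}$, contradicting $S_{k}(\varphi)<n_{0}$). Write $A:=\|\nabla\varphi\|_{L^{2}}^{2}-\tfrac12\int_{{\bf R}^{3}}(x\cdot\nabla V)|\varphi|^{2}\,dx$, so $P_{k}(\varphi)=2A-\tfrac32\|\varphi\|_{L^{4}}^{4}$. If $\|\varphi\|_{L^{4}}^{4}\leq\tfrac{16}{15}A$, then at once $P_{k}(\varphi)\geq 2A-\tfrac32\cdot\tfrac{16}{15}A=\tfrac25 A$, the second quantity in the minimum. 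Otherwise $\|\varphi\|_{L^{4}}^{4}>\tfrac{16}{15}A$. Since $\|\varphi_{\lambda}^{3,-2}\|_{L^{4}}^{4}=e^{6\lambda}\|\varphi\|_{L^{4}}^{4}$ while $\|\nabla\varphi_{\lambda}^{3,-2}\|_{L^{2}}^{2}=e^{4\lambda}\|\nabla\varphi\|_{L^{2}}^{2}$ and $\lambda\mapsto e^{-4\lambda}\big(-\int_{{\bf R}^{3}}(x\cdot\nabla V)|\varphi_{\lambda}^{3,-2}|^{2}\,dx\big)$ is non-increasing on $[0,\infty)$ (for $V=k|x|^{-\alpha}$ this term equals $e^{2\alpha\lambda}\big(-\int(x\cdot\nabla V)|\varphi|^{2}\big)$ with $2\alpha\leq 4$; in general it follows from $3x\cdot\nabla V+x\nabla^{2}Vx^{T}\leq 0$), we have $p(\lambda)\to-\infty$ as $\lambda\to+\infty$, so there is $\lambda_{1}>0$ with $p(\lambda_{1})=0$, hence $S_{k}(\varphi_{\lambda_{1}}^{3,-2})\geq n_{0}$. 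Rewriting $P_{k}(\varphi_{\lambda_{1}}^{3,-2})=0$ as $\tfrac32 e^{6\lambda_{1}}\|\varphi\|_{L^{4}}^{4}=2\big(e^{4\lambda_{1}}\|\nabla\varphi\|_{L^{2}}^{2}-\tfrac12\int_{{\bf R}^{3}}(x\cdot\nabla V)|\varphi_{\lambda_{1}}^{3,-2}|^{2}\,dx\big)$ and bounding the right-hand side by $2e^{4\lambda_{1}}A$ gives $e^{2\lambda_{1}}\leq\tfrac{4A}{3\|\varphi\|_{L^{4}}^{4}}<\tfrac{4A}{3\cdot\frac{16}{15}A}=\tfrac54$, so $e^{4\lambda_{1}}<\tfrac{25}{16}$. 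Then, using $p(\lambda)\leq e^{4\lambda}p(0)$ on $[0,\lambda_{1}]$,
\[
n_{0}-S_{k}(\varphi)\leq\int_{0}^{\lambda_{1}}p(\lambda)\,d\lambda\leq p(0)\,\frac{e^{4\lambda_{1}}-1}{4}<\tfrac{9}{64}\,p(0)<\tfrac14\,p(0),
\]
so $P_{k}(\varphi)>4\big(n_{0}-S_{k}(\varphi)\big)$, the first quantity in the minimum; combining the two cases gives \eqref{lowerbound}.

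The routine steps — the chain rule identifying $\frac{d}{d\lambda}$ along the orbit with the scaling derivative $\mathcal{L}^{3,-2}$, the explicit exponents $e^{4\lambda},e^{6\lambda}$, and the trivial case $\varphi=0$ — will be dispatched quickly. The delicate point, where the real work lies, is the dichotomy in Part 2: choosing the split at $\|\varphi\|_{L^{4}}^{4}=\tfrac{16}{15}A$ so that the first branch produces exactly the constant $\tfrac25$ and, simultaneously, the second branch forces $e^{2\lambda_{1}}\leq\tfrac54$ (hence $\frac{e^{4\lambda_{1}}-1}{4}\leq\tfrac14$), together with checking that the needed monotonicity of $\lambda\mapsto e^{-4\lambda}\big(-\int_{{\bf R}^{3}}(x\cdot\nabla V)|\varphi_{\lambda}^{3,-2}|^{2}\,dx\big)$ really does follow from $3x\cdot\nabla V+x\nabla^{2}Vx^{T}\leq 0$ (equivalently, from $\alpha\leq 2$ for the explicit potential). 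Everything else is immediate from Lemma \ref{monotonicity}, Remark \ref{positive-monotonicity}, and Lemma \ref{n=n0}.
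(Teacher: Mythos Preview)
Your proof is correct, and Part~1 together with the dichotomy in Part~2 coincides with the paper's argument (your split $\|\varphi\|_{L^4}^4\lessgtr\tfrac{16}{15}A$ is algebraically the same as the paper's $8P_k(\varphi)\lessgtr 3\|\varphi\|_{L^4}^4$, since $P_k(\varphi)=2A-\tfrac32\|\varphi\|_{L^4}^4$). The second branch of Part~2, however, is handled differently. The paper keeps the full differential inequality $s''(\lambda)\le 4s'(\lambda)-3e^{6\lambda}\|\varphi\|_{L^4}^4$; under the hypothesis $8s'(0)<3\|\varphi\|_{L^4}^4$ this yields $s''<-4s'$ at $\lambda=0$, a bootstrap shows the inequality persists until the first zero $\lambda_1$ of $s'$, and integrating $s''<-4s'$ on $[0,\lambda_1]$ gives $P_k(\varphi)>4(n_0-S_k(\varphi))$ directly, with no need to locate $\lambda_1$. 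Your route instead uses only the weaker $p'\le 4p$ and compensates by bounding $\lambda_1$ explicitly via $e^{2\lambda_1}\le\tfrac{4A}{3\|\varphi\|_{L^4}^4}<\tfrac54$. Both work; the paper's version is slightly slicker because it avoids the auxiliary monotonicity claim on the potential term, while yours makes the role of the threshold $\tfrac{16}{15}A$ more transparent.

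One small correction: your parenthetical that the monotonicity of $\lambda\mapsto e^{-4\lambda}\big(-\int(x\cdot\nabla V)|\varphi_\lambda^{3,-2}|^2\big)$ ``in general follows from $3x\cdot\nabla V+x\nabla^2Vx^T\le 0$'' is not right. Writing $g(\lambda)$ for that integral, the scaling identity in Lemma~\ref{monotonicity} gives $g'=2g+2\int x\nabla^2Vx^T|\varphi_\lambda|^2$, and the hypothesis only yields $g'\le 8g$, hence $e^{-8\lambda}g$ non-increasing, not $e^{-4\lambda}g$. For the explicit potential $V=k|x|^{-\alpha}$ your direct computation $g(\lambda)=e^{2\alpha\lambda}g(0)$ with $2\alpha\le 4$ is what actually does the job, and since the paper is about this specific $V$ the argument stands; just drop the ``in general'' claim.
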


\begin{proof}
For any $\varphi\in H^{1}({\bf R}^{3})$, define $s(\lambda):=S_{k}(\varphi_{\lambda}^{3,-2})$, then
\begin{align}
&s(\lambda)=\frac{1}{2}e^{4\lambda}\displaystyle\int_{{\bf R}^{3}}|\nabla \varphi(x)|^{2}dx
+\frac{1}{2}\displaystyle\int_{{\bf R}^{3}}V(e^{-2\lambda}x)|\varphi(x)|^{2}dx\nonumber\\
&\quad\quad\quad +\frac{1}{2}\displaystyle\int_{{\bf R}^{3}}|\varphi(x)|^{2}dx
-\frac{1}{4}e^{6\lambda}\displaystyle\int_{{\bf R}^{3}}|\varphi(x)|^{4}dx,\nonumber\\
&s'(\lambda)=P_{k}(\varphi_{\lambda}^{3,-2})=2e^{4\lambda}\displaystyle\int_{{\bf R}^{3}}|\nabla \varphi(x)|^{2}dx
-e^{-2\lambda}\displaystyle\int_{{\bf R}^{3}}\Big[x\cdot (\nabla V)(e^{-2\lambda}x)\Big]|\varphi(x)|^{2}dx\label{s'}\\
&\quad\quad\quad-\frac{3}{2}e^{6\lambda}\displaystyle\int_{{\bf R}^{3}}|\varphi(x)|^{4}dx\nonumber\\
&\quad\quad\geq 2e^{4\lambda}\displaystyle\int_{{\bf R}^{3}}|\nabla \varphi(x)|^{2}dx
-\frac{3}{2}e^{6\lambda}\displaystyle\int_{{\bf R}^{3}}|\varphi(x)|^{4}dx,\nonumber\\
&s''(\lambda)=8e^{4\lambda}\displaystyle\int_{{\bf R}^{3}}|\nabla \varphi(x)|^{2}dx
+2e^{-2\lambda}\displaystyle\int_{{\bf R}^{3}}\Big[x\cdot (\nabla V)(e^{-2\lambda}x)\Big]|\varphi(x)|^{2}dx \label{s''}\\
&\quad\quad\quad+2e^{-4\lambda}\displaystyle\int_{{\bf R}^{3}}\Big[x(\nabla^{2} V)(e^{-2\lambda}x)x^{T}\Big]|\varphi(x)|^{2}dx
-9e^{6\lambda}\displaystyle\int_{{\bf R}^{3}}|\varphi(x)|^{4}dx\nonumber\\
&\quad\quad\leq 8e^{4\lambda}\displaystyle\int_{{\bf R}^{3}}|\nabla \varphi(x)|^{2}dx
-4e^{-2\lambda}\displaystyle\int_{{\bf R}^{3}}\Big[x\cdot (\nabla V)(e^{-2\lambda}x)\Big]|\varphi(x)|^{2}dx\nonumber\\
&\quad\quad\quad-9e^{6\lambda}\displaystyle\int_{{\bf R}^{3}}|\varphi(x)|^{4}dx\nonumber\\
&\quad\quad= 4s'(\lambda)-3\displaystyle\int_{{\bf R}^{3}}|\varphi(x)|^{4}dx\leq 4s'(\lambda)\nonumber,
\end{align}
where we have used the inequalities $x\cdot\nabla V$ and $3x\cdot\nabla V+x\nabla^{2} Vx^{T}\leq 0$.

1. If $\varphi\in {\mathcal{N}}^{-}$, then it follows from \eqref{s'} that $s'(0)=P_{k}(\varphi)<0$ and $s'(\lambda)>0$ for sufficiently small $\lambda<0$.
Thus, by the continuity of $s'(\lambda)=P_{k}(\varphi_{\lambda}^{3,-2})$ in $\lambda$, there exists a negative $\lambda_{0}<0$ such that
\begin{align*}
s'(\lambda_{0})=P_{k}(\varphi_{\lambda_{0}}^{3,-2})=0\;\;\text{and}\;\;s'(\lambda)<0,\;\;\text{for}\;\;\forall \lambda\in (\lambda_{0},0].
\end{align*}
By the definition of $n_{0}$ \eqref{threshold}, $s(\lambda_{0})=S_{k}(\varphi_{\lambda_{0}}^{3,-2})\geq n_{0}$. Integrating the inequality \eqref{s''} over
$[\lambda_{0}, 0]$ yields that
$$
P_{k}(\varphi)=s'(0)=s'(0)-s'(\lambda_{0})\leq 4(s(0)-s(\lambda_{0}))\leq 4(S_{k}(\varphi)-n_{0})=-4(n_{0}-S_{k}(\varphi)).
$$
Thus, we complete the proof of \eqref{upperbound}.

2. If $\varphi\in {\mathcal{N}}^{+}$, we consider two cases: one is $8P_{k}(\varphi)\geq 3\|\varphi\|_{L^{4}}^{4}$ and
the other is $8P_{k}(\varphi)< 3\|\varphi\|_{L^{4}}^{4}$.

For the case $8P_{k}(\varphi)\geq 3\|\varphi\|_{L^{4}}^{4}$, it follows from the definition of $P_{k}$ \eqref{functionalP} that
$$
2P_{k}(\varphi)=4\displaystyle\int_{{\bf R}^{3}}|\nabla \varphi(x)|^{2}dx
-2\displaystyle\int_{{\bf R}^{3}}(x\cdot\nabla V) |\varphi(x)|^{2}dx
-3\displaystyle\int_{{\bf R}^{3}}|\varphi(x)|^{4}dx,
$$
and then we have
$$
10P_{k}(\varphi)\geq 4(\|\nabla\varphi\|_{L^{2}}^{2}-\frac{1}{2}\displaystyle\int_{{\bf R}^{3}}(x\cdot\nabla V) |\varphi(x)|^{2}dx),
$$
that is,
\begin{align}\label{lowerbound1}
P_{k}(\varphi)\geq \frac{2}{5}\Big(\|\nabla\varphi\|_{L^{2}}^{2}-\frac{1}{2}\displaystyle\int_{{\bf R}^{3}}(x\cdot\nabla V) |\varphi(x)|^{2}dx\Big).
\end{align}
For the other case $8P_{k}(\varphi)< 3\|\varphi\|_{L^{4}}^{4}$, by \eqref{s''}, we have
\begin{align}\label{s'decreasing}
0<8s'(\lambda)<3e^{6\lambda}\|\varphi\|_{L^{4}}^{4}\;\;\text{and then}\;\;s''(\lambda)\leq 4s'(\lambda)-3e^{6\lambda}\|\varphi\|_{L^{4}}^{4}< -4s'(\lambda)
\end{align}
at $\lambda=0$. Also as $s'(\lambda)$ and $s''(\lambda)$ are continuous, $s'(\lambda)$ decreases as $s$ increases until $s'(\lambda_{1})=0$ for some $0<\lambda_{1}<+\infty$
and \eqref{s'decreasing} is true over $[0, \lambda_{1}]$. Since $P_{k}(\varphi_{\lambda_{1}}^{3,-2})=s'(\lambda_{1})=0$, by
the definition of $n_{0}$ \eqref{threshold}, $s(\lambda_{1})=S_{k}(\varphi_{\lambda_{1}}^{3,-2})\geq n_{0}$. Integrating the second inequality in \eqref{s'decreasing} over
$[0, \lambda_{1}]$, we have
\begin{align*}
-P_{k}(\varphi)=s'(\lambda_{1})-s'(0)<-4(s(\lambda_{1})-s(0))\leq -4\Big(n_{0}-S_{k}(\varphi)\Big),
\end{align*}
which is
\begin{align}\label{lowerbound2}
P_{k}(\varphi)\geq 4\Big(n_{0}-S_{k}(\varphi)\Big).
\end{align}
Putting \eqref{lowerbound1} and \eqref{lowerbound2} together yields \eqref{lowerbound}.

\end{proof}

\section{Criteria for global well-posedness and blow-up}
\setcounter{equation}{0}

In this section, we will give the criteria for global well-posedness and blow-up for the solution $u$ of ($\rm{NLS_{k}}$), which are partial results of Theorem \ref{scattering1}.
The proof of blow-up part is based on the argument of \cite{DWZ}.

\begin{theorem}\label{globalvsblowup}
Let $u$ be the solution of ($\rm{NLS_{k}}$) on $(-T_{min}, T_{max})$, where $(-T_{min}, T_{max})$ is the maximal life-span.

(i) If $u_{0}\in {\mathcal{N}}^{+}$, then $u$ is global well-posedness and $u(t)\in {\mathcal{N}}^{+}$ for any $t\in {\bf R}$.

(ii) If $u_{0}\in {\mathcal{N}}^{-}$, then $u(t)\in {\mathcal{N}}^{-}$ for any $t\in (-T_{min}, T_{max})$ and one of the following four statements holds true:

\;\;(a) $T_{max}<+\infty$ and $\lim_{t\uparrow T_{max}}\|\nabla u(t)\|_{L^{2}}=+\infty$.

\;\;(b) $T_{min}<+\infty$ and $\lim_{t\downarrow -T_{min}}\|\nabla u(t)\|_{L^{2}}=+\infty$.

\;\;(c) $T_{max}=+\infty$ and there exists a sequence $\{t_{n}\}_{n=1}^{+\infty}$ such that $t_{n}\rightarrow+\infty$ and $\lim_{t_{n}\uparrow +\infty}\|\nabla u(t)\|_{L^{2}}=+\infty$.

\;\;(d) $T_{min}=+\infty$ and there exists a sequence $\{t_{n}\}_{n=1}^{+\infty}$ such that $t_{n}\rightarrow-\infty$ and $\lim_{t_{n}\downarrow -\infty}\|\nabla u(t)\|_{L^{2}}=+\infty$.
\end{theorem}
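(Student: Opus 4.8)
The plan is to treat the two cases of the statement separately, after first settling the invariance of the sets $\mathcal{N}^{\pm}$. Throughout, mass and energy conservation \eqref{mass}--\eqref{energy} give $S_k(u(t))=S_k(u_0)$ and $\|u(t)\|_{L^2}=\|u_0\|_{L^2}$ on the maximal interval, and we may assume $u_0\neq 0$, so that $u(t)\neq 0$ for every $t$. The functional $t\mapsto P_k(u(t))$ is continuous on $(-T_{min},T_{max})$: indeed $u\in C(I;H^1({\bf R}^3))$, the maps $\|\nabla\cdot\|_{L^2}^2$ and $\|\cdot\|_{L^4}^4$ are continuous on $H^1({\bf R}^3)$ by Sobolev embedding, and $\int(x\cdot\nabla V)|\cdot|^2=-\alpha\int V|\cdot|^2$ is continuous on $H^1({\bf R}^3)$ by Hardy's inequality \eqref{Hardy}. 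If $u_0\in\mathcal{N}^+$ but $P_k(u(t_1))<0$ for some $t_1$, the intermediate value theorem produces $t_0$ between $0$ and $t_1$ with $P_k(u(t_0))=0$ and $u(t_0)\neq 0$; the definition \eqref{threshold} of $n_k$ together with $n_k=n_0$ (Lemma \ref{n=n0}) then forces $S_k(u(t_0))\geq n_0$, contradicting $S_k(u(t_0))=S_k(u_0)<n_0$. Hence $u(t)\in\mathcal{N}^+$ for all $t$; the same argument, starting from $P_k(u_0)<0$, shows that $u_0\in\mathcal{N}^-$ implies $u(t)\in\mathcal{N}^-$ for all $t$.

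For part (i), since $u(t)\in\mathcal{N}^+$ on the life-span, Lemma \ref{equivalentSH} gives $\|u(t)\|_{\mathcal{H}_k^1}^2\leq 4S_k(u(t))=4S_k(u_0)<4n_0$, and since $\|\cdot\|_{\mathcal{H}_k^1}\sim\|\cdot\|_{H^1}$ the norm $\|u(t)\|_{H^1}$ stays bounded on $(-T_{min},T_{max})$. Because the local existence time in Theorem \ref{localwellposedness}(i) depends only on $\|u_0\|_{H^1}$, the solution extends globally, i.e. $T_{min}=T_{max}=+\infty$; together with the invariance just proved this is exactly (i).

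For part (ii), the invariance gives $u(t)\in\mathcal{N}^-$, and Lemma \ref{uniformbounds}(1) upgrades this to the uniform lower bound $P_k(u(t))\leq -4(n_0-S_k(u(t)))=-4\delta$ with $\delta:=n_0-S_k(u_0)>0$. Suppose none of (a)--(d) held. If $T_{max}<+\infty$, the blow-up alternative of the local theory and mass conservation would force $\|\nabla u(t)\|_{L^2}\to+\infty$ as $t\uparrow T_{max}$, which is (a); hence $T_{max}=+\infty$, and the failure of (c) means $\sup_{t\geq 0}\|\nabla u(t)\|_{L^2}<+\infty$. Arguing symmetrically with $T_{min}$, (b) and (d), we conclude $T_{min}=T_{max}=+\infty$ and $A:=\sup_{t\in{\bf R}}\|\nabla u(t)\|_{L^2}<+\infty$, so also $\sup_{t\in{\bf R}}\|u(t)\|_{H^1}<+\infty$. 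It remains to rule out such a solution, and here I would follow the truncated-virial argument of Du--Wu--Zhang \cite{DWZ}. Take a radial $\phi_R\in C^\infty({\bf R}^3)$ with $\phi_R(x)=|x|^2$ for $|x|\leq R$, $\phi_R$ constant for $|x|\geq 2R$, $\nabla^2\phi_R\leq 2\,\mathrm{Id}$, $|\nabla\phi_R|\lesssim R$ and $|\Delta^2\phi_R|\lesssim R^{-2}$, and set $M_R(t):=2\,\Im\int_{{\bf R}^3}\bar u\,\nabla\phi_R\cdot\nabla u\,dx$. The virial identity for $(\rm{NLS_{k}})$ with weight $\phi_R$ (the localization of the identity discussed after \eqref{functionalP}) yields
\begin{equation*}
M_R'(t)=4P_k(u(t))+\mathcal{E}_R(t)\leq -16\delta+\mathcal{E}_R(t),
\end{equation*}
where $\mathcal{E}_R(t)$ is supported in $\{|x|>R\}$: the biharmonic term is $O(R^{-2}\|u_0\|_{L^2}^2)$, the potential terms are $O(\int_{|x|>R}(V+|x\cdot\nabla V|)|u|^2\,dx)=O(R^{-\alpha}\|u_0\|_{L^2}^2)$ because $V\geq 0$ and $2V+x\cdot\nabla V\geq 0$, and the kinetic term has the good sign since $\nabla^2\phi_R\leq 2\,\mathrm{Id}$; hence $\mathcal{E}_R(t)\lesssim_A o_R(1)+\int_{|x|>R}|u(t)|^4\,dx$ with $o_R(1)\to 0$ uniformly in $t$ as $R\to\infty$. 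On the other hand $|M_R(t)|\lesssim_A R$, while the escaping mass $y_R(t):=\int_{|x|>R}|u(t)|^2\,dx$ satisfies $|y_R'(t)|\lesssim_A R^{-1}$ and $y_R(0)\to 0$ as $R\to\infty$, which, through the Gagliardo--Nirenberg bound $\int_{|x|>R}|u|^4\lesssim_A y_R(t)^{1/2}$, lets one absorb $\mathcal{E}_R$. Balancing these rates in $R$ over a suitable time window produces a contradiction exactly as in \cite{DWZ}, so one of (a)--(d) must hold.

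The invariance of $\mathcal{N}^{\pm}$, part (i), and the uniform negativity $P_k(u(t))\leq -4\delta$ are soft consequences of the variational lemmas of Section \ref{section3}. The genuine obstacle is the final step: running the virial without radiality or finite variance. The point is that $\int_{|x|>R}|u(t)|^4\,dx$ cannot be made small pointwise in $t$ and must instead be controlled only after being weighed against the growth in $R$ of $M_R$ and of the escaping mass; carrying out this bookkeeping — so that the uniformly negative drift wins — is the technical heart of the Du--Wu--Zhang method and the step I expect to require the most care.
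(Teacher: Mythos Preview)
Your proposal is correct and follows essentially the same route as the paper: the invariance of $\mathcal{N}^{\pm}$ via continuity of $P_k$ and conservation of $S_k$ (your intermediate-value formulation is equivalent to the paper's open-and-closed argument, which invokes \eqref{lowerbound}/\eqref{upperbound} for openness), the $H^1$ bound from Lemma~\ref{equivalentSH} for part~(i), and the Du--Wu--Zhang localized-virial scheme for part~(ii) with the same error decomposition (kinetic and potential errors nonpositive by $\nabla^2\phi_R\le 2\,\mathrm{Id}$ and $x\cdot\nabla V\le 0$, biharmonic error $O(R^{-2})$, $L^4$ error controlled via Gagliardo--Nirenberg and the exterior mass) are exactly what the paper does.

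One point worth making explicit when you carry out the final step you flagged as delicate: integrating $M_R'=I''\leq\beta_0<0$ once does \emph{not} close, because the uniform bound $|M_R(t)|\lesssim_A R$ that you recorded and the accumulated drift $|\beta_0|\,T$ over the admissible window $T\sim\eta_0 R$ are of the same order in $R$. The paper (following \cite{DWZ}) instead works with $I(t)=\int\phi_R|u|^2\,dx\geq 0$ and integrates twice to obtain
\[
I(T)\leq I(0)+I'(0)\,T+\tfrac12\beta_0\,T^2;
\]
the crucial extra input is that $I(0)=o_R(1)\,R^2$ and $I'(0)=o_R(1)\,R$ (split the integrals at $|x|=\sqrt R$ and use $u_0\in L^2$), so with $T=\eta_0 R/(4M(u_0)C_0)$ the quadratic term $\tfrac12\beta_0\eta_0^2 R^2$ dominates and forces $I(T)<0$, contradicting $\phi_R\geq 0$. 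Your citation of \cite{DWZ} certainly covers this, but since you framed the argument around $M_R$ rather than $I$, be aware that the contradiction lives at the level of $I$ and requires the sharper $o_R(1)$ bounds on $I(0),\,I'(0)$, not merely $|M_R(t)|\lesssim_A R$.
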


\begin{proof}

$(i)$ Define
$$
I^{+}=\{t\in (-T_{min}, T_{max}): u(t)\in {\mathcal{N}}^{+}\}.
$$
Obviously, $0\in I^{+}\neq \Phi$. On one hand, since $S_{k}(u(t))=S_{k}(u_{0})<n_{0}$ and $P_{k}(u(t))$ is continuous in $t$, $I^{+}$ is a closed subset of $(-T_{min}, T_{max})$.
On the other hand, by \eqref{lowerbound}, we further obtain that, $I^{+}$ is a open subset of $(-T_{min}, T_{max})$. Therefore, $I^{+}=(-T_{min}, T_{max})$. Namely,
for any $t\in (-T_{min}, T_{max})$, $u(t)\in {\mathcal{N}}^{+}$. It follows form \eqref{equivalenceSH} that for any $t\in (-T_{min}, T_{max})$,
$$
\|u(t)\|_{H^{1}}^{2}\leq \|u(t)\|_{{\mathcal{H}}_{k}^{1}}^{2}\leq 4S_{k}(u(t))\leq 4n_{0}.
$$
So by local well-posedness result $(i)$ of Theorem \ref{localwellposedness}, we have $(-T_{min}, T_{max})={\bf R}$, which implies that
$u$ is global well-posedness and $u(t)\in {\mathcal{N}}^{+}$ for any $t\in {\bf R}$. Thus, we complete the proof of $(i)$.

$(ii)$ Similarly above, we can show that $u(t)\in {\mathcal{N}}^{-}$ for any $t\in (-T_{min}, T_{max})$ by replacing \eqref{lowerbound} with \eqref{upperbound}.

In the sequel, we only consider the positive time because the negative time can be dealt with similarly. If $T_{max}<+\infty$, we naturally have
$\lim_{t\uparrow T_{max}}\|\nabla u(t)\|_{L^{2}}=+\infty$. If $T_{max}=+\infty$, we shall prove $\lim_{t\uparrow +\infty}\|\nabla u(t)\|_{L^{2}}=+\infty$
by contradiction.
Assume we have
$$
C_0=\sup_{t\in\mathbb R^+}\|\nabla u(t)\|_{L^2}<\infty.
$$
Consider the localized Virial identity and define
\begin{equation}\label{vf}
I(t):=\int_{{\bf R}^{3}}\phi(x)|u(t,x)|^2dx,
\end{equation}
then by straight computations, we obtain that for any $\phi\in C^4(\mathbb R^3)$ (e.g., see Proposition 7.1 in \cite{H})
\begin{equation*}
I'(t)=2\Im\int_{{\bf R}^{3}}\nabla\phi\cdot\nabla u\bar udx;
\end{equation*}
\begin{align*}
I''(t)=\int_{{\bf R}^{3}}4\Re\nabla\bar u\nabla^2\phi\nabla udx
-\int_{{\bf R}^{3}}2\nabla\phi\cdot\nabla V|u|^2+\Delta\phi |u|^4dx
-\int_{{\bf R}^{3}}\Delta^2\phi|u|^2dx.
\end{align*}
In particular, if $\phi$ is a radial function ,
\begin{align}\label{I'}
I'(t)=2\Im\int_{{\bf R}^{3}}\phi'(r)\frac{x\cdot\nabla u}r\bar udx,
\end{align}
\begin{align}\label{I''0}
&I''(t)=4\int_{{\bf R}^{3}}\frac{\phi'}r|\nabla u|^2dx+4\int_{{\bf R}^{3}}\left(\frac{\phi''}{r^2}-\frac{\phi'}{r^3}\right)|x\cdot\nabla u|^2dx\\
&-2\int_{{\bf R}^{3}}\frac{\phi'}{r}x\cdot\nabla V|u|^2dx-\int_{{\bf R}^{3}}\left(\phi''(r)+\frac{2}r\phi'(r)\right) |u|^4dx\nonumber
-\int_{{\bf R}^{3}}\Delta^2\phi|u|^2dx.
\end{align}

{\bf $L^2$ estimate in the exterior ball}

Given $R\gg 1$, which will be determined later. Take $\phi$ in \eqref{vf} such  that
$$
\phi=\begin{cases}0,&0\leq r\leq\frac R2;\\1,&r\geq R,\end{cases}
$$
and
$$
0\leq\phi\leq1,\ \ 0\leq\phi'\leq\frac4R.
$$
By \eqref{I'} and H\"{o}lder inequality, there holds that
\begin{align*}
I(t)=&I(0)+\int_0^tI'(\tau)d\tau
\leq I(0)+t\|\phi'\|_{L^\infty}M(u_{0}) C_0\\
\leq&\int_{|x|\geq\frac R2}|u_0|^2dx+\frac{4M(u_{0}) C_0t}R.
\end{align*}
Note that
$$
\int_{|x|\geq\frac R2}|u_0|^2dx=o_R(1),
$$
and
$$
\int_{|x|\geq  R}|u(t,x)|^2dx\leq I(t).
$$
So for given $\eta_0>0$, if
$$
t\leq\frac{\eta_0R}{4M(u_{0}) C_0},
$$
then we have that
\begin{align}\label{outermass}
\int_{|x|\geq\frac R2}|u(t,x)|^2dx\leq\eta_0+o_R(1).
\end{align}

{\bf Localized Virial identity}

$I''(t)$ can be rewritten as
\begin{align}\label{I''}
I''(t)=4P_{k}(u)+R_1+R_2+R_3+R_4,
\end{align}
where
$$R_1=4\int_{{\bf R}^{3}}\left(\frac{\phi'}r-2\right)|\nabla u|^2dx+4\int_{{\bf R}^{3}}
\left(\frac{\phi''}{r^2}-\frac{\phi'}{r^3}\right)|x\cdot\nabla u|^2dx,$$
$$R_2=-\int_{{\bf R}^{3}}\left(\phi''+\frac{2}r\phi'(r)-6\right)| u|^4dx,$$
$$R_3=-2\int_{{\bf R}^{3}}\left(\frac{\phi'}{r}-2\right)(x\cdot\nabla V)|u|^2dx,$$
$$R_4=-\int_{{\bf R}^{3}}\Delta^2\phi|u|^2dx.$$
At this stage, we choose another radial function $\phi$ such that
\begin{align}\label{radialfunction}
0\leq\phi\leq r^2,\ \ \phi''\leq2,\ \ \phi^{(4)}\leq\frac4{R^2},\;\;
\text{and}\;\;
\phi=\begin{cases}r^2,&0\leq r\leq R;\\0,&r\geq 2R\end{cases}.
\end{align}
First, note that $R_1\leq0$.
If $\phi''\leq r^{-1}\phi'\leq0$, by $\phi'\leq2r$, it is easy to see that
$R_1\leq0$.
If $\phi''\leq r^{-1}\phi'\leq0$, by $\phi''\leq2$, it holds that
$$
R_1\leq 4\int_{{\bf R}^{3}}\left(\frac{\phi'}r-2\right)|\nabla u|^2dx+4\int_{{\bf R}^{3}}
\left(\phi''-\frac{\phi'}r\right)|\nabla u|^2dx=4\int_{{\bf R}^{3}}\left(\phi''-2\right)|\nabla u|^2dx\leq 0.
$$
Secondly, let $\chi^{4}(r)=|\phi''+\frac{2}r\phi'(r)-6|$. it is easy to see that ${\rm supp}\chi\subset[R,\infty).$
So by Gagliardo-Nirenberg inequality
$$
R_2\leq \displaystyle\int_{{\bf R}^{3}}\chi^{4}|u|^{4}dx\lesssim \|\nabla(\chi u)\|_{L^{2}}^{3}\|\chi u\|_{L^{2}}
\leq C(M(u_{0}), C_0) \|u\|_{L^2(|x|>R)}^{\frac{1}{4}}.
$$
By the properties of $\phi$,
$$
R_4\leq CR^{-2}\|u\|_{L^2(|x|>R)}^2.
$$
Finally, by $x\cdot\nabla V\leq0$, and we obtain $R_3\leq0$.

Putting all the above estimates together, there holds that for $R\gg 1$,
\begin{align}\label{I''upperbound}
I''(t)\leq 4P_{k}(u)+\tilde C\|u\|_{L^2(|x|>R)},
\end{align}
where $\tilde C>0$ depending on $M(u_{0})$ and $C_0$.

Using \eqref{outermass} and \eqref{I''upperbound} yields that
$t\leq T:=\eta_0R/(4M(u_{0}) C_0)$,
$$I''(t)\leq 4P_{k}(u)+\tilde C(\eta_0^{1/2}+o_R(1)).$$
As $u(t)\in {\mathcal{N}}^{-}$, it follows from \eqref{upperbound} that there exists
$$\beta_{0}:=-4(n_{0}-S_{k}(u(t)))=-4(n_{0}-S_{k}(u_{0}))<0$$ such that
\begin{align}\label{I''upperbound1}
I''(t)\leq 4\beta_{0}+\tilde C(\eta_0^{1/2}+o_R(1)).
\end{align}
Choose $\eta_0$ sufficiently small
and take $R$ sufficiently large such that
$$
4\beta_{0}+\tilde C\eta_0^{1/2}+o_R(1)<\beta_{0}.
$$
Integrating \eqref{I''upperbound1} over $[0, T]$ twice,
 we obtain that
\begin{align*}
I(T)&\leq I(0)+I'(0)T+\int_0^T\int_0^t\beta_{0}\\
&\leq I(0)+I'(0)T+\beta_{0}\frac{T^2}2.
\end{align*}
Hence for $T=\eta_0R/(4M(u_{0})C_0)$, we obtain that
\begin{align}\label{IT}
I(T)\leq I(0)+I'(0)\eta_0R/(4M(u_{0}) C_0)+\alpha_0R^2,
\end{align}
where the constant $$\alpha_0=\beta_0\eta_0^2/(4M(u_{0}) C_0)^2<0$$ is independent of $R$.

At the same time, we note that
\begin{align}\label{I0}
I(0)=o_R(1)R^2,\ \ \ I'(0)=o_R(1)R.
\end{align}
In fact,
\begin{align*}
I(0)&\leq\int_{|x|<\sqrt{R}}|x|^2|u_0|^2dx+\int_{\sqrt{R}<|x|<2R}|x|^2|u_0|^2dx\\
&\leq RM(u_{0})+R^2\int_{|x|>\sqrt{R}}|u_0|^2dx=o_R(1)R^2.
\end{align*}
Similarly, we obtain the second estimate and then prove \eqref{I0}.

Putting \eqref{IT} with \eqref{I0} together and choosing $R$ sufficiently enough, we find that
$$
I(T)\leq (o_R(1)+\alpha_0)R^2\leq\frac14\alpha_0R^2<0,
$$
which is impossible since $I\geq 0$. Thus,
we conclude the proof of blow-up part.

\end{proof}

\medskip

\section{Scattering result}
\setcounter{equation}{0}
In this section, we shall show the remaining part of Theorem \ref{scattering1}. In the previous section, we have obtained
that the solution $u(t)$ of ($\rm{NLS_{k}}$) is global and belongs to $\mathcal{N}^{+}$ if $u_{0}\in \mathcal{N}^{+}$.
To get scattering result, by $(iv)$ of
Theorem \ref{localwellposedness}, it's enough to get \eqref{scatteringbound}. To this end, we introduce a definition.

\begin{definition}\label{SC}
We say that $SC(u_0)$ holds if for $u_0\in H^1({\bf R}^{3})$ satisfying
$u_{0}\in \mathcal{N}^{+}$,
 the corresponding global solution $u$  of ($\rm{NLS_{k}}$)  satisfies \eqref{scatteringbound}.
\end{definition}
We first note that for $u_{0}\in \mathcal{N}^{+}$, there exists $\delta>0$ such that if $S_{k}(u_{0})<\delta$, then
\eqref{scatteringbound} holds. In fact, by \eqref{equivalenceSH}, $\|u_{0}\|_{H^{1}}\lesssim S_{k}(u_{0})$. Therefore, by $(ii)$
of Theorem \ref{localwellposedness}, taking $\delta>0$ sufficiently small gives \eqref{scatteringbound}.

Now for each $\delta>0$, we define the set $S_\delta$ as follows:
\begin{align}\label{Sdelta}
S_\delta=\{u_0\in H^1({\bf R}^{3}):\ \ S_{k}(u_{0})<\delta \ \  and \ \
u_{0}\in{\mathcal{N}^{+}} \Rightarrow \ \eqref{scatteringbound}\ holds\}.
\end{align}
We also define
\begin{align}\label{nc}
n_c=\sup\{\delta:\ \ u_0\in S_\delta\Rightarrow SC(u_0)\ \  holds \}.
\end{align}
Hence, $0<n_{c}\leq n_{0}$. Next we shall prove that $n_{c}<n_{0}$ is impossible, which implies that $n_{c}=n_{0}$.
Thus, we assume now
$$
n_{c}<n_{0}.
$$
By the definition of $n_{c}$, we can find a sequence of solutions $u_{n}$ of ($\rm{NLS_{k}}$) with initial data $\phi_{n}\in {\mathcal{N}^{+}}$
such that $S_{k}(\phi_{n})\rightarrow n_{c}$ and
\begin{align}\label{L5}
\|u_{n}\|_{L_{{\bf R}^{+},x}^{5}}=+\infty\;\;\text{and}\;\;\|u_{n}\|_{L_{{\bf R}^{-},x}^{5}}=+\infty.
\end{align}
In the subsequent subsection, our goal is to prove the existence of critical element $u_{c}\in H^1({\bf R}^{3})$, which is a global solution of
($\rm{NLS_{k}}$) with initial data $u_{c,0}$ such that $S_{k}(u_{c,0})=n_{c}$, $u_{c,0}\in {\mathcal{N}^{+}}$ and $SC(u_{c,0})$ does not hold. Moreover, we prove that
if $\|u_{c}\|_{L_{t,x}^{5}}=+\infty$, then $K:=\{u_c(t): t\in {\bf R}\}$ is precompact in  $H^1({\bf R}^{3})$.

Before showing the existence and compactness of critical element $u_{c}$, we need a lemma related with the linear profile decomposition Lemma \ref{linearprofile}.

\begin{lemma}\label{whole-partial}
Let $M\in {\bf N}$ and $\psi^{j}\in H^{1}({\bf R}^{3})$ for any $0\leq j\leq M$. Suppose that there exist some
$\delta>0$ and $\epsilon>0$ with $2\epsilon<\delta$ such that
\begin{align*}
\sum_{j=0}^{M}S_{k}(\psi^{j})-\epsilon\leq S_{k}\Big(\sum_{j=0}^{M}\psi^{j}\Big)\leq n_{0}-\delta,\:\:\;\;
-\epsilon\leq I_{k}\Big(\sum_{j=0}^{M}\psi^{j}\Big)\leq \sum_{j=0}^{M}I_{k}(\psi^{j})+\epsilon.
\end{align*}
Then $\psi^{j}\in \mathcal{N}^{+}$ for any $0\leq j\leq M$.
\end{lemma}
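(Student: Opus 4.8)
The plan is to run the whole argument with the parameter choice $(a,b)=(3,0)$ in place of $(3,-2)$. By Lemma~\ref{independentofab} the sets $\mathcal{N}_{a,b}^{+}$ do not depend on $(a,b)$, so $\mathcal{N}^{+}=\mathcal{N}_{3,0}^{+}=\{\varphi\in H^{1}({\bf R}^{3}):S_{k}(\varphi)<n_{0},\ I_{k}(\varphi)\geq 0\}$, since $K_{k}^{3,0}=3I_{k}$ by \eqref{functionalI}. Hence it suffices to show, for each $0\leq j\leq M$, that $I_{k}(\psi^{j})\geq 0$ and $S_{k}(\psi^{j})<n_{0}$. The reason for this choice is that the associated functional is $J_{k}^{3,0}(\varphi)=S_{k}(\varphi)-\frac{1}{2}I_{k}(\varphi)=\frac{1}{4}\|\varphi\|_{L^{4}}^{4}$ (a direct computation, also read off from \eqref{positiveJ} with $b=0$), which is manifestly nonnegative, vanishes only at $0$, is nondecreasing along the scaling $\varphi\mapsto\varphi_{\lambda}^{3,0}$ by Remark~\ref{positive-monotonicity}, and gives the convenient splitting $S_{k}(\varphi)=J_{k}^{3,0}(\varphi)+\frac{1}{2}I_{k}(\varphi)$.

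The first preparatory step I would record is the sub-claim: if $\varphi\in H^{1}({\bf R}^{3})\setminus\{0\}$ satisfies $I_{k}(\varphi)\leq 0$, then $J_{k}^{3,0}(\varphi)\geq n_{0}$. This is proved exactly as in Lemma~\ref{j=n0}: when $I_{k}(\varphi)=0$, $\varphi$ is admissible for the infimum defining $n_{k}^{3,0}$, so $S_{k}(\varphi)\geq n_{k}^{3,0}=n_{0}$ by Lemma~\ref{n=n0} and \eqref{threshold4}, while $S_{k}(\varphi)=J_{k}^{3,0}(\varphi)$ because $K_{k}^{3,0}(\varphi)=0$; when $I_{k}(\varphi)<0$, one uses \eqref{positiveK} together with continuity of $\lambda\mapsto K_{k}^{3,0}(\varphi_{\lambda}^{3,0})$ to produce $\lambda_{0}<0$ with $K_{k}^{3,0}(\varphi_{\lambda_{0}}^{3,0})=0$, and then the monotonicity of $J_{k}^{3,0}$ along the scaling yields $J_{k}^{3,0}(\varphi)\geq J_{k}^{3,0}(\varphi_{\lambda_{0}}^{3,0})=S_{k}(\varphi_{\lambda_{0}}^{3,0})\geq n_{0}$.

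Granting the sub-claim, I set $\Psi=\sum_{j=0}^{M}\psi^{j}$ and use $S_{k}(\psi^{j})=J_{k}^{3,0}(\psi^{j})+\frac{1}{2}I_{k}(\psi^{j})$. To get $I_{k}(\psi^{j})\geq 0$ for all $j$ I argue by contradiction: if $I_{k}(\psi^{j_{0}})<0$ for some $j_{0}$, then $J_{k}^{3,0}(\psi^{j_{0}})\geq n_{0}$ by the sub-claim, and since $J_{k}^{3,0}\geq 0$ everywhere, $\sum_{j}J_{k}^{3,0}(\psi^{j})\geq n_{0}$; combining with the hypothesis $\sum_{j}I_{k}(\psi^{j})\geq I_{k}(\Psi)-\epsilon\geq -2\epsilon$ gives $\sum_{j}S_{k}(\psi^{j})\geq n_{0}-\epsilon$, whereas the remaining hypotheses give $\sum_{j}S_{k}(\psi^{j})\leq S_{k}(\Psi)+\epsilon\leq n_{0}-\delta+\epsilon$, so $\delta\leq 2\epsilon$, contradicting $2\epsilon<\delta$. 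Hence every $I_{k}(\psi^{j})\geq 0$, and therefore $S_{k}(\psi^{j})=J_{k}^{3,0}(\psi^{j})+\frac{1}{2}I_{k}(\psi^{j})\geq 0$; the summands being nonnegative, $S_{k}(\psi^{j})\leq\sum_{i}S_{k}(\psi^{i})\leq S_{k}(\Psi)+\epsilon\leq n_{0}-\delta+\epsilon<n_{0}$ (using $\epsilon<\delta$), so $\psi^{j}\in\mathcal{N}_{3,0}^{+}=\mathcal{N}^{+}$ for every $j$.

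I do not anticipate a genuine obstacle: once the reduction to $I_{k}$, $K_{k}^{3,0}$ and the positive companion $J_{k}^{3,0}=\frac{1}{4}\|\cdot\|_{L^{4}}^{4}$ is made — which is the natural move precisely because the Pythagorean-type inequalities in the hypotheses are stated for $S_{k}$ and $I_{k}$ rather than for $P_{k}$ — the proof is just the combination of those inequalities with the sub-claim, and the only care needed is the bookkeeping of the $\epsilon$'s against $\delta$ and the sign conventions on $V$ entering through Remark~\ref{positive-monotonicity}.
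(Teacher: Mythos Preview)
Your proposal is correct and follows essentially the same approach as the paper: both work with the $(a,b)=(3,0)$ scaling, use the sub-claim $I_{k}(\varphi)\leq 0\Rightarrow J_{k}^{3,0}(\varphi)\geq n_{0}$ (which the paper derives inline rather than stating separately), exploit the nonnegativity of $J_{k}^{3,0}$ together with the splitting $S_{k}=J_{k}^{3,0}+\tfrac{1}{2}I_{k}$, and reach the same contradiction $\delta\leq 2\epsilon$. Your bookkeeping is arranged slightly differently (you bound $\sum_{j}S_{k}(\psi^{j})$ from below, while the paper bounds $\sum_{j}J_{k}^{3,0}(\psi^{j})$ from above), but the arguments are the same in substance.
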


\begin{proof}
Suppose that for some $0\leq l\leq M$, $I_{k}(\psi^{l})<0$. By \eqref{positiveK} with $(a, b)=(3,0)$, we have
$I_{k}\Big((\psi^{l})_{\lambda}^{3,0}\Big)>0$ for sufficiently small $\lambda<0$. Thus, by continuity of $I_{k}\Big((\psi^{l})_{\lambda}^{3,0}\Big)$ in $\lambda$,
there exists $\lambda_{2}<0$ such that $I_{k}\Big((\psi^{l})_{\lambda_{2}}^{3,0}\Big)=0$. As $n_{k}^{3.0}=n_{0}$, by the increasing property of
$J_{k}^{3,0}\Big((\psi^{l})_{\lambda}^{3,0}\Big)$ in $\lambda$, we have
$$
J_{k}^{3,0}(\psi^{l})\geq J_{k}^{3,0}\Big((\psi^{l})_{\lambda_{2}}^{3,0}\Big)=S_{k}\Big((\psi^{l})_{\lambda_{2}}^{3,0}\Big)\geq n_{0}.
$$
By the nonnegativity of $J_{k}^{3,0}(\psi^{j})$ for any $0\leq j\leq M$ and $2\epsilon<\delta$, we have
\begin{align*}
n_{0}&\leq J_{k}^{3,0}(\psi^{l})\leq \sum_{j=0}^{M}J_{k}^{3,0}(\psi^{j})=\sum_{j=0}^{M}\Big(S_{k}(\psi^{j})-\frac{1}{2}I_{k}(\psi^{j})\Big)\\
&\leq S_{k}\Big(\sum_{j=0}^{M}\psi^{j}\Big)+\epsilon-\frac{1}{2}I_{k}\Big(\sum_{j=0}^{M}\psi^{j}\Big)+\frac{1}{2}\epsilon\\
&\leq n_{0}-\delta+2\epsilon<n_{0},
\end{align*}
which is impossible. Hence, for each $0\leq j\leq M$, we obtain
$$
I_{k}(\psi^{j})\geq 0.
$$
So
$$
S_{k}(\psi^{j})=J_{k}^{3,0}(\psi^{j})+\frac{1}{2}I_{k}(\psi^{j})\geq 0,
$$
which together with
$$
\sum_{j=0}^{M}S_{k}(\psi^{j})\leq S_{k}\Big(\sum_{j=0}^{M}\psi^{j}\Big)+\epsilon\leq n_{0}-\delta+\epsilon<n_{0}
$$
yields that $S_{k}(\psi^{j})<n_{0}$ for each $0\leq j\leq M$. By Lemma \ref{independentofab}, $\psi^{j}\in \mathcal{N}^{+}$ for any $0\leq j\leq M$.

\end{proof}

\subsection{Existence and compactness of critical element}

\begin{proposition}\label{criticalelement}
There exists a $u_{c,0}$ in $H^1({\bf R}^{3})$ with
$S_{k}(u_{c,0})=n_{c}$, $u_{c,0}\in {\mathcal{N}^{+}}$
such that if $u_c$ is the corresponding global solution of ($\rm{NLS_{k}}$) with
the initial data $u_{c,0}$, then
$\|u_c\|_{L_{t,x}^{5}({\bf R}\times{\bf R}^{3})}=+\infty$ and $K$ is precompact in  $H^1({\bf R}^{3})$.
\end{proposition}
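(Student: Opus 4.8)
The plan is a concentration--compactness/rigidity argument in the style of Kenig--Merle, executed inside the variational set $\mathcal N^+$. Since we are arguing by contradiction under $n_c<n_0$, fix the sequence $\phi_n\in\mathcal N^+$ with $S_k(\phi_n)\to n_c$ and $\|u_n\|_{L^5_{t,x}(\mathbb R^+\times\mathbb R^3)}=\|u_n\|_{L^5_{t,x}(\mathbb R^-\times\mathbb R^3)}=+\infty$ from \eqref{L5}. By Lemma \ref{equivalentSH} the sequence $\{\phi_n\}$ is bounded in $H^1$, so Lemma \ref{linearprofile} applies: after passing to a subsequence, $\phi_n=\sum_{j=1}^M e^{it_n^j H_\alpha}\tau_{x_n^j}\psi^j+W_n^M$ with the orthogonality \eqref{zeroinfty}--\eqref{divergence}, the smallness \eqref{smallness}, and the Pythagorean expansions \eqref{expansions}, \eqref{expansioni}. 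Combining these with $S_k(\phi_n)\to n_c<n_0$, Lemma \ref{whole-partial} (applied to the $n$-dependent data $\psi_n^j$ and $W_n^M$ for large $n$, then letting $n\to\infty$) shows that every profile and the remainder lie in $\mathcal N^+$ for $n$ large, all have nonnegative action, and $\sum_{j=1}^M S_k(\psi_n^j)+S_k(W_n^M)\le n_c+o_n(1)$.

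The core of the proof is to show that exactly one profile survives with the remainder negligible. Suppose not, i.e. either $M\ge2$ with two profiles of action bounded below, or one profile with a non-negligible remainder; then $\limsup_n S_k(\psi_n^j)\le n_c-\varepsilon_0$ for some $\varepsilon_0>0$, and I build a nonlinear profile $v_n^j:=NLS_k(t)\psi_n^j$ according to the scenario for $(t_n^j,x_n^j)$: (a) if $t_n^j\equiv0$, $x_n^j\equiv0$, then $\psi^j$ is an admissible datum with $S_k(\psi^j)<n_c$ and $\psi^j\in\mathcal N^+$, so $SC(\psi^j)$ holds by the definition of $n_c$; (b) if $t_n^j\equiv0$, $|x_n^j|\to\infty$, then since the potential terms in $S_k$ and $P_k$ evaluated on $\tau_{x_n^j}\psi^j$ vanish in the limit (cf.\ \eqref{V01}) one gets $S_0(\psi^j)<n_0$, $P_0(\psi^j)\ge0$, and Lemma \ref{nonlinearprofile} yields a global solution with uniformly bounded $S_\alpha^1$-norm together with its $Z$-norm approximation by a compactly supported function; (c) if $|t_n^j|\to\infty$, the decay Lemma \ref{decay} gives $\tfrac12\|\psi^j\|_{\mathcal H_k^1}^2\le n_0$ in the limit, and $v_n^j$ is produced by the wave-operator theory for $(\mathrm{NLS}_k)$ (Theorem \ref{localwellposedness}(iii)) together with the small-data theory when $x_n^j\equiv0$, or by the $t_n^j\to\pm\infty$ branch of Lemma \ref{nonlinearprofile} when $|x_n^j|\to\infty$. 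In every case $\|v_n^j\|_{L^5_{t,x}}<\infty$ uniformly in $n$. Using \eqref{smallness} for the remainder, I would then set $\tilde u_n:=\sum_{j=1}^M v_n^j+e^{-itH_\alpha}W_n^M$, use the pairwise divergence \eqref{divergence}, Lemma \ref{limit}, Lemma \ref{Strichartzlimitlemma} and Strichartz estimates to check that $\tilde u_n$ solves $(\mathrm{NLS}_k)$ up to an error small in the $N(I)$-norm, and invoke the stability Lemma \ref{stability} to conclude $\|u_n\|_{L^5_{t,x}}\lesssim1$, contradicting \eqref{L5}.

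Hence $M^*=1$. If $|t_n^1|\to\infty$ or $|x_n^1|\to\infty$, the corresponding nonlinear profile is still global with finite $L^5_{t,x}$ norm (by the wave-operator construction, resp.\ Lemma \ref{nonlinearprofile}), so stability would again force $u_n$ to scatter, contradicting \eqref{L5}; therefore $t_n^1\equiv0$ and $x_n^1\equiv0$. Moreover $S_k(\psi^1)\ge n_c$, for otherwise $SC(\psi^1)$ holds and stability gives $u_n$ scattering; combined with the Pythagorean identity this yields $S_k(\psi^1)=n_c$ and $S_k(W_n^1)\to0$, hence $W_n^1\to0$ in $H^1$ by Lemma \ref{equivalentSH} and $\phi_n\to\psi^1=:u_{c,0}$ in $H^1$. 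Then $u_{c,0}\in\mathcal N^+$ and $S_k(u_{c,0})=n_c$; by Theorem \ref{globalvsblowup}(i) the solution $u_c$ is global and stays in $\mathcal N^+$; and the stability lemma propagates $\|u_n\|_{L^5_{t,x}}=+\infty$ into $\|u_c\|_{L^5_{t,x}(\mathbb R\times\mathbb R^3)}=+\infty$. Finally, for precompactness of $K=\{u_c(t):t\in\mathbb R\}$, take any $\{\tau_n\}\subset\mathbb R$; then $u_c(\tau_n)\in\mathcal N^+$, $S_k(u_c(\tau_n))=n_c$, and $u_c(\cdot+\tau_n)$ has infinite $L^5_{t,x}$ norm on $\mathbb R^+$ or on $\mathbb R^-$, so applying Lemma \ref{linearprofile} to $\{u_c(\tau_n)\}$ and repeating the one-profile argument above yields a subsequence along which $u_c(\tau_n)$ converges in $H^1$, which is exactly the claimed precompactness. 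I expect the main obstacle to be the nonlinear-superposition/stability step: matching the four scenarios for $(t_n^j,x_n^j)$ to the correct nonlinear profile — which genuinely requires Lemma \ref{nonlinearprofile} and its $Z$-norm approximation because $(\mathrm{NLS}_k)$ is neither scaling- nor translation-invariant — and controlling the cross terms of the cubic nonlinearity via \eqref{divergence} so that the perturbative error is admissible for Lemma \ref{stability}.
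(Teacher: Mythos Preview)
Your outline follows the paper's proof essentially step for step: same profile decomposition, same use of Lemma~\ref{whole-partial} to place every bubble and remainder in $\mathcal N^+$, same case splitting on $(t_n^j,x_n^j)$ with Lemma~\ref{nonlinearprofile} covering $|x_n^j|\to\infty$, and the same rerun of the argument on $\{u_c(\tau_n)\}$ for precompactness.

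There is one genuine imprecision. In the single-profile situation with $x_n^1\equiv0$ and $t_n^1\to\pm\infty$, you assert that ``the corresponding nonlinear profile is still global with finite $L^5_{t,x}$ norm by the wave-operator construction''. But the wave-operator solution $w$ produced by Theorem~\ref{localwellposedness}(iii) has action
\[
S_k(w)=\lim_{t\to\pm\infty}S_k\big(e^{-itH_\alpha}\psi^1\big)=\tfrac12\|\psi^1\|_{\mathcal H_k^1}^2=\lim_n S_k(\psi_n^1)=n_c,
\]
not strictly below $n_c$, so neither the induction hypothesis (definition of $n_c$) nor small data gives $\|w\|_{L^5_{t,x}(\mathbb R)}<\infty$. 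The paper avoids this by working on a half-line: when $t_n^1\to-\infty$ one has
\[
\|e^{-itH_\alpha}\phi_n\|_{L^5_{\mathbb R^+,x}}\le \|e^{-itH_\alpha}\psi^1\|_{L^5_{(-t_n^1,\infty),x}}+\|e^{-itH_\alpha}W_n^1\|_{L^5_{t,x}}\to0,
\]
so Theorem~\ref{localwellposedness}(ii) yields $\|u_n\|_{L^5_{\mathbb R^+,x}}<\infty$, already contradicting \eqref{L5}. (Equivalently, your wave-operator solution does have finite $L^5$ norm on the half-line determined by the sign of $t_n^1$, and that suffices; you just need to say so.) A related slip: in your multi-profile case (c) with $x_n^j\equiv0$, ``small-data theory'' should read ``the definition of $n_c$'': the paper checks that $NLS_k(t_n^j)\tilde\psi^j$ inherits $S_k\le n_c-\delta$ and membership in $\mathcal N^+$ from $\psi_n^j$ via \eqref{initialdata}, and then invokes the induction hypothesis to get global $L^5_{t,x}$ bounds.
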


\begin{proof}
We first note that $\{\phi_{n}\}_{n=1}^{+\infty}$ be a uniformly bounded sequence in $H^{1}({\bf R}^{3})$. In fact,
since $\phi_{n}\in \mathcal{N}^{+}$ for any $n\in {\bf N}$, by \eqref{equivalenceSH},
$$
\|\phi_{n}\|_{H^{1}}\leq \|\phi_{n}\|_{{\mathcal{H}}_{k}^{1}}^{2}\leq 4S_{k}(\phi_{n})<4n_{0}.
$$
We apply Lemma \ref{linearprofile} to $\phi_{n}$ to get that for each $M\leq M^{*}$,
\begin{align}\label{M}
\phi_{n}=\sum_{j=1}^{M}\psi_{n}^{j}+W_{n}^{M},
\end{align}
\begin{align*}
S_{k}(\phi_{n})=\sum_{j=1}^{M}S_{k}(\psi_{n}^{j})+S_{k}(W_{n}^{M})+o_{n}(1),
\end{align*}
and
\begin{align*}
I_{k}(\phi_{n})=\sum_{j=1}^{M}I_{k}(\psi_{n}^{j})+I_{k}(W_{n}^{M})+o_{n}(1),
\end{align*}
which together with $\phi_{n}\in \mathcal{N}^{+}$ yield that there exist some
$\delta>0$ and $\epsilon>0$ with $2\epsilon<\delta$ such that
\begin{align*}
\sum_{j=1}^{M}S_{k}(\psi_{n}^{j})+S_{k}(W_{n}^{M})-\epsilon\leq S_{k}(\phi_{n})\leq n_{0}-\delta,\:\:\;\;
-\epsilon\leq I_{k}(\phi_{n})\leq \sum_{j=1}^{M}I_{k}(\psi_{n}^{j})+I_{k}(W_{n}^{M})+\epsilon.
\end{align*}
According to Lemma \ref{whole-partial}, we have that for large $n$ and each $1\leq j\leq M$,
$\psi_{n}^{j}$, $W_{n}^{M}\in {\mathcal{N}^{+}}$, and then
\begin{align}\label{M=1}
0\leq \overline{\lim_{n\rightarrow+\infty}}S_{k}(\psi_{n}^{j})
\leq \overline{\lim_{n\rightarrow+\infty}}S_{k}(\phi_{n})=n_{c},
\end{align}
where if equality holds in the last inequality for some $j$, we must have $M^{*}=1$ and $W_{n}^{1}\rightarrow 0$ in $H^{1}$.

We claim that if equality holds in the last inequality of \eqref{M=1} for some $j$ (w.l.g. let $j=1$), $u_{c,0}$ is namely $\psi^{1}$.
Indeed, at this time we have
\begin{align}\label{psi1}
\phi_{n}=\psi_{n}^{1}+W_{n}^{1}=e^{it_{n}^{1}H_{\alpha}}\tau_{x_{n}^{1}}\psi^{1}+W_{n}^{1},
\end{align}
\begin{align*}
\overline{\lim_{n\rightarrow+\infty}}S_{k}(\psi_{n}^{1})=n_{c}
\end{align*}
and
\begin{align}\label{wn1}
W_{n}^{1}\rightarrow 0 \;\;\text{in}\;\; H^{1}.
\end{align}
Our target is to prove that
\begin{align}\label{xt}
x_{n}^{1}\equiv 0\;\;\text{ and}\;\; t_{n}^{1}\equiv 0.
\end{align}
If \eqref{xt} is true, then we have
\begin{align*}
\phi_{n}=\psi^{1}+W_{n}^{1},\;\;
S_{k}(\psi^{1})=n_{c},\;\; \psi^{1}\in\mathcal{N}^{+}
\end{align*}
and
\begin{align*}
\lim_{n\rightarrow+\infty}\|\phi_{n}-\psi^{1}\|_{H^{1}}=0.
\end{align*}
Here $\psi^{1}$ is namely our required $u_{c,0}$. Let $u_c$ be the solution of ($\rm{NLS_{k}}$) with
the initial data $u_{c,0}=\psi^{1}$, then $u_{c}$ is global and $S_{k}(u_{c})=S_{k}(u_{c,0})=n_{c}$.
Using Lemma \ref{stability}, it holds that $\|u_c\|_{L_{t,x}^{5}({\bf R}\times{\bf R}^{3})}=+\infty$. Otherwise,
$\|u_n\|_{L_{t,x}^{5}}<+\infty$, which contradicts with \eqref{L5}.

\noindent If \eqref{xt} is false, then either $|x_{n}^{1}|\rightarrow+\infty$ or $t_{n}^{1}\rightarrow\pm\infty$.
I will see that it leads to  $\|u_n\|_{L_{t,x}^{5}}<+\infty$ or $\|u_{n}\|_{L_{{\bf R}^{\pm},x}^{5}}<+\infty$
contradicting with \eqref{L5}.

\noindent For the case $|x_{n}^{1}|\rightarrow+\infty$, by \eqref{fractionaloperatorlimit}, we have
\begin{align}\label{H1}
\lim_{n\rightarrow +\infty}\|\psi_{n}^{1}\|_{{\mathcal H}_{k}^{1}}=\|\psi^{1}\|_{H^{1}}>0,
\end{align}
which implies that when $t_{n}^{1}\equiv 0$,
$$
S_{0}(\psi^{1})=\overline{\lim_{n\rightarrow+\infty}}S_{k}(\psi_{n}^{1})=n_{c}<n_{0}\;\;\text{and}
\;\;I_{0}(\psi^{1})=\overline{\lim_{n\rightarrow+\infty}}I_{k}(\psi_{n}^{1})\geq 0.
$$
By \eqref{threshold4}, $P_{0}(\psi^{1})\geq 0$. Hence, when $t_{n}^{1}\equiv 0$, $\psi^{1}$ satisfies
the condition \eqref{time0}. When $t_{n}^{1}\rightarrow\pm\infty$, apply \eqref{H1} and \eqref{decay6} to get
$$
\frac{1}{2}\|\psi^{1}\|_{H^{1}}^{2}=\overline{\lim_{n\rightarrow+\infty}}S_{k}(\psi_{n}^{1})=n_{c}<n_{0},
$$
that is, $\psi^{1}$ satisfies the condition \eqref{timeinfty}. Using Theorem \ref{nonlinearprofile} yields that
the solution $NLS_{k}(t)\psi_{n}^{1}$ of ($\rm{NLS_{k}}$) with initial data $\psi_{n}^{1}$ is global and satisfies
$$
\|NLS_{k}(t)\psi_{n}^{1}\|_{S_{\alpha}^{1}(I)}\lesssim_{\|\psi^{1}\|_{H^{1}}}1.
$$
We know that $W_{n}^{1}\rightarrow 0$ in $H^{1}$, which is
\begin{align*}
\lim_{n\rightarrow+\infty}\|\phi_{n}-\psi_{n}^{1}\|_{H^{1}}=0.
\end{align*}
Using Lemma \ref{stability} again, we obtain $\|u_n\|_{L_{t,x}^{5}}<+\infty$.

\noindent For the other case $t_{n}^{1}\rightarrow\pm\infty$, we only cope with $t_{n}^{1}\rightarrow-\infty$ since
$t_{n}^{1}\rightarrow+\infty$ can be dealt with similarly. Apply \eqref{psi1} with $x_{n}^{1}\equiv 0$,
\eqref{wn1}, Strichartz estimates Lemma \ref{Strichartz} and norm equivalence Lemma \ref{Sobolev} to get that
\begin{align*}
\lim_{n\rightarrow+\infty}\|e^{-itH_{\alpha}}\phi_{n}\|_{L_{{\bf R}^{+},x}^{5}}
&\leq \lim_{n\rightarrow+\infty}\|e^{-i(t-t_{n}^{1})H_{\alpha}}\psi^{1}\|_{L_{{\bf R}^{+},x}^{5}}
+\lim_{n\rightarrow+\infty}\|e^{-itH_{\alpha}}W_{n}^{1}\|_{L_{{\bf R}^{+},x}^{5}}\\
&\lesssim \lim_{n\rightarrow+\infty}\|W_{n}^{1}\|_{H^{1}}
+\lim_{n\rightarrow+\infty}\|e^{-itH_{\alpha}}\psi^{1}\|_{L_{(-t_{n}^{1}, +\infty),x}^{5}}=0,
\end{align*}
which immediately implies that $\lim_{n\rightarrow+\infty}\|u_{n}\|_{L_{{\bf R}^{+},x}^{5}}=0$ by $(ii)$ of Theorem
\ref{localwellposedness}. Thus, we obtain that \eqref{xt} holds true.

Next we turn to the other situation that equality  doesn't hold in the last inequality of \eqref{M=1} for any $1\leq j\leq M$.
So for each $1\leq j\leq M$ and $\psi_{n}^{j}\in \mathcal{N}^{+}$, there exists $\delta=\delta_{j}>0$ such that
\begin{align*}
\overline{\lim_{n\rightarrow+\infty}}S_{k}(\psi_{n}^{j})
\leq n_{c}-2\delta,\;\; P_{k}(\psi_{n}^{j})\geq 0\;\;\text{and}\;\;I_{k}(\psi_{n}^{j})\geq 0.
\end{align*}
We shall use $\psi_{n}^{j}$ to constitute approximate solutions of $u_{n}$ under three cases: $|x_{n}^{j}|\rightarrow+\infty$;
$x_{n}^{j}\equiv 0$ and $t_{n}^{j}\equiv 0$; $x_{n}^{j}\equiv 0$ and $t_{n}^{j}\rightarrow\pm\infty$ and then apply Lemma \ref{stability}
to get a contradiction.

For some $j$ such that $|x_{n}^{j}|\rightarrow+\infty$, \eqref{H1} still holds for $\psi_{n}^{j}$. Using the same argument after \eqref{H1},
we obtain that $\psi^{j}$ satisfies \eqref{time0} or \eqref{timeinfty}. Therefore, using Theorem \ref{nonlinearprofile}, we can constitute
a global solution $v_{n}^{j}(t):=NLS_{k}(t)\psi_{n}^{j}$ of ($\rm{NLS_{k}}$) with initial data $\psi_{n}^{j}$ such that
$$
\|v_{n}^{j}\|_{L_{t,x}^{5}}\leq \|NLS_{k}(t)\psi_{n}^{j}\|_{S_{\alpha}^{1}(I)}\lesssim_{\|\psi^{j}\|_{H^{1}}}1.
$$

For some $j$ such that $x_{n}^{j}\equiv 0$ and $t_{n}^{j}\equiv 0$, we apply $\psi^{j}\in \mathcal{N}^{+}$ to constitute
a global solution $v_{n}^{j}(t):=NLS_{k}(t)\psi^{j}$ of ($\rm{NLS_{k}}$) with initial data $\psi^{j}$.

For some $j$ such that $x_{n}^{j}\equiv 0$ and $t_{n}^{j}\rightarrow\pm\infty$, by $(iii)$ of Theorem \ref{localwellposedness}, there exists
$\tilde{\psi}^{j}\in H^{1}$ such that
\begin{align}\label{initialdata}
\|NLS_{k}(t_{n}^{j})\tilde{\psi}^{j}-e^{it_{n}^{j}H_{\alpha}}\psi^{j}\|_{{\mathcal{H}}_{k}^{1}}
\sim \|NLS_{k}(t_{n}^{j})\tilde{\psi}^{j}-e^{it_{n}^{j}H_{\alpha}}\psi^{j}\|_{H^{1}}\rightarrow 0\;\;\text{as}\;\;n\rightarrow+\infty,
\end{align}
which implies that for each $1\leq j\leq M$ and $n$ large enough,
\begin{align*}
S_{k}\Big(NLS_{k}(t_{n}^{j})\tilde{\psi}^{j}\Big)
\leq n_{c}-\delta,\;\; P_{k}\Big(NLS_{k}(t_{n}^{j})\tilde{\psi}^{j}\Big)\geq 0\;\;\text{and}\;\;I_{k}\Big(NLS_{k}(t_{n}^{j})\tilde{\psi}^{j}\Big)\geq 0.
\end{align*}
We set $v_{n}^{j}(0)=NLS_{k}(t_{n}^{j})\tilde{\psi}^{j}$. Then according to the definition of $n_{c}$ and $v_{n}^{j}(0)\in\mathcal{N}^{+} $,
we obtain that the solution $v_{n}^{j}(t):=NLS_{k}(t+t_{n}^{j})\tilde{\psi}^{j}$ of ($\rm{NLS_{k}}$) with initial data $v_{n}^{j}(0)$ is global
and satisfies uniform space-time bounds: $\|v_{n}^{j}\|_{L_{t,x}^{5}}<+\infty$.

As a result, we can construct approximate solutions of ($\rm{NLS_{k}}$):
$$
\tilde{u}_{n}(t):=\sum_{j=1}^{M}v_{n}^{j}+e^{-itH_{\alpha}}W_{n}^{M}
$$
and set
$$
e:=(i\partial_{t}-H_{\alpha})\tilde{u}_{n}+|\tilde{u}_{n}|^{2}\tilde{u}_{n}.
$$
By \eqref{M} and \eqref{initialdata}, we have
\begin{align}\label{approximate1}
\|\phi_{n}-\tilde{u}_{n}(0)\|_{H^{1}}=\|u_{n}(0)-\tilde{u}_{n}(0)\|_{H^{1}}\rightarrow 0\;\;\text{as}\;\;n\rightarrow+\infty,
\end{align}
which implies that
\begin{align}\label{approximate2}
\overline{\lim}_{n\rightarrow+\infty}\|\tilde{u}_{n}(0)\|_{H^{1}}\;\;\text{ has a uniform bound independent of}\;\; M.
\end{align}
Using the same argument of Lemma 7.3 in \cite{KMVZ} and replacing $H_{\alpha}$ and homogeneous fractional operator (e.g.,
$|\nabla|^{\frac{1}{2}}$) with $1+H_{\alpha}$ and inhomogeneous fractional operator (e.g., $(1+\Delta)^{\frac{1}{4}}$), respectively,
we also obtain the same results as there for $\tilde{u}_{n}(t)$, that is,
\begin{align}\label{approximate3}
\overline{\lim}_{n\rightarrow+\infty}\|\tilde{u}_{n}\|_{L_{t,x}^{5}} \;\;\text{has a uniform bound independent of }\;\;M
\end{align}
and
\begin{align}\label{approximate4}
\lim_{M\rightarrow M^{*}}\overline{\lim}_{n\rightarrow+\infty}
\|(1+\Delta)^{\frac{1}{4}}e\|_{L_{t,x}^{\frac{10}{7}}}=0
\end{align}
Applying \eqref{approximate1}-\eqref{approximate4} to Theorem \ref{stability} gives
$\|u_n\|_{L_{t,x}^{5}}<+\infty$, which is a contradiction with \eqref{L5}. Thus, we have completed the proof of existence of critical
element $u_{c}$.

Finally, we consider precompactness of $K$ in $H^{1}$. We recall that $u_{c}$ satisfies the following properties:
\begin{align*}
S_{k}(u_{c}(t))=n_{c}\;\;,\;\; u_{c}(t)\in {\mathcal{N}^{+}}\;\;\text{ for}\;\; \forall t\in {\bf R}\;\;\text{ and}\;\;
\|u_c\|_{L_{t,x}^{5}}=+\infty.
\end{align*}
In particular, for any time sequence $\{t_{n}\}_{n=1}^{+\infty}$, the sequence $\{u_{c}(t_{n})\}_{n=1}^{+\infty}$ also satisfies
\begin{align*}
S_{k}(u_{c}(t_{n}))=n_{c}\;\;,\;\; u_{c}(t_{n})\in {\mathcal{N}^{+}}\;\;\text{ for}\;\; \forall t\in {\bf R}\;\;\text{ and}\;\;
\|u_c\|_{L_{(-\infty, t_{n}),x}^{5}}=\|u_c\|_{L_{( t_{n}, +\infty),x}^{5}}=+\infty.
\end{align*}
Hence, regarding $u_{c}(t_{n})$ as the foregoing $\phi_{n}$ and noting that the fact $\phi_{n}$ converges $\psi^{1}$ in $H^{1}$ yieds
that $K$ is precompact in $H^{1}$. Thus, we complete the whole proof.
\end{proof}

\subsection{Precluding the critical element}

In this subsection, we shall apply the localized Virial identities \eqref{I'} and \eqref{I''} to preclude the critical element $u_{c}$.
First by the precompactness of $K$, we have uniform localization of $u_{c}$: For each
$\epsilon>0,$ there exists $R=R(\epsilon)>0$ independent of $t$ such that
\begin{align}\label{localization}
\int_{|x|>R}\Big(|\nabla u_{c}(t,x)|^2+|u_{c}(t,x)|^2+|u_{c}(t,x)|^4\Big)dx\leq\epsilon.
\end{align}
We next claim that there exists a constant $c$ such that for any $t\in {\bf R}$,
\begin{align}\label{control}
\|\nabla u_{c}(t)\|_{L^{2}}\geq c\|u_{c}(t)\|_{L^{2}}.
\end{align}
Indeed, if it's false, then there exists a time sequence $\{t_{n}\}_{n=1}^{+\infty}$ such that
$$
\|\nabla u_{c}(t_{n})\|_{L^{2}}\leq \frac{1}{n}\|u_{c}(t_{n})\|_{L^{2}}=\frac{1}{n}\|u_{c}(0)\|_{L^{2}},
$$
which means that $u_{c}(t_{n}\rightarrow 0$ in $\dot{H}^{1}$. However, $\{u_{c}(t_{n})\}_{n=1}^{+\infty}$ is precompact in $H^{1}$.
Hence, there exists a subsequence (still denoted by itself) $u_{c}(t_{n})\rightarrow 0$ in $H^{1}$. As $u_{c}(t_{n})\in {\mathcal{N}^{+}}$,
by \eqref{equivalenceSH}, $n_{c}=\lim_{n\rightarrow+\infty}S_{k}(u_{c}(t_{n}))=0$, which is impossible.

Now we use localized Virial identities \eqref{I'} and \eqref{I''} only with $u_{c}$ in place of $u$ again, where we still choose the radial function $\phi$ satisfying
\eqref{radialfunction}. For $R_{1}$, $R_{2}$, $R_{3}$ and $R_{4}$ in \eqref{I''}, by \eqref{localization}, we have
\begin{align}\label{importanterror}
|R_{1}+R_{2}+R_{3}+R_{4}|\lesssim &\displaystyle\int_{|x|\geq R}\Big(|\nabla u_{c}(t,x)|^{2}+|u_{c}(t,x)|^{4}+\frac{1}{R^{2}}|u_{c}(t,x)|^{2}
+\frac{1}{R^{\alpha}}|u_{c}(t,x)|^{2}\Big)dx\nonumber\\
&\rightarrow 0\;\;\text{as}\;\; R\rightarrow+\infty.
\end{align}
And for $P_{k}(u_{c})$ in \eqref{I''}, by \eqref{lowerbound}, $x\cdot \nabla V\leq 0$, \eqref{control} and \eqref{equivalenceSH}, we have
\begin{align}\label{importantlowerbound}
P_{k}(u_{c}(t))&\geq \min\Big\{4\Big(n_{0}-S_{k}(u_{c}(t))\Big),
\frac{2}{5}\Big(\|\nabla u_{c}(t)\|_{L^{2}}^{2}-\frac{1}{2}\displaystyle\int_{{\bf R}^{3}}(x\cdot\nabla V) |u_{c}(t)|^{2}dx)\Big)\Big\}\nonumber\\
&\gtrsim \|\nabla u_{c}(t)\|_{L^{2}}^{2}\gtrsim \|u_{c}(t)\|_{H^{1}}^{2}\gtrsim S_{k}(u_{c}(t))=n_{c}.
\end{align}
It follows from \eqref {importanterror} and \eqref {importantlowerbound} that there exists $\delta_{0}>0$ such that for $R$ large enough,
$$
I''(t)\geq \delta_{0},
$$
which means that $\lim_{t\rightarrow+\infty}I'(t)=+\infty$. However, it is impossible since $I'(t)$ is bounded. In fact,
by \eqref{I'},
$$
|I'(t)|\lesssim R.
$$
Thus, we complete the proof of scattering part of Theorem \ref{scattering1}.

\end{document}